\newtheorem{theorem}{Theorem}[section]
\newtheorem{lemma}[theorem]{Lemma}
\newtheorem{proposition}[theorem]{Proposition}
\newtheorem{corollary}[theorem]{Corollary}
\theoremstyle{definition}
\newtheorem{remark}[theorem]{Remark}
\newtheorem{example}[theorem]{Example}
\numberwithin{equation}{section}
\begin{document}

\title[Bifurcation of closed orbits of Hamiltonian systems]{Bifurcation of closed orbits of Hamiltonian systems \\with application to geodesics of the Schwarzschild metric}

\author[A.~Boscaggin]{Alberto Boscaggin}

\address{
Department of Mathematics ``Giuseppe Peano'', University of Torino\\
Via Carlo Alberto 10, 10123 Torino, Italy}

\email{alberto.boscaggin@unito.it}

\author[W.~Dambrosio]{Walter Dambrosio}

\address{
Department of Mathematics ``Giuseppe Peano'', University of Torino\\
Via Carlo Alberto 10, 10123 Torino, Italy}

\email{walter.dambrosio@unito.it}

\author[G.~Feltrin]{Guglielmo Feltrin}

\address{
Department of Mathematics, Computer Science and Physics, University of Udine\\
Via delle Scienze 206, 33100 Udine, Italy}

\email{guglielmo.feltrin@uniud.it}

\thanks{Work written under the auspices of the Grup\-po Na\-zio\-na\-le per l'Anali\-si Ma\-te\-ma\-ti\-ca, la Pro\-ba\-bi\-li\-t\`{a} e le lo\-ro Appli\-ca\-zio\-ni (GNAMPA) of the Isti\-tu\-to Na\-zio\-na\-le di Al\-ta Ma\-te\-ma\-ti\-ca (INdAM). The first and third authors are supported by the INdAM-GNAMPA Project 2023 ``Analisi qualitativa di problemi differenziali nonlineari'' and PRIN 2022 ``Pattern formation in nonlinear phenomena''.
\\
\textbf{Preprint -- October 2023}} 

\subjclass{34C25, 70H08, 70H12, 83A05, 83C10.}

\keywords{Nearly integrable Hamiltonian systems, closed orbits, apsidal angle, time-map, central force problems, Schwarzschild metric, Poincar\'e--Birkhoff theorem.}

\date{}

\dedicatory{}

\begin{abstract}
We investigate bifurcation of closed orbits with a fixed energy level for a class of nearly integrable Hamiltonian systems with two degrees of freedom. More precisely, we make a joint use of Moser invariant curve theorem and Poincar\'e--Birkhoff fixed point theorem to prove that a periodic non-degenerate invariant torus $\mathcal{T}$ of the unperturbed problem gives rise to infinitely many closed orbits, bifurcating from a family of tori accumulating onto $\mathcal{T}$. The required non-degeneracy condition, which is nothing but a reformulation of the usual non-degeneracy condition in the isoenergetic KAM theory, is expressed in terms of the derivative of the apsidal angle with respect to the angular momentum: in this way, tools from the theory of time-maps of nonlinear oscillators can be used to verify it in concrete problems.
Applications are given to perturbations of central force problems in the plane, and to equatorial geodesic dynamics for perturbations of the Schwarzschild metric.
\end{abstract}

\maketitle
	
\section{Introduction}\label{section-1}

In classical mechanics, the motion of a small point mass around a fixed massive body is modeled by the Kepler problem.
Due to its peculiar degeneracies, using tools of Hamiltonian perturbation theory to investigate perturbed Kepler dynamics is typically quite hard. However, as shown in \cite{BoDaFe-21,BDF4}, this superintegrability is broken if some relativistic approximations of the Kepler equation are considered.

In the context of general relativity, the analogue of the Kepler problem consists in the study of geodesic dynamics of the Schwarzschild metric.
As far as we known, the idea of using tools of Hamiltonian perturbation theory to investigate perturbations of the Schwarzschild metric was first developed in the interesting papers \cite{Xue-pp,Xue-21} (see also \cite{Mo-92} for a proof of chaos, using Melnikov method); however, 
a complete analytic proof of the required non-degeneracy is not given in \cite{Xue-21}. 

Motivated by these contributions, in this paper we take advantage of a time-map reformulation of the non-degeneracy condition to rigorously prove bifurcation of equatorial timelike closed geodesics for a stationary and equatorial perturbation of the Schwarzschild metric. 

As a further application of our general theory, we can also deal with bifurcation of closed orbits, with fixed energy, for perturbations of central force problems in the plane.

\subsection{State of the art}

According to Poincar\'e \cite{Po-1892}, investigating the behavior of Hamiltonian systems of the type
\begin{equation}\label{eq-hsintro}
\begin{cases}
\, \dot{\varphi}=\nabla \mathcal{K}_0 (I)+\varepsilon \, \nabla_I \widetilde{\mathcal{K}}(\varphi,I;\varepsilon),
\vspace{2pt}\\
\, \dot{I}=-\varepsilon \, \nabla_\varphi \widetilde{\mathcal{K}}(\varphi,I;\varepsilon), 
\end{cases}
\end{equation}
where $\varphi \in \mathbb{T}^N$, $I \in D \subset \mathbb{R}^N$ (the variable $\varphi$ is typically named angle, while the variable $I$ is the action) and $\varepsilon$ is a small real parameter, represents \emph{le probl\`eme g\'en\'eral de la dynamique}. A system of the above form is usually called nearly integrable, since the unperturbed problem can be explicitly solved: indeed, system \eqref{eq-hsintro} for $\varepsilon = 0$ reads as
\begin{equation}\label{eq-hsintro0}
\begin{cases}
\, \dot{\varphi}=\nabla \mathcal{K}_0 (I),
\vspace{2pt}\\
\, \dot{I}=0, 
\end{cases}
\end{equation}
so that each value $I^* \in D$ of the action gives rise to an invariant torus $\mathbb{T}^N \times \{I^*\}$ on which the dynamics of \eqref{eq-hsintro0} is completely described by the frequency vector $\omega^* = \nabla \mathcal{K}_0(I^*)$.

By far, the most remarkable property of the nearly integrable system \eqref{eq-hsintro} is the preservation, for $\varepsilon \neq 0$ and small, of many of such invariant tori, precisely the ones associated with a frequency vector $\omega^*$ satisfying a Diophantine condition: on these tori, the dynamics of \eqref{eq-hsintro} is quasi-periodic with highly non-resonant frequency. Results of this type, which are nowadays collected under the name of KAM theory (see for instance \cite{Du-14} for a friendly introduction to this wide topic), also require (besides quite severe conditions on the differentiability of the Hamiltonian vector field) a non-degeneracy condition for the unperturbed Hamiltonian $\mathcal{K}_0$, which typically reads as
\begin{equation}\label{eq-nondegintro}
\mathrm{det} \left( \nabla^2 \mathcal{K}_0(I^*) \right) \neq 0,
\end{equation} 
where the symbol $\nabla^2$ stands for the Hessian matrix, or as
\begin{equation}\label{eq-nondegintro2}
\mathrm{det} \,
\begin{pmatrix}
\nabla^2 \mathcal{K}_0(I^*) & \nabla \mathcal{K}_0(I^*)^{\intercal} 
\vspace{3pt} \\
\nabla \mathcal{K}_0(I^*) & 0
\end{pmatrix} 
\neq 0,
\end{equation}
if the perturbed tori are required to have a fixed energy level, independent from $\varepsilon$, see \cite[Appendix~8]{Ar-89} as well as \cite{BrHu-91} and the references therein.

On the opposite side of quasi-periodic tori with Diophantine frequency vector are tori carrying periodic solutions to system \eqref{eq-hsintro}: as well known, such tori generically disappear (i.e., they disintegrate) as soon as the parameter $\varepsilon$ is non-zero. In spite of this, traces of the unperturbed dynamics can still be found, for $\varepsilon$ small, in the form of surviving periodic orbits: results in this spirit, which typically provide a lower bound for the number of such periodic orbits in terms of a topological invariant of the torus, such as its Lusternik--Schnirelmann category, are usually called of bifurcation-type. They were obtained, in slightly different context and level of generality, in \cite{AmCoEk-87,BeKa-87,Bo-80,Chen-PhD,We-73,We-78}, all the corresponding proofs relying, ultimately, on some arguments of variational nature. More recently, in \cite{FoGaGi-16}, a result on this line was established using a higher dimensional version of the Poincar\'e--Birkhoff fixed point theorem previously proved, again via variational methods, in \cite{FoUr-17}. In all these bifurcation results, non-degeneracy conditions for the unperturbed Hamiltonian, again on the lines of \eqref{eq-nondegintro} or \eqref{eq-nondegintro2}, continue to play a crucial role.

In our recent paper \cite{BDF2}, we dealt with bifurcation of periodic solutions when the unperturbed system is a 
(isotropic) central force problem in the plane, namely
\begin{equation}\label{eq-cenintro}
\ddot x = V'(\vert x \vert) \frac{x}{\vert x \vert}, \quad x \in \mathbb{R}^2 \setminus \{0\}.
\end{equation}
As very well known, the above equation possesses two obvious first integrals, namely the energy $H = \tfrac{1}{2} \vert \dot x \vert^2 - V(\vert x \vert)$ and the angular momentum $L = x \wedge \dot x$, corresponding to the invariance of \eqref{eq-cenintro} for time-translations and plane-rotations, respectively. For a pair $(H,L)$ giving rise to non-rectilinear and non-circular periodic solutions, a symplectic change of variables $(x,\dot x) \mapsto (\varphi,I)$ can be constructed, transforming, locally, the Hamiltonian system naturally associated with \eqref{eq-cenintro} into a system of the form \eqref{eq-hsintro0}. For a concrete choice of the potential $V$, the non-degeneracy condition \eqref{eq-nondegintro}
is typically quite hard to be checked, since an explicit expression of the Hamiltonian $\mathcal{K}_0$ is not available; however, in \cite{BDF2} we proved that \eqref{eq-nondegintro} can be equivalently expressed in terms of two maps $T(H,L)$ and $\Theta(H,L)$ depending on the radial and angular dynamics of \eqref{eq-cenintro}. More precisely, let
\begin{equation}\label{eq-cfh0}
\mathcal{H}_0(r,\vartheta,p_r,p_\vartheta)=\displaystyle \dfrac{1}{2}\, \left(p_{r}^2+\dfrac{1}{r^2} \, p_{\vartheta}^2\right) - V(r)
\end{equation}
be the Hamiltonian associated with \eqref{eq-cenintro} in polar coordinates $x = r e^{i\vartheta}$. Then $p_r = \dot r$ and $p_\vartheta =  r^2 \dot \theta = L$, so that the radial dynamics is governed by the oscillator
\begin{equation*}
\dfrac{1}{2}\, \dot{r}^2+\dfrac{L^2}{2r^2}-V(r)=H.
\end{equation*}
With this in mind, and assuming that the above equation determines a closed orbit in the $(r,\dot r)$-plane, $T(H,L)$ and $\Theta(H,L)$ are defined respectively as
\begin{equation*}
T(H,L) = \sqrt{2}\, \int_{r_-(H,L)}^{r_+(H,L)} \dfrac{\mathrm{d}r}{\sqrt{H - L^2/2r^2 + V(r)}},
\end{equation*}
and 
\begin{equation*}
\Theta(H,L) = \sqrt{2}L \, \int_{r_-(H,L)}^{r_+(H,L)} \dfrac{\mathrm{d}r}{r^2\sqrt{H - L^2/2r^2 + V(r)}},
\end{equation*}
where $(r^{\pm}(H,L),0)$ are the intersection points of the orbit with the axis $\dot r = 0$. Such quantities represent, respectively, the period of the orbit in the $(r,\dot r)$-plane and the angular variation in the radial period (that is, the so-called apsidal angle) and they allow to express the non-degeneracy \eqref{eq-nondegintro} as
\begin{equation*}
\partial_H T(H^*,L^*) \partial_L \Theta(H^*,L^*) - \partial_L T(H^*,L^*) \partial_H \Theta(H^*,L^*) \neq 0,
\end{equation*}
where $(H^*,L^*)$ is the energy/angular momentum pair corresponding to the action value $I^*$ via the change of coordinates.
The advantage of this reformulation is that both $T$ and $\Theta$ can be regarded as time-maps of nonlinear oscillators (for the map $T$ this is direct, while for the map $\Theta$ a preliminary trick is needed, cf.~\cite{Ro-18}): using this fact, in \cite{BDF2} we showed that the non-degeneracy condition is satisfied for various potentials of physical interest, including the homogeneous potential $V(r)=\kappa/r^{\alpha}$ for $\kappa > 0$ and $\alpha\in(-\infty,2)\setminus\{-2,0,1\}$. By the use of the abstract bifurcation result in \cite{FoGaGi-16}, we then dealt with non-autonomous perturbed problems of the type
\begin{equation*}
\ddot x = V'(\vert x \vert) \frac{x}{\vert x \vert} + \varepsilon \, \nabla_x U(t,x),
\end{equation*}
where $U$ is an external potential $T$-periodically depending on the time variable $t$ (for a fixed $T > 0$), showing the existence
of $T$-periodic solutions bifurcating from tori filled by non-circular $T$-periodic solutions of the unperturbed problem. 
In particular, an application to a planar restricted 3-body problem with non-Newtonian interactions was obtained.

\subsection{Our contribution and plan of the paper}

In the present paper, we deal instead with autonomous Hamiltonian systems, looking for bifurcating periodic solutions at a fixed energy
level (independent from the parameter $\varepsilon$). 
More precisely, we consider perturbed Hamiltonians of the type
\begin{equation}\label{eq-heintro}
\mathcal{H}_{\varepsilon}(r,\vartheta,p_r,p_\vartheta) = \mathcal{H}_0(r,\vartheta,p_r,p_\vartheta) + \varepsilon \widetilde{\mathcal{H}} (r,\vartheta,p_r,p_\vartheta;\varepsilon),
\end{equation}
where $(r,\vartheta,p_r,p_\vartheta) \in \Xi \times \mathbb{T}^1 \times \mathbb{R}^2$, with $\Xi \subset (0,+\infty)$ an interval, and the unperturbed Hamiltonian is of the form 
\begin{equation} \label{eq-hamunpertastratta0}
\mathcal{H}_0(r,\vartheta,p_r,p_\vartheta)=\displaystyle \dfrac{1}{2}\, \left(\alpha(r)\, p_{r}^2+\dfrac{1}{r^2} \, p_{\vartheta}^2\right) - V(r),
\end{equation}
where $\alpha$ is a positive function and $V$ is a given potential, 
and we look for non-constant periodic solutions (i.e., closed orbits\footnote{In the literature, there is no an univocal definition of the term \emph{closed orbit}: often by this it is meant the image of a (non-constant) periodic solution, but sometimes the term is used more freely. Throughout the paper, we use closed orbit as a synonymous of non-constant periodic solution: we think however that the term orbit is useful to emphasize that we are dealing with autonomous dynamical systems.}) of the associated Hamiltonian system satisfying the energy condition
\begin{equation*}
\mathcal{H}_{\varepsilon}(r(t),\vartheta(t),p_r(t),p_\vartheta(t)) \equiv H^*, \quad \mbox{ for every } \varepsilon,
\end{equation*}
where $H^*$ is the energy of the solutions lying on the unperturbed torus from which bifurcation occurs.
Let us notice that unperturbed Hamiltonians of the form \eqref{eq-hamunpertastratta0} include in particular, for $\alpha \equiv 1$, 
the Hamiltonian \eqref{eq-cfh0} coming from central force problems, so that our result will apply to autonomous perturbed problems of the type
\begin{equation*}
\ddot x = V'(\vert x \vert) \frac{x}{\vert x \vert} + \varepsilon \, \nabla_x U(x),
\end{equation*}
where however, compared to \cite{BDF2}, bifurcating solutions preserve the energy rather than the period. Moreover, as discussed in detail below in the introduction, the class \eqref{eq-hamunpertastratta0} also allows us to deal with equatorial geodesics for the Schwarzschild metric of general relativity, which as already mentioned is the main motivation and application of the results in this paper.

In Section \ref{section-2}, as a preliminary step for our main investigation, we first provide 
a general result for bifurcation of closed orbits for nearly integrable Hamiltonian systems like \eqref{eq-hsintro} in the special case of two degrees of freedom. As recently shown in \cite{BDF4}, a result of this type can be obtained, whenever the non-degeneracy condition \eqref{eq-nondegintro2} holds true, by using an abstract bifurcation theory from periodic manifolds developed by Weinstein in a series of papers \cite{We-73,We-78} (this application was suggested by Weinstein himself, though, to the best of our knowledge, not rigorously proved).
Differently from \cite{BDF4}, here our proof is of purely dynamical nature and it goes as follows. At first, via an energetic reduction procedure (cf.~\cite[Section~45]{Ar-89}), the existence of periodic solutions to \eqref{eq-hsintro} with a fixed energy level
is converted, for $\varepsilon$ small, into the existence of periodic solutions for a non-autonomous Hamiltonian system with $1$ degree of freedom and, hence, to the existence of periodic points for an exact symplectic map $M_\varepsilon$ defined on a planar annulus. 
Then, we show that the non-degeneracy condition \eqref{eq-nondegintro2} implies a local twist condition for the map $M_\varepsilon$, so that, basically, the planar version of the Poincar\'e--Birkhoff theorem applies, providing the desired periodic points. 
Actually, up to a more stringent regularity assumption on the Hamiltonian, a joint use of KAM theory (precisely, Moser invariant curve theorem) and Poincar\'e--Birkhoff theorem can be made: in this way, for the same value of $\varepsilon$, an infinite number of closed orbits, bifurcating from the family of periodic tori closed enough to the one associated with action $I^*$ and with the same energy level,
can be obtained, similarly to a Birkhoff--Lewis scenario \cite{BiLe-34}. This provides a substantial improvement of the result in \cite{BDF4}, where bifurcation from a single torus was considered. See also Remark~\ref{remPBclassico} and Remark~\ref{rem1} for more comments.

In Section \ref{section-main}, we focus on the case of Hamiltonians of the form \eqref{eq-heintro}, with $\mathcal{H}_0$ as in \eqref{eq-hamunpertastratta0}, with the main aim of reformulating the non-degeneracy condition \eqref{eq-nondegintro2} in terms of time-maps, similarly to what was done in \cite{BDF2} for the condition \eqref{eq-nondegintro}. 
Actually, it turns out that only the apsidal angle $\Theta$, here defined as 
\begin{equation*}
\Theta(H,L) = \sqrt{2}L \, \int_{r_-(H,L)}^{r_+(H,L)} \dfrac{\mathrm{d}r}{r^2\sqrt{(H - L^2/2r^2 + V(r))\alpha(r)}},
\end{equation*}
plays a role: precisely, condition \eqref{eq-nondegintro2} can be written simply as
\begin{equation}\label{eq-nondegintrotheta}
\partial_L \Theta(H^*,L^*) \neq 0,
\end{equation}
where, again, $(H^*,L^*)$ is the energy/angular momentum pair corresponding to the action value $I^*$ via the change of coordinates.
The corresponding bifurcation result is precisely stated and proved in Theorem~\ref{teo-main}.

Taking advantage of some computations and considerations already developed in \cite{BDF2}, two immediate applications of this result to perturbations of central force problem are then given. The first one deal with perturbations of the Levi-Civita
equation 
\begin{equation*}
\ddot x = -\kappa \dfrac{x}{|x|^3} - 2\lambda \dfrac{x}{|x|^4}, \qquad \kappa,\lambda > 0,
\end{equation*}
proposed in \cite{LeCi-28} as a relativistic correction for the Kepler problem (see~\cite{BDF4} and the references therein): here the map $\Theta$ can be easily computed and condition \eqref{eq-nondegintrotheta} is directly checked to hold. 
The second application is, instead, for perturbations of the homogeneous equation
\begin{equation*}
\ddot x = -\kappa \dfrac{x}{|x|^{\alpha + 2}}, \qquad \kappa > 0, \; \alpha < 2,
\end{equation*}
where, we recall, the condition $\alpha < 2$ is needed since it is a necessary and sufficient one for the existence of non-circular closed orbits, cf.~\cite{AmCo-93,BDF2}. In this case, the map $\Theta(H,L)$ does not admit an explicit expression for a general value of $\alpha$, but, taking advantage of tools from the theory of time-maps, one can still prove that the non-degeneracy condition  \eqref{eq-nondegintrotheta} is satisfied for every $\alpha\in(-\infty,2)\setminus\{-2,1\}$. Incidentally, let us notice that the excluded cases $\alpha = 1$ and $\alpha = -2$ correspond, respectively, to the harmonic oscillator and to the Kepler problem: as well known, by Bertrand's theorem, these are the only two central force problems for which all bounded orbits are closed, and in both the cases the map $\Theta$ is actually constant, cf.~\cite{OrRo-19}.

In Section~\ref{section-sch} we finally present our main application, dealing with the geodesic dynamics of perturbed Schwarzschild problems.
The Schwarzschild problem is the relativistic analogue of the Kepler problem, and it consists in the study of the motion of a small point mass around a spherically symmetric blackhole, see \cite{MTW-73,Sc-84}. According to the principles of general relativity, such a motion is described by a so-called worldline in spacetime, that is a path $t \mapsto (\tau(t),z(t)) \in \mathbb{R}\times\mathbb{R}^3$, where $z \in \mathbb{R}^3$ is the position, $\tau$ is the coordinate time (that is, the time measured by a stationary clock at infinity) and $t$ is the proper time of the particle (that is, the time measured by a clock carried along with
the particle); moreover, this worldline is obtained as a timelike geodesic of a suitable Lorentzian metric defined on the spacetime.
Precisely, in spherical coordinates and geometrized units ($G=c=1$), the Schwarzschild metric $g$ is given by
\begin{equation*}
\mathrm{d}s^{2} = -\alpha(r)\, \mathrm{d}\tau^2+\alpha(r)^{-1}\, \mathrm{d}r^2+r^2\, \mathrm{d}\theta^2+r^2\sin^2\theta \, \mathrm{d}\phi^2,
\end{equation*}
where $r$ is the polar radius, $\theta$ is the inclination angle (colatitude) and $\phi$ is the azimuthal angle, and 
\begin{equation*}
\alpha(r)=1-\dfrac{2M}{r},
\end{equation*}
with $M>0$ the mass of the blackhole. 

We are interested in equatorial timelike closed geodesics for such a metric: a Hamiltonian formulation for this problem can be obtained as follows. Closed geodesics with respect to the metric are, by definition, non-constant periodic solutions of the Lagrangian system associated with the Lagrangian $L_0(q,\dot q) = \tfrac{1}{2} g_{\mu \nu} \dot{q}^\mu \dot{q}^\nu$, 
with $q = (\tau,r,\theta,\phi)$, and so by Legendre transformation the corresponding Hamiltonian is given by
\begin{equation*}
H_0 (q,p)= \dfrac{1}{2} \left(-\alpha(r)^{-1}\, p_{\tau}^2+\alpha(r)\, p_{r}^2+\dfrac{1}{r^2} \, p_{\theta}^2+\dfrac{1}{r^2\sin^2 \theta} \, p_{\phi}^2\right),
\end{equation*}
where $p=(p_{\tau},p_r,p_\theta,p_\phi)$ are the canonical momenta. Since such a Hamiltonian does not depend on $\tau$, we immediately see that $p_\tau = E$ is a first integral (the so-called particle energy); moreover, it is easy to check that the equatorial plane $\theta = \pi/2$ is invariant for the dynamics. Hence, the reduced Hamiltonian describing the equatorial geodesic dynamics is given by
\begin{equation*}
{\mathcal{H}_0} (r,\phi,p_r,p_\phi)= \dfrac{1}{2} \left(\alpha(r)\, p_{r}^2+\dfrac{1}{r^2} \, p_{\phi}^2\right)-\dfrac{1}{2}\, \dfrac{E^2}{\alpha(r)},
\end{equation*}
which is clearly of the form \eqref{eq-hamunpertastratta0} for $V(r) = E^2/2\alpha(r)$, with $r > 2M$, i.e., outside the so-called event horizon. Equatorial geodesic which are timelike, in particular, are found by imposing the isoenergetic condition
\begin{equation*}
\mathcal{H}_0 (r,\phi,p_r,p_\phi)=-\dfrac{1}{2}.
\end{equation*}
In the above framework, we first prove that for a (quite small!) interval of values for the first integral $E$, precisely $E \in (\sqrt{25/27},1)$, 
equatorial, timelike, bounded (i.e., radially closed) geodesics actually exist, for $L \in (L_E,4M)$, with $L_E > 0$ a suitable value. Unfortunately, however, the apsidal angle $\Theta$ does not admit an explicit expression, so that investigating the validity of the non-degeneracy condition
\begin{equation}\label{eq-nondeglast}
\partial_L \Theta\left(-\frac{1}{2},L^*\right) \neq 0,
\end{equation}
requires more work. Briefly, our strategy to tackle the problem is as follows. 
First, we use quite delicate arguments of the theory of time-maps to prove that
\begin{equation*}
\lim_{L \to (L_E)^-}\partial_L \Theta\left(-\frac{1}{2},L\right) \neq 0;
\end{equation*}
then, we use the analyticity of the map $\Theta$ to infer that the set of values $L^* \in (L_E,4M)$ for which there is an equatorial timelike closed geodesics satisfying \eqref{eq-nondeglast} is a dense subset of the interval $(L_E,4M)$.
At this point, Theorem~\ref{teo-main} can be applied to ensure bifurcation of equatorial timelike closed geodesics for a stationary and equatorial perturbation of the Schwarzschild metric. We refer to Theorem~\ref{teo-schwperturbato} for the precise statement.

\section{Bifurcation of closed orbits for nearly integrable Hamiltonian systems with two degrees of freedom}\label{section-2}
	
In this section, we state and prove a preliminary result ensuring, for a nearly integrable Hamiltonian system with two degrees of freedom, bifurcation of closed orbits from periodic invariant tori of the unperturbed problem. More precisely, in Section~\ref{section-2.1} we first discuss a Poincar\'{e}--Birkhoff-type theorem for exact symplectic maps in the plane; then, in Section~\ref{section-2.2} we show how to apply this result to the case of nearly integrable Hamiltonian systems with two degrees of freedom, via an energetic reduction procedure. 
	
\subsection{Periodic points of exact symplectic maps in the plane}\label{section-2.1}

Let us consider a family of maps $M_{\varepsilon} \colon \mathbb{R} \times \mathopen{[}a,b\mathclose{]} \to \mathbb{R}^{2}$ of class $\mathcal{C}^{4}$ (the index $\varepsilon$ denotes a small real parameter, precisely $|\varepsilon|\leq \bar{\varepsilon}$), having the form
\begin{equation}\label{forma}
M_{\varepsilon}(\theta,r) = \bigl{(} \theta + \alpha(r) + \mu_{1}(\theta,r;\varepsilon), r + \mu_{2}(\theta,r;\varepsilon) \bigr{)},
\end{equation}
where $\mu_{1}$ and $\mu_{2}$ are $2\pi$-periodic in the variable $\theta$ and satisfy
$\mu_{1}(\theta,r;0) = \mu_{2}(\theta,r;0) \equiv 0$. We also assume that each $M_{\varepsilon}$ is \textit{exact symplectic}, in the sense that there exists a function $V_{\varepsilon} = V_{\varepsilon}(\theta,r) \colon \mathbb{R} \times \mathopen{[}a,b\mathclose{]} \to \mathbb{R}$ of class $\mathcal{C}^{4}$, $2\pi$-periodic in $\theta$, and such that
\begin{equation}\label{exact}
\mathrm{d}V_{\varepsilon} = M_{\varepsilon}^{*} \lambda - \lambda, \quad \text{where $\lambda = r \, \mathrm{d}\theta$.}
\end{equation}

We are interested in the existence of periodic points of the map $M_{\varepsilon}$. Precisely, given $(m_{1},m_{2}) \in (\mathbb{N}\setminus\{0\}) \times \mathbb{Z}$, we say that a point $(\bar\theta,\bar r)$ is a $(m_{1},m_{2})$-periodic point for the map $M_{\varepsilon}$ if $(\bar\theta,\bar r) \in \mathrm{dom}(M_{\varepsilon}^{j})$, for every $j = 1,\ldots,m_1$, and 
\begin{equation*}
M_{\varepsilon}^{m_1}(\bar{\theta},\bar r) = (\bar\theta + 2\pi m_2, \bar r).
\end{equation*}
Let us notice that if $(\bar{\theta},\bar r)$ is a $(m_1,m_2)$-periodic point, then the points $M_{\varepsilon}^j(\bar{\theta},\bar r)$ are also 
$(m_1,m_2)$-periodic points, for every $j = 1,\ldots,m_1-1$: all these points have to be considered as equivalent (they form a so-called periodicity class). Moreover, if  $(\bar\theta,\bar r)$ is a $(m_1,m_2)$-periodic point, then all the points of the type
$(\bar\theta + 2\pi k,\bar r)$ with $k \in \mathbb{Z}$ are also $(m_1,m_2)$-periodic points, and again such points have to be considered equivalent.
Hence, in what follows we agree to say that two $(m_1,m_2)$-periodic points $(\bar\theta_1,\bar r_1)$ and $(\bar\theta_2,\bar r_2)$ are distinct if 
\begin{equation}\label{formula-distinti1}
M_{\varepsilon}^j(\bar\theta_1,\bar r_1) - (\bar\theta_2,\bar r_2) \notin 2\pi \mathbb{Z} \times \{0\}, \quad \text{for every $j \in\{1,\ldots,m_1-1\}$.} 
\end{equation}

The next result ensures the existence of a countable family of periodic points for the map $M_{\varepsilon}$, provided the parameter $\varepsilon$ is small enough and a local twist condition is assumed. We believe that a result of this type, which can be obtained by a combined use of KAM theory and Poincar\'e--Birkhoff theorem, is well-known to the experts of the field (see for instance \cite{DeTe-23,Or-96} for two papers developing, in different contexts, a similar procedure). 
However, since it seems difficult to find an adequate reference in the literature, we provide a statement which is suited for the applications we have in mind, together with a sketch of the proof. See also Remark~\ref{remPBclassico} and Remark~\ref{remAM} for further comments.

\begin{theorem}\label{KAMePB}
Let $M_{\varepsilon} \colon \mathbb{R} \times [a,b] \to \mathbb{R}^2$ be a one-to-one map of class $\mathcal{C}^4$, exact symplectic, and satisfying \eqref{forma}. 
Moreover, we assume that
\begin{equation}\label{resto}
\Vert \mu_1(\cdot,\cdot; \varepsilon) \Vert_{\mathcal{C}^4} + \Vert \mu_2(\cdot,\cdot; \varepsilon) \Vert_{\mathcal{C}^4} \to 0, \quad \text{as $\varepsilon \to 0$.}
\end{equation}
Furthermore, let us suppose that for some $r^* \in \mathopen{]}a,b\mathclose{[}$ there exists a pair of coprime\footnote{
We recall that, in the case $n_2 = 0$, this implies that $n_1 = 1$.} integers $(n_1,n_2) \in (\mathbb{N}\setminus\{0\}) \times \mathbb{Z}$ such that
\begin{equation}\label{condizione1}
\alpha(r^*) = 2\pi \frac{ n_2}{n_1}.
\end{equation}
Finally, let us assume
\begin{equation}\label{condizione2}
\alpha'(r^*) \neq 0.
\end{equation}
Then, there exist $\varepsilon^* > 0$ and $\delta^* > 0$ such that, for every pair of coprime integers $(m_1,m_2) \in (\mathbb{N}\setminus\{0\}) \times \mathbb{Z}$ satisfying
\begin{equation*}
\left\vert \frac{m_2}{m_1} - \frac{n_2}{n_1}\right \vert < \delta^*,
\end{equation*}
and for every $\varepsilon$ satisfying $\vert\varepsilon \vert < \varepsilon^*$, the map $M_{\varepsilon}$ has at least two
distinct $(m_1,m_2)$-periodic points. 
\end{theorem}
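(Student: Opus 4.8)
The plan is to combine \emph{Moser's invariant curve theorem} with the \emph{Poincar\'e--Birkhoff fixed point theorem}, letting the single hypothesis \eqref{condizione2} play a double role: the twist $\alpha'(r^*)\neq 0$ makes the unperturbed map $M_0(\theta,r)=(\theta+\alpha(r),r)$ a monotone twist map in a neighbourhood of $r=r^*$, so that Moser's theorem produces invariant curves confining the perturbed dynamics, and it simultaneously renders the rotation number $r\mapsto \alpha(r)/2\pi$ strictly monotone, so that the boundary twist condition required by Poincar\'e--Birkhoff will hold. Throughout I would assume, without loss of generality, $\alpha'(r^*)>0$, the opposite case being symmetric.

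First I would set up the confining annulus. By monotonicity I may fix two Diophantine numbers $\rho_-<\tfrac{n_2}{n_1}<\rho_+$, as close to $\tfrac{n_2}{n_1}$ as desired, together with the radii $r_-<r^*<r_+$ defined by $\alpha(r_\pm)=2\pi\rho_\pm$, and then choose $\delta^*>0$ so small that $\bigl|\tfrac{m_2}{m_1}-\tfrac{n_2}{n_1}\bigr|<\delta^*$ forces $\rho_-<\tfrac{m_2}{m_1}<\rho_+$. Invoking Moser's invariant curve theorem---whose hypotheses are guaranteed by the $\mathcal{C}^4$ regularity and by the smallness \eqref{resto} of the remainders---I obtain $\varepsilon^*>0$ such that, for every $|\varepsilon|<\varepsilon^*$, the map $M_\varepsilon$ possesses two invariant curves $\Gamma_\varepsilon^{\pm}$, each a $2\pi$-periodic graph $r=\gamma^{\pm}_\varepsilon(\theta)$ close to the circle $\{r=r_\pm\}$, on which $M_\varepsilon$ is conjugate to the rigid rotation of rotation number $\rho_\pm$. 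I then let $A_\varepsilon$ be the closed annulus bounded by $\Gamma_\varepsilon^-$ and $\Gamma_\varepsilon^+$; crucially, $\rho_\pm$, and hence $\delta^*$ and $\varepsilon^*$, are fixed once and for all, uniformly in $(m_1,m_2)$.

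Next I would run the Poincar\'e--Birkhoff step on $A_\varepsilon$. Since $\Gamma_\varepsilon^{\pm}$ are invariant and $M_\varepsilon$ preserves area (being exact symplectic), the annulus $A_\varepsilon$ is invariant, so $M_\varepsilon^{m_1}$ is well defined on it. Using the natural lift $\tilde M_\varepsilon$ supplied by \eqref{forma}, I consider $N_\varepsilon:=\tilde M_\varepsilon^{\,m_1}-(2\pi m_2,0)$, whose fixed points in $A_\varepsilon$ are exactly the $(m_1,m_2)$-periodic points of $M_\varepsilon$ lying there. The map $N_\varepsilon$ is area-preserving and maps each boundary curve onto itself; on $\Gamma_\varepsilon^-$ its rotation number equals $m_1\rho_- - m_2<0$, while on $\Gamma_\varepsilon^+$ it equals $m_1\rho_+ - m_2>0$. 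Thus $N_\varepsilon$ rotates the two boundaries in opposite senses, which is precisely the boundary twist condition; the exactness of $M_\varepsilon$ together with the graph property of the $\Gamma_\varepsilon^{\pm}$ guarantees zero net flux, so the Poincar\'e--Birkhoff theorem applies and yields at least two fixed points of $N_\varepsilon$ in distinct periodicity classes, hence two distinct $(m_1,m_2)$-periodic points of $M_\varepsilon$ in the sense of \eqref{formula-distinti1}. Coprimality of $(m_1,m_2)$ ensures these have minimal period $m_1$.

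I expect the main obstacle to be the \emph{uniform} application of Moser's theorem: verifying its hypotheses under only $\mathcal{C}^4$ regularity and the perturbative smallness \eqref{resto}, and extracting the two confining curves with the prescribed (Diophantine) rotation numbers $\rho_\pm$ valid for all $|\varepsilon|<\varepsilon^*$ with a single threshold. Once the invariant annulus $A_\varepsilon$ is in place, the remaining work---area-preservation of the iterate, the sign computation of the boundary rotation numbers, and the bookkeeping that distinct Poincar\'e--Birkhoff fixed points correspond to distinct periodicity classes---is routine.
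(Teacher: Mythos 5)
Your proposal follows essentially the same route as the paper's proof: Moser's invariant curve theorem (in a version valid for $\mathcal{C}^4$ maps with the intersection property, which is exactly what the exact symplectic condition \eqref{exact} supplies) produces two invariant graphs with $\varepsilon$-independent rotation numbers bracketing $n_2/n_1$, and the Poincar\'e--Birkhoff theorem applied on the resulting invariant annulus then yields, uniformly in $\varepsilon$, the two distinct $(m_1,m_2)$-periodic points for every $m_2/m_1$ in between. The only cosmetic difference is that the paper first straightens the annulus by a symplectic conjugation and invokes Gol\'e's version of Poincar\'e--Birkhoff directly for $(m_1,m_2)$-periodic points, rather than passing to the $m_1$-th iterate as you do.
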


\begin{remark}\label{rem-2.1}
We point out that the injectivity of the map $M_{\varepsilon}$ follows from assumptions \eqref{forma} and \eqref{resto}, when $\varepsilon$ is small enough: therefore, this hypothesis could be dropped. However, since it will be directly satisfied in the application of Theorem~\ref{KAMePB} given in Section~\ref{section-2.2}, we have preferred to keep it for the sake of clarity.
\hfill$\lhd$
\end{remark}

\begin{remark}\label{remPBclassico}
We stress that the crucial feature of Theorem~\ref{KAMePB} lies in the fact that, for $\varepsilon$ small enough, an infinite (countable) family of periodic points (precisely, $(m_1,m_2)$-periodic points with $m_2/m_1$ close enough to $n_2/n_1$) is provided.

In order to clarify this issue, let us observe that the existence of a $(n_1,n_2)$-periodic point, for $\vert \varepsilon \vert < \varepsilon^*$, could be proved by a simple application of the Poincar\'e--Birkhoff theorem for non-invariant annuli (see \cite{Di-82,Fr-88,LCWa-10,Re-97}): to this end, much weaker regularity assumptions on $M_{\varepsilon}$ are needed (in particular, it is enough that $M_{\varepsilon}$ is a map of class $\mathcal{C}^1$ which is close to $M_0$ in the $\mathcal{C}^0$-norm). Of course, since the twist condition \eqref{condizione2} extends by continuity to a neighborhood of $r^*$, one can repeat the argument to find $(m_1,m_2)$-periodic points with $m_2/m_1$ close enough to $n_2/n_1$. However, this requires in principle to change the value of $\varepsilon^*$, so that a uniform choice of $\varepsilon^*$ is possible only for a finite (yet arbitrarily large) number of $(m_2,m_1)$. See for instance \cite{FoSaZa-12} for a very general result in this spirit, dealing however with periodic solutions of planar Hamiltonian systems rather than periodic points of planar maps.

As it will be clear from the proof, the strategy to find infinitely many periodic points for the same value of $\varepsilon$ is a joint use of KAM theory (precisely, Moser invariant curve theorem) and Poincar\'e--Birkhoff theorem. More precisely, one first finds, for $\vert \varepsilon \vert < \varepsilon^*$, an invariant annulus (containing the circle $\{r = r^*\}$) for the map $M_{\varepsilon}$; then, the Poincar\'e--Birkhoff theorem is applied on this annulus (the crucial point is that, since the annulus is now invariant, the boundary twist condition extends to all the iterates of $M_{\varepsilon}$). Of course, this requires more severe regularity assumptions on $M_{\varepsilon}$ and, in particular, condition
\eqref{resto} to hold true.

It is interesting to mention that a result in this spirit is contained in \cite{BeKa-87}: such a result holds true also in higher dimension,  however it requires an extra convexity assumption on the generating function of the map, which in our case would lead to $\alpha'<0$.
\hfill$\lhd$
\end{remark}

\begin{proof}[Sketch of the proof of Theorem~\ref{KAMePB}]
We give the proof in the case $\alpha'(r^*) > 0$, the complementary situation being analogous. 
Correspondingly, we take $\eta > 0$ so that $J = [r^*-\eta,r^*+\eta] \subset [a,b]$ and $\alpha'(r) > 0$ for every $r \in J$. Then, thanks to \eqref{resto} and recalling that assumption \eqref{exact} implies the so-called intersection property (see, for instance, \cite[Exercise~6]{KuOr-13} and \cite[Exercise~13]{Or-01}), we can apply to the map $M_{\varepsilon}\colon \mathbb{R} \times J \to \mathbb{R}^2$ the Moser invariant curve theorem in the version by Herman \cite{He-83,He-86}.
Hence, there exists a value $\varepsilon^* > 0$ such that, for $\vert \varepsilon \vert < \varepsilon^*$, 
there are two functions $r_i^\varepsilon \colon \mathbb{R} \to \mathbb{R}$, of class at least $\mathcal{C}^1$ and $2\pi$-periodic, such that, for $i=1,2$,
\begin{equation*}
M_{\varepsilon}(\Gamma_i^\varepsilon) = \Gamma_i^\varepsilon, 
\quad \text{where $\Gamma_i^\varepsilon = \{(\theta,r_i^\varepsilon(\theta) \colon \theta \in \mathbb{R} \}.$}
\end{equation*}
By the invariant curve theory (cf.~\cite[Remark, p.~3]{Mo-62}), these curves can be taken in such a way that the rotation number of $M_{\varepsilon} \colon \Gamma_i^\varepsilon \to \Gamma_i^\varepsilon$ is equal to a value $\rho_i$ independent of $\varepsilon$, with
\begin{equation*}
\rho_1 < \frac{n_2}{n_1} < \rho_2,
\end{equation*}
and, moreover, that
\begin{equation*}
r^*-\eta < \min_{\theta}r_1^\varepsilon(\theta) \leq \max_{\theta}r_1^\varepsilon(\theta) < r^* < \min_{\theta}r_2^\varepsilon(\theta) \leq \max_{\theta}r_2^\varepsilon(\theta) < r^* + \eta.
\end{equation*}
Now, we consider the region
\begin{equation*}
\mathcal{A}_{\varepsilon} = \{ (\theta,r) \colon \theta \in \mathbb{R}, \,r_1^\varepsilon(\theta) \leq r \leq r_2^\varepsilon(\theta)\}.
\end{equation*} 
It is easy to check that $\mathcal{A}_{\varepsilon}$ is invariant for $M_{\varepsilon}$ and that
$M_{\varepsilon} \colon \mathcal{A}_{\varepsilon} \to \mathcal{A}_{\varepsilon}$ is a (symplectic) homeomorphism.
Moreover, it is well-known (see, for instance, \cite[Remark~5.15]{DeTe-23}) that there exists a symplectic change of variable 
$\Phi_{\varepsilon}$ transforming the curves $\Gamma_i^\varepsilon$ into straight lines $\mathbb{R} \times \{c_i^\varepsilon\}$
and the region $\mathcal{A}_{\varepsilon}$ into the standard strip $\mathbb{R} \times [c_1^\varepsilon,c_2^\varepsilon]$. Accordingly, we can consider the symplectic homeomorphism
\begin{equation*}
\widetilde{M}_{\varepsilon} = \Phi_{\varepsilon} \circ M_{\varepsilon} \circ \Phi_{\varepsilon}^{-1} \colon \mathbb{R} \times [c_1^\varepsilon,c_2^\varepsilon] \to \mathbb{R} \times [c_1^\varepsilon,c_2^\varepsilon]. 
\end{equation*}
This construction preserves the rotation numbers and so, in particular, the rotation number of $\widetilde{M}_{\varepsilon}\colon \mathbb{R} \times \{c_i^\varepsilon\} \to \mathbb{R} \times \{c_i^\varepsilon\}$ is $\rho_i$. Therefore, the Poincar\'e--Birkhoff theorem in the version \cite[Theorem~7.1]{Go-01}
can be applied, ensuring the existence of two distinct $(m_1,m_2)$-periodic points for $\widetilde{M}_{\varepsilon}$ whenever 
$m_2/m_1 \in (\rho_1,\rho_2)$
and thus, in particular, when
\begin{equation*}
\left\vert \frac{m_2}{m_1} - \frac{n_2}{n_1} \right\vert < \delta^* := \min_{i=1,2} \left\vert \rho_i - \frac{n_2}{n_1} \right\vert. 
\end{equation*}
Undoing the change of variable, we thus find two distinct $(m_1,m_2)$-periodic points for the original map $M_{\varepsilon}$.
\end{proof}

\begin{remark}\label{remAM}
Looking at the above proof, it is immediately realized that, after having applied KAM theory to find an invariant annulus, it would be possible to use Aubry--Mather theory (see, for instance, \cite[Theorem~8.3]{Go-01}) to find, besides periodic points, Mather sets associated with irrational rotation numbers. We do not explore further this possibility, since it is not clear the role played by such sets in the application developed in the next section. This will be the subject of future investigations.
\hfill$\lhd$
\end{remark}

\subsection{The bifurcation result}\label{section-2.2}
	
Let us now consider a Hamiltonian system with two degrees of freedom of the form
\begin{equation} \label{eq-sispertazioneangolo}
\begin{cases}
\, \dot{\varphi}=\nabla \mathcal{K}_0 (I)+\varepsilon \, \nabla_I \widetilde{\mathcal{K}}(\varepsilon,\varphi,I),
\vspace{3pt}\\
\, \dot{I}=-\varepsilon \, \nabla_\varphi \widetilde{\mathcal{K}}(\varepsilon,\varphi, I), 
\end{cases}
\end{equation}
where $\mathcal{K}_0 \colon D\to \mathbb{R}$ is a function of class $\mathcal{C}^6$, with $D\subset \mathbb{R}^2$ open, and $\widetilde{\mathcal{K}} \colon (-\hat\varepsilon,\hat\varepsilon)\times \mathbb{R}^2 \times D\to \mathbb{R}$ is a function of class $\mathcal{C}^6$ which is $2\pi$-periodic in each $\varphi_i$-variable, where $\varphi = (\varphi_1,\varphi_2)$.
For every $\varepsilon \in (-\hat\varepsilon,\hat\varepsilon)$, let 
\begin{equation*}
\mathcal{K}_{\varepsilon} (\varphi,I)= \mathcal{K}_0(I)+\varepsilon \, \widetilde{\mathcal{K}}(\varepsilon,\varphi, I),
\quad (\varphi,I)\in \mathbb{R}^2 \times D,
\end{equation*}
be the Hamiltonian of the system \eqref{eq-sispertazioneangolo}.
	
We point out that system \eqref{eq-sispertazioneangolo} has to be meant as the lifting of a corresponding Hamiltonian system on $\mathbb{T}^2\times D$. Accordingly, from now on we agree to say that a solution $(\varphi,I)$ of \eqref{eq-sispertazioneangolo} is $T$-periodic, for some $T>0$, if $I$ is $T$-periodic and
\begin{equation*}
\varphi(T)-\varphi(0)\in 2\pi \mathbb{Z}^2.
\end{equation*}
More precisely, if 
\begin{equation*}
\varphi(T)-\varphi(0) = (2\pi l_1,2\pi l_2),
\end{equation*}
then we say that $(\varphi,I)$ is a $T$-periodic solution with winding vector $(l_1,l_2) \in \mathbb{Z}^2$. 
Incidentally, let us observe that if $l_1$ and $l_2$ are coprime (in particular both non-zero), then $T$ is necessarily the minimal period of the solution. 
	
We also notice that if $(\varphi(t),I(t))$ is a $T$-periodic solution, then $(\varphi(t + \tau),I(t+\tau))$ is a $T$-periodic solution for every $\tau \in \mathopen{[}0,T\mathclose{[}$; moreover, $(\varphi(t) + 2\pi k, I(t))$ with $k \in \mathbb{Z}$ is a $T$-periodic solution, as well. Accordingly, from now on we agree to say that two $T$-periodic solutions $(\varphi^1,I^1)$ and $(\varphi^2,I^2)$ of \eqref{eq-sispertazioneangolo} are distinct if 
\begin{equation}\label{formula-distinti2}
(\varphi^1(t+\tau),I^1(t+\tau)) - (\varphi^2(t),I^2(t)) \notin 2\pi \mathbb{Z}^2 \times \{(0,0)\}, \quad \text{for every $\tau \in \mathopen{[}0,T\mathclose{[}$ and $t \in \mathbb{R}$.} 
\end{equation}

Our aim is to prove the existence of periodic solutions (i.e.~closed orbits) with prescribed energy of 
\eqref{eq-sispertazioneangolo}, bifurcating from periodic solutions of the unperturbed system 
\begin{equation} \label{eq-sisazioneangolo}
\begin{cases}
\, \dot{\varphi}=\nabla \mathcal{K}_0 (I),
\\ 
\, \dot{I}=0.
\end{cases}
\end{equation}
To this end, let us first observe that the Hamiltonian system \eqref{eq-sisazioneangolo} is integrable; the solution $(\varphi(\cdot;\varphi^*,I^*),I(\cdot;\varphi^*,I^*))$ satisfying $(\varphi (0;\varphi^*,I^*),I(0;\varphi^*,I^*))=(\varphi^*,I^*)$, with $(\varphi^*,I^*)=(\varphi^*_{1},\varphi^*_{2},I^*_{1},I^*_{2})\in \mathbb{R}^2\times D$, is given by
\begin{equation} \label{eq-solazioneangolo}
(\varphi(t;\varphi^*,I^*),I(t;\varphi^*,I^*))=(\varphi^*+t\, \nabla \mathcal{K}_0(I^*),I^*),
\quad \text{for all $t\in \mathbb{R}$.}
\end{equation}
Let us now assume that
\begin{equation*}
\partial_{I_1} \mathcal{K}_0(I^*) \neq 0
\end{equation*}
and that there exists a pair of coprime integer numbers $(n_1,n_2) \in (\mathbb{N}\setminus\{0\}) \times \mathbb{Z}$ such that 
\begin{equation} \label{eq-ipoazioneangolo}
\frac{\partial_{I_2}\mathcal{K}_0(I^*)}{\partial_{I_1}\mathcal{K}_0(I^*)} = \frac{n_2}{n_1}.
\end{equation}
Defining 
\begin{equation*}
\tau^* = \frac{2\pi n_1}{\vert \partial_{I_1}\mathcal{K}_0(I^*) \vert},
\end{equation*}
from \eqref{eq-solazioneangolo} we thus have that for every $\varphi^*\in \mathbb{R}^2$ the solution $(\varphi(\cdot;\varphi^*,I^*),I(\cdot;\varphi^*,I^*))$ of \eqref{eq-sisazioneangolo} is constant in the $I$-component and satisfies
\begin{equation*}
\varphi(\tau^*;\varphi^*,I^*)=\varphi^*+ \mathrm{sgn} (\partial_{I_1}\mathcal{K}_0(I^*))(2\pi n_1,2\pi n_2),
\end{equation*}
that is, the invariant torus $\{I = I^*\}$ is filled by periodic solutions of minimal period $\tau^*$ and winding vector equal to $\mathrm{sgn} (\partial_{I_1}\mathcal{K}_0(I^*))(2\pi n_1,2\pi n_2)$.

The following result provides periodic solutions of \eqref{eq-sispertazioneangolo} bifurcating from the invariant torus $\{I = I^*\}$ and having prescribed energy $\mathcal{K}_{\varepsilon}=\mathcal{K}_0(I^*)$ (see also Remark~\ref{rem1} and Remark~\ref{rem2} for some comments).

\begin{theorem}\label{teo-pbapplicato}
Let $I^*=(I_1^*,I_2^*)\in D$ be such that \eqref{eq-ipoazioneangolo} is satisfied for a pair of coprime integers $(n_1, n_2)\in (\mathbb{N}\setminus\{0\})\times \mathbb{Z}$, and let $K^*=\mathcal{K}_0(I^*)$. Moreover, assume that
\begin{equation} \label{eq-iponondegenere}
\langle \nabla^2 \mathcal{K}_0(I^*) (-\partial_{I_2} \mathcal{K}_0(I^*), \partial_{I_1} \mathcal{K}_0(I^*)),(-\partial_{I_2} \mathcal{K}_0(I^*), \partial_{I_1} \mathcal{K}_0(I^*)) \rangle \neq 0.
\end{equation}
Then, there exist $\varepsilon^* >0$ and $\delta^* > 0$ such that, for every pair of coprime integers $(m_1,m_2) \in (\mathbb{N}\setminus\{0\}) \times \mathbb{Z}$ satisfying
\begin{equation}\label{hpm1}
\left\vert \frac{m_2}{m_1} - \frac{n_2}{n_1}\right \vert < \delta^*,
\end{equation}
and for every $\varepsilon$ satisfying $\vert\varepsilon \vert < \varepsilon^*$, there are two distinct periodic solutions of the Hamiltonian system \eqref{eq-sispertazioneangolo}, with energy $\mathcal{K}_{\varepsilon} = K^*$ and winding 
vector on their minimal period equal to  $(l_1,l_2) = \mathrm{sgn}(\partial_{I_1}\mathcal{K}_0(I^*)) (m_1,m_2)$. 
\end{theorem}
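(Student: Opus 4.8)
The plan is to follow the dynamical strategy announced in the introduction: reduce the search for isoenergetic closed orbits of the autonomous two-degrees-of-freedom system \eqref{eq-sispertazioneangolo} to the search for periodic points of a planar exact symplectic map, and then invoke Theorem~\ref{KAMePB}. The whole point will be to recognize that the twist condition \eqref{condizione2} of that theorem is nothing but the non-degeneracy hypothesis \eqref{eq-iponondegenere}.

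\emph{Isoenergetic reduction.} Since $\partial_{I_1}\mathcal{K}_0(I^*)\neq 0$, for $(\varphi,I)$ near $\{I=I^*\}$ and $\varepsilon$ small the energy relation $\mathcal{K}_{\varepsilon}(\varphi,I)=K^*$ can be solved, via the implicit function theorem, for $I_1 = -\mathcal{G}(\varphi_1;\varphi_2,I_2;\varepsilon)$, with $\mathcal{G}$ of class $\mathcal{C}^6$ and $2\pi$-periodic in $\varphi_1$ and in $\varphi_2$. Following the classical Jacobi procedure (cf.~\cite[Section~45]{Ar-89}), on the energy surface the flow of \eqref{eq-sispertazioneangolo} is described, using $\varphi_1$ as the new independent variable, by the $1$-degree-of-freedom non-autonomous Hamiltonian system with Hamiltonian $\mathcal{G}$, time $\varphi_1$, and canonical pair $(\varphi_2,I_2)$; here I assume $\partial_{I_1}\mathcal{K}_0(I^*)>0$, the opposite sign being handled by reversing the orientation of $\varphi_1$ and accounting for it through the factor $\mathrm{sgn}(\partial_{I_1}\mathcal{K}_0(I^*))$ in the winding vector. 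As $\mathcal{G}$ is $2\pi$-periodic in the time $\varphi_1$, I let $M_{\varepsilon}$ be its period map over $\varphi_1\in[0,2\pi]$, defined on an annulus $\mathbb{R}\times[a,b]$ with $I_2^*\in(a,b)$. Being the period map of a Hamiltonian flow, $M_{\varepsilon}$ is exact symplectic in the sense of \eqref{exact}, it is one-to-one (Remark~\ref{rem-2.1}), and, by smooth dependence on parameters, it is of class $\mathcal{C}^4$ and satisfies \eqref{resto}, since $\mathcal{G}$ reduces to a function of $I_2$ alone when $\varepsilon=0$.

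\emph{The twist is the non-degeneracy.} For $\varepsilon=0$ one has $I_1=g(I_2)$, where $\mathcal{K}_0(g(I_2),I_2)=K^*$, so the reduced unperturbed flow leaves $I_2$ fixed and advances $\varphi_2$ linearly; hence $M_0(\varphi_2,I_2)=(\varphi_2+\alpha(I_2),I_2)$ with $\alpha(I_2)=-2\pi g'(I_2)$, i.e.\ $M_{\varepsilon}$ has the form \eqref{forma}. Differentiating $\mathcal{K}_0(g(I_2),I_2)=K^*$ once gives $g'=-\partial_{I_2}\mathcal{K}_0/\partial_{I_1}\mathcal{K}_0$, so that, by \eqref{eq-ipoazioneangolo},
\begin{equation*}
\alpha(I_2^*)=2\pi\,\frac{\partial_{I_2}\mathcal{K}_0(I^*)}{\partial_{I_1}\mathcal{K}_0(I^*)}=2\pi\frac{n_2}{n_1},
\end{equation*}
which is exactly \eqref{condizione1}. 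Differentiating twice and writing $a=\partial_{I_1}\mathcal{K}_0(I^*)$, $b=\partial_{I_2}\mathcal{K}_0(I^*)$, a direct computation yields
\begin{equation*}
\alpha'(I_2^*)=-2\pi\,g''(I_2^*)=\frac{2\pi}{a^{3}}\,\big\langle \nabla^2\mathcal{K}_0(I^*)(-b,a),(-b,a)\big\rangle,
\end{equation*}
so that the twist condition \eqref{condizione2}, $\alpha'(I_2^*)\neq 0$, holds if and only if the non-degeneracy hypothesis \eqref{eq-iponondegenere} is satisfied. This algebraic identity is the heart of the matter.

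\emph{Conclusion.} With \eqref{forma}, \eqref{resto}, exactness, \eqref{condizione1} and \eqref{condizione2} all in force, Theorem~\ref{KAMePB} provides $\varepsilon^*,\delta^*>0$ such that, for every coprime $(m_1,m_2)$ with $|m_2/m_1-n_2/n_1|<\delta^*$ and every $|\varepsilon|<\varepsilon^*$, the map $M_{\varepsilon}$ has two distinct $(m_1,m_2)$-periodic points. Each such point is the trace at $\varphi_1=0$ of a solution of the reduced system that closes up after $\varphi_1$ has increased by $2\pi m_1$ and $\varphi_2$ by $2\pi m_2$; reconstructing $I_1$ from the energy surface and the physical time $t$ from $\mathrm{d}t=\mathrm{d}\varphi_1/\partial_{I_1}\mathcal{K}_{\varepsilon}$ produces a periodic solution of \eqref{eq-sispertazioneangolo} with energy $K^*$ and winding vector $\mathrm{sgn}(\partial_{I_1}\mathcal{K}_0(I^*))(m_1,m_2)$. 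Because $\varphi_1$ is strictly monotone along these solutions, any time-translation relating two reconstructed orbits must be an integer multiple of the period map's strobe time, so distinctness in the sense of \eqref{formula-distinti1} translates into distinctness in the sense of \eqref{formula-distinti2}. I expect the main obstacle to be precisely the rigorous justification of the reduction step---checking that the reduced period map is genuinely exact symplectic, of class $\mathcal{C}^4$, and satisfies the uniform $\mathcal{C}^4$-smallness \eqref{resto}---whereas the identification of the twist with \eqref{eq-iponondegenere} is the clean computation above.
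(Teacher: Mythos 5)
Your proposal is correct and takes essentially the same route as the paper's own proof: the isoenergetic reduction with $\varphi_1$ as the new time and the reconstruction of solutions via $\mathrm{d}t=\mathrm{d}\varphi_1/\partial_{I_1}\mathcal{K}_{\varepsilon}$ is the paper's Lemma~\ref{lem-solridotto}, your verification that the $2\pi$-strobe map is exact symplectic, $\mathcal{C}^4$ and $\mathcal{C}^4$-small in $\varepsilon$ is Lemma~\ref{lem1}, and your identity $\alpha'(I_2^*)=\tfrac{2\pi}{(\partial_{I_1}\mathcal{K}_0(I^*))^{3}}\langle \nabla^2\mathcal{K}_0(I^*)(-b,a),(-b,a)\rangle$ is exactly Lemma~\ref{lem2}, after which Theorem~\ref{KAMePB} is applied just as in the paper. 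The only cosmetic difference is that you split according to the sign of $\partial_{I_1}\mathcal{K}_0(I^*)$, whereas the paper handles both signs at once by allowing the time map $t_\varepsilon$ to be decreasing and taking the minimal period $\vert T_\varepsilon\vert$.
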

	
\begin{remark}\label{rem1}
Let us observe that the non-degeneracy condition \eqref{eq-iponondegenere} is equivalent to 
\begin{equation*}
\mathrm{det} \,
\begin{pmatrix}
\nabla^2 \mathcal{K}_0(I^*) & \nabla \mathcal{K}_0(I^*)^{\intercal} 
\vspace{3pt} \\
\nabla \mathcal{K}_0(I^*) & 0
\end{pmatrix} 
\neq 0,
\end{equation*}
which is the condition used in KAM theory for the preservation of quasi-periodic invariant tori at a fixed energy level (cf.~\cite[Appendix~8]{Ar-89} as well as \cite{BrHu-91} and the references therein): Theorem~\ref{teo-pbapplicato} can thus be seen as a periodic counterpart of isoenergetic KAM theory (for the case of two degrees of freedom). 
A previous result in this spirit, valid for the general case of $n$ degrees of freedom but providing bifurcation of solutions only for the choice $(m_1,m_2) = (n_1,n_2)$, has been recently proved in \cite{BDF4}, using a completely different approach, that is, Weinstein's bifurcation theory from periodic manifolds \cite{We-73,We-78}. We stress that the main difference between \cite[Theorem~2.2]{BDF4} and Theorem~\ref{teo-pbapplicato} lies in the fact that, for the same value of $\varepsilon$, 
an infinite (countable) family of periodic solutions, labeled by the pair of integers $(m_1,m_2)$, is provided, cf.~Remark~\ref{remPBclassico}.
\hfill$\lhd$
\end{remark}

The rest of this section is devoted to the proof of Theorem~\ref{teo-pbapplicato}, which will be achieved by  first performing an energetic reduction so as to reduce the degrees of freedom to one (cf.~\cite[Section~45]{Ar-89}), and then applying Theorem~\ref{KAMePB}
to the Poincar\'e map of the reduced system.
	
So, let us proceed at first with the energetic reduction. From the first relation in \eqref{eq-ipoazioneangolo}, by a semiglobal version of the implicit function theorem we deduce that there exist  $\varepsilon^* \in (0,\hat\varepsilon)$, $[a,b]\subset \mathbb{R}$, with $I_2^*\in [a,b]$, and a unique function $\mathcal{I} \colon [-\bar{\varepsilon},\bar{\varepsilon}]\times \mathbb{R}^2\times [a,b]\to \mathbb{R}$ such that
\begin{equation*}
\mathcal{K}_{\varepsilon} (\varphi_1,\varphi_2,\mathcal{I}(\varepsilon,\varphi_1,\varphi_2,I_2),I_2)=K^*,
\quad (\varphi_1,\varphi_2,I_2)\in \mathbb{R}^2\times [a,b].
\end{equation*}
Notice that the periodicity of $\mathcal{K}_{\varepsilon}$ in the pair $(\varphi_1,\varphi_2)$ implies that $\mathcal{I}$ is $2\pi$-periodic in each of the variables $\varphi_1$ and $\varphi_2$.
Moreover, we have $\mathcal{I}\in \mathcal{C}^6 ([-\bar{\varepsilon},\bar{\varepsilon}]\times \mathbb{R}^2\times [a,b])$ and 
\begin{align} 
\partial_{\varphi_1} \mathcal{I}(\varepsilon,\varphi_1,\varphi_2,I_2)=-\dfrac{\partial_{\varphi_1} \mathcal{K}_{\varepsilon} (\varphi_1,\varphi_2, \mathcal{I}(\varepsilon,\varphi_1,\varphi_2,I_2),I_2)}{\partial_{I_1} \mathcal{K}_{\varepsilon} (\varphi_1,\varphi_2, \mathcal{I}(\varepsilon,\varphi_1,\varphi_2,I_2),I_2)},
\\
\partial_{\varphi_2} \mathcal{I}(\varepsilon,\varphi_1,\varphi_2,I_2)=-\dfrac{\partial_{\varphi_2} \mathcal{K}_{\varepsilon} (\varphi_1,\varphi_2, \mathcal{I}(\varepsilon,\varphi_1,\varphi_2,I_2),I_2)}{\partial_{I_1} \mathcal{K}_{\varepsilon} (\varphi_1,\varphi_2, \mathcal{I}(\varepsilon,\varphi_1,\varphi_2,I_2),I_2)},
\\
\partial_{I_2} \mathcal{I}(\varepsilon,\varphi_1,\varphi_2,I_2)=-\dfrac{\partial_{I_2} \mathcal{K}_{\varepsilon} (\varphi_1,\varphi_2, \mathcal{I}(\varepsilon,\varphi_1,\varphi_2,I_2),I_2)}{\partial_{I_1} \mathcal{K}_{\varepsilon} (\varphi_1,\varphi_2, \mathcal{I}(\varepsilon,\varphi_1,\varphi_2,I_2),I_2)}, \label{eq-derfunzimpl}
\end{align}
for every $(\varepsilon,\varphi_1,\varphi_2,I_2)\in [-\bar{\varepsilon},\bar{\varepsilon}]\times \mathbb{R}^2\times [a,b]$. 
For further convenience we also observe that, since $\mathcal{K}_0$ does not depend on the variables $\varphi_1$ and $\varphi_2$, 
the same is true for the function $\mathcal{I}(0,\cdot,\cdot,\cdot)$ and so we define
\begin{equation}\label{eq-I0}
\mathcal{I}_0(I_2) = \mathcal{I}(0,\varphi_1,\varphi_2,I_2),
\quad I_2\in\mathopen{[}a,b\mathclose{]}.
\end{equation}
Moreover,
\begin{equation}\label{eq-I0*}
\mathcal{I}_0(I_2^*) = I_1^*.
\end{equation}
For every $\varepsilon\in \mathbb{R}$, with $|\varepsilon|\leq \bar{\varepsilon}$, we now define $\mathcal{K}^{\rm red}_{\varepsilon} \colon \mathbb{R}^2\times [a,b]\to \mathbb{R}$ by
\begin{equation} \label{eq-defkridotta} 
\mathcal{K}^{\rm red}_{\varepsilon} (\varphi_1,\varphi_2,I_2)=-\mathcal{I}(\varepsilon,\varphi_1,\varphi_2,I_2),
\end{equation}
for every $(\varphi_1,\varphi_2,I_2)\in \mathbb{R}^2\times [a,b]$. As previously observed, $\mathcal{K}^{\rm red}_{\varepsilon}$ is $2\pi$-periodic in the variables $\varphi_1$ and $\varphi_2$. 
The variable $\varphi_1$ is considered as a new time-variable; denoting by ${}'$ the derivative with respect to this variable, the Hamiltonian system associated with the Hamiltonian \eqref{eq-defkridotta} is 
\begin{equation} \label{eq-sisridotto}
\begin{cases}
\, \varphi'_2 = -\partial_{I_2} \mathcal{I} (\varepsilon,\varphi_1,\varphi_2,I_2),
\\
\, I'_2 = \partial_{\varphi_2} \mathcal{I} (\varepsilon,\varphi_1,\varphi_2,I_2).
\end{cases}
\end{equation}
Since this system is $2\pi$-periodic in the time-variable $\varphi_1$, it is natural to look for $2\pi m_1$-periodic solutions, with $m_1 \in (\mathbb{N}\setminus\{0\})$. More precisely, we say that a global solution $(\widetilde{\varphi_2},\widetilde{I_2})$ of \eqref{eq-sisridotto} is
$2\pi m_1$-periodic with winding number $m_2 \in \mathbb{Z}$ if $\widetilde{I_2}$ is $2\pi m_1$-periodic and 
\begin{equation*}
\widetilde{\varphi_2}(2\pi m_1) - \widetilde{\varphi_2}(0)= m_2.
\end{equation*}
Moreover we say that two such solutions $(\widetilde{\varphi_2}^1,\widetilde{I_2}^1)$ and $(\widetilde{\varphi_2}^2,\widetilde{I_2}^2)$ are distinct if
\begin{equation}\label{formula-distinti3}
\begin{aligned}
(\widetilde{\varphi_2}^1(t+2\pi m),\widetilde{I_2}^1(t+2\pi m))
&- (\widetilde{\varphi_2}^2(t),\widetilde{I_2}^2(t)) \notin
2\pi\mathbb{Z} \times \{0\}, 
\\
&\text{for every $m \in \{1,\ldots,m_1-1\}, t \in \mathbb{R}$.}
\end{aligned}
\end{equation}
The next lemma, which is the core of the energetic reduction procedure, shows that $2\pi m_1$-periodic solutions of 
\eqref{eq-sisridotto} gives rise to periodic solutions of \eqref{eq-sispertazioneangolo} having energy $K^*$.
	
\begin{lemma} \label{lem-solridotto}
Let $(\widetilde{\varphi_2},\widetilde{I_2})$ be a $2\pi m_1$-periodic solution of \eqref{eq-sisridotto} with winding number $m_2$, for 
a pair of coprime integers $(m_1,m_2) \in (\mathbb{N}\setminus\{0\}) \times \mathbb{Z}$. Define 
\begin{equation*}
\widetilde{\mathcal{I}_{\varepsilon}} (\varphi_1)=\mathcal{I}(\varepsilon,\varphi_1, \widetilde{\varphi_2} (\varphi_1),\widetilde{I_2}(\varphi_1)),
\quad \varphi_1 \in \mathbb{R},
\end{equation*}
and 
\begin{equation*}
t_{\varepsilon} (\varphi_1)=\int_0^{\varphi_1} \dfrac{1}{\partial_{I_1} \mathcal{K}_{\varepsilon} (\xi,\widetilde{\varphi_2}(\xi), \widetilde{\mathcal{I}_{\varepsilon}}(\xi),\widetilde{I_2}(\xi))}\, \mathrm{d}\xi,
\quad \varphi_1 \in \mathbb{R}.
\end{equation*}
Moreover, let $\widetilde{\varphi_1}=\widetilde{\varphi_1}(\cdot)$ be the inverse of ${t}_{\varepsilon}$ and 
\begin{equation*}
T_{\varepsilon} ={t}_{\varepsilon}(2\pi m_1).
\end{equation*}
Then, the function $(\widehat{\varphi},\widehat{I}) = (\widehat{\varphi_1},\widehat{\varphi_2},\widehat{I_1},\widehat{I_2})$ defined by
\begin{equation}\label{eq-lem}
(\widehat{\varphi_1}(t),\widehat{\varphi_2}(t),\widehat{I_1}(t),\widehat{I_2}(t))
=
(\widetilde{\varphi_1}(t),\widetilde{\varphi_2}(\widetilde{\varphi_1}(t)),\widetilde{\mathcal{I}_{\varepsilon}}(\widetilde{\varphi_1}(t)),\widetilde{I_2}(\widetilde{\varphi_1}(t))),
\quad t\in\mathbb{R},
\end{equation}
is a periodic solution of \eqref{eq-sispertazioneangolo}, with energy $K^*$,  minimal period equal to $\vert T_{\varepsilon} \vert$ and winding vector on the minimal period equal to
\begin{equation*}
(l_1,l_2) = \mathrm{sgn}(\partial_{I_1}\mathcal{K}_0(I^*)) (m_1,m_2).
\end{equation*}
Moreover,  if $(\widetilde{\varphi_2}^1,\widetilde{I_2}^1)$ and $(\widetilde{\varphi_2}^2,\widetilde{I_2}^2)$ are distinct solutions of \eqref{eq-sisridotto} in the sense of \eqref{formula-distinti3}
then the corresponding solutions $(\widehat{\varphi}^1,\widehat{I}^1)$ and $(\widehat{\varphi}^2,\widehat{I}^2)$ of 
\eqref{eq-sispertazioneangolo} are distinct in the sense of \eqref{formula-distinti2}.
\end{lemma}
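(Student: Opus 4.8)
The plan is to recognize that the reconstruction \eqref{eq-lem} is precisely the inverse of the energetic reduction, so it should automatically produce a genuine solution of \eqref{eq-sispertazioneangolo}; the work is to verify this and to transfer the periodicity, winding and distinctness data. First I would check that $t_\varepsilon$ is a well-defined global $\mathcal{C}^1$-diffeomorphism of $\mathbb{R}$. Since \eqref{eq-ipoazioneangolo} forces $\partial_{I_1}\mathcal{K}_0(I^*) \neq 0$, for $\varepsilon$ small the integrand $1/\partial_{I_1}\mathcal{K}_\varepsilon$ evaluated along the reduced orbit keeps a constant sign and stays bounded away from $0$ and $\infty$; hence $t_\varepsilon$ is strictly monotone and surjective, so its inverse $\widetilde{\varphi}_1$ is defined on all of $\mathbb{R}$ with $\widetilde{\varphi}_1(0)=0$. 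Differentiating $t_\varepsilon(\widetilde{\varphi}_1(t))=t$ gives $\widetilde{\varphi}_1'(t)=\partial_{I_1}\mathcal{K}_\varepsilon$ evaluated at the state \eqref{eq-lem}.

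Next I would verify by a chain-rule computation that $(\widehat{\varphi},\widehat{I})$ solves \eqref{eq-sispertazioneangolo}. Writing $\varphi_1=\widetilde{\varphi}_1(t)$, the equation for $\widehat{\varphi}_1$ is the previous display; the equations for $\widehat{\varphi}_2$ and $\widehat{I}_2$ follow from the reduced system \eqref{eq-sisridotto} together with the identities \eqref{eq-derfunzimpl} for $\partial_{I_2}\mathcal{I}$ and $\partial_{\varphi_2}\mathcal{I}$, after multiplying by $\widetilde{\varphi}_1'(t)=\partial_{I_1}\mathcal{K}_\varepsilon$. The only delicate point is the equation for $\widehat{I}_1=\widetilde{\mathcal{I}_\varepsilon}(\varphi_1)$: differentiating $\widetilde{\mathcal{I}_\varepsilon}$ along the reduced flow and substituting \eqref{eq-sisridotto}, the two terms involving $\partial_{\varphi_2}\mathcal{I}\,\partial_{I_2}\mathcal{I}$ cancel, leaving $\tfrac{d}{d\varphi_1}\widetilde{\mathcal{I}_\varepsilon}=\partial_{\varphi_1}\mathcal{I}$; multiplying by $\partial_{I_1}\mathcal{K}_\varepsilon$ and using the $\partial_{\varphi_1}\mathcal{I}$ relation of \eqref{eq-derfunzimpl} yields $\dot{\widehat{I}}_1=-\partial_{\varphi_1}\mathcal{K}_\varepsilon$. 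That the energy equals $K^*$ is immediate, since by construction $\widehat{I}_1=\mathcal{I}(\varepsilon,\widehat{\varphi}_1,\widehat{\varphi}_2,\widehat{I}_2)$ and $\mathcal{I}$ is exactly the level-set function defining $\mathcal{K}_\varepsilon=K^*$.

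For periodicity, the key fact is that the integrand of $t_\varepsilon$ is $2\pi m_1$-periodic in $\varphi_1$: this uses the $2\pi$-periodicity of $\mathcal{K}_\varepsilon$ and $\mathcal{I}$ in $(\varphi_1,\varphi_2)$, the $2\pi m_1$-periodicity of $\widetilde{I}_2$, and the relation $\widetilde{\varphi}_2(\varphi_1+2\pi m_1)=\widetilde{\varphi}_2(\varphi_1)+2\pi m_2$ (which itself follows, for all $\varphi_1$, from the winding hypothesis at the endpoints and uniqueness for \eqref{eq-sisridotto}); these also give $\widetilde{\mathcal{I}_\varepsilon}(\varphi_1+2\pi m_1)=\widetilde{\mathcal{I}_\varepsilon}(\varphi_1)$. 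Consequently $t_\varepsilon$ satisfies the cocycle identity $t_\varepsilon(\varphi_1+2\pi m_1)=t_\varepsilon(\varphi_1)+T_\varepsilon$, which inverts to $\widetilde{\varphi}_1(t+T_\varepsilon)=\widetilde{\varphi}_1(t)+2\pi m_1$. Substituting into \eqref{eq-lem} shows that $\widehat{I}_1,\widehat{I}_2$ are $T_\varepsilon$-periodic while $\widehat{\varphi}_1,\widehat{\varphi}_2$ advance by $(2\pi m_1,2\pi m_2)$ over $T_\varepsilon$. Distinguishing the two signs of $\partial_{I_1}\mathcal{K}_0(I^*)$ (which governs the sign of $T_\varepsilon$ and the direction of the reparametrization), the positive period is $|T_\varepsilon|$ and the winding vector over it is $\mathrm{sgn}(\partial_{I_1}\mathcal{K}_0(I^*))(m_1,m_2)$; minimality of $|T_\varepsilon|$ follows from the strict monotonicity of $\widehat{\varphi}_1$ and the coprimality of $(m_1,m_2)$, which forces $2\pi m_1$ to be the minimal $\varphi_1$-period of the reduced orbit.

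Finally I would establish the distinctness transfer by contrapositive. If the reconstructed solutions fail to be distinct in the sense of \eqref{formula-distinti2}, there is a shift $\tau$ with $(\widehat{\varphi}^1(\cdot+\tau),\widehat{I}^1(\cdot+\tau))\equiv(\widehat{\varphi}^2,\widehat{I}^2)\pmod{2\pi\mathbb{Z}^2\times\{(0,0)\}}$. Reading the $\widehat{\varphi}_1$-component at $t=0$, where $\widehat{\varphi}_1^i(0)=0$, and using strict monotonicity pins down $\tau=t_\varepsilon^1(2\pi a)$ for some $a\in\{0,\ldots,m_1-1\}$; re-expressing the full identity in the reduced time $\varphi_1$ then yields $(\widetilde{\varphi}_2^1(\cdot+2\pi a),\widetilde{I}_2^1(\cdot+2\pi a))\equiv(\widetilde{\varphi}_2^2,\widetilde{I}_2^2)\pmod{2\pi\mathbb{Z}\times\{0\}}$, which for $a\in\{1,\ldots,m_1-1\}$ contradicts \eqref{formula-distinti3}, while $a=0$ forces $\tau=0$ and the two reduced solutions to coincide. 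I expect this last bookkeeping step — matching a time-translation of the full orbit with an integer $\varphi_1$-shift of the reduced orbit while tracking the sign of the reparametrization — to be the main obstacle, the verification that \eqref{eq-lem} solves \eqref{eq-sispertazioneangolo} being conceptually straightforward once the cancellation in the $\widehat{I}_1$-equation is noticed.
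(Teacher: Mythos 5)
Your proposal is correct and follows essentially the same route as the paper's proof: a chain-rule verification that \eqref{eq-lem} solves \eqref{eq-sispertazioneangolo} via the implicit-function identities \eqref{eq-derfunzimpl} (your noted cancellation in the $\widehat{I_1}$-equation is exactly what happens in the paper's computation), energy conservation by construction, periodicity and winding transferred through the reparametrization $t_{\varepsilon}$ using the $2\pi$-periodicity of $\mathcal{I}$ and the endpoint winding relation, minimality from coprimality, and distinctness by matching a time-shift of the full orbit to an integer $\varphi_1$-shift of the reduced orbit and invoking uniqueness. The only differences are cosmetic: you spell out the global invertibility of $t_{\varepsilon}$ and the cocycle identity $t_{\varepsilon}(\varphi_1+2\pi m_1)=t_{\varepsilon}(\varphi_1)+T_{\varepsilon}$, which the paper leaves implicit.
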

	
\begin{proof}
We first prove that the function $(\widehat{\varphi_1},\widehat{\varphi_2},\widehat{I_1},\widehat{I_2})$ defined in \eqref{eq-lem} is a solution of system \eqref{eq-sispertazioneangolo}. Accordingly, we first observe that
\begin{align*}
\dot{\widehat{\varphi_1}}(t) 
= \dot{\widetilde{\varphi_1}}(t) 
= \dfrac{1}{t'(\widetilde{\varphi_1}(t))}
&= \partial_{I_{1}} \mathcal{K}_{\varepsilon} (\widetilde{\varphi_1}(t),\widetilde{\varphi_2}(\widetilde{\varphi_1}(t)),\widetilde{\mathcal{I}_{\varepsilon}}(\widetilde{\varphi_1}(t)),\widetilde{I_2}(\widetilde{\varphi_1}(t)))
\\
&= \partial_{I_{1}} \mathcal{K}_{\varepsilon} (\widehat{\varphi_1}(t),\widehat{\varphi_2}(t),\widehat{I_1}(t),\widehat{I_2}(t)),
\end{align*}
for all $t$. Next, by exploiting the above formula and the implicit function theorem, for all $t$, we have
\begin{align*}
\dot{\widehat{\varphi_2}}(t) 
&= \widetilde{\varphi_2}'(\widetilde{\varphi_1}(t)) \dot{\widetilde{\varphi_1}}(t)
= - \partial_{I_{2}} \mathcal{I}(\varepsilon,\widetilde{\varphi_1}(t),\widetilde{\varphi_2}(\widetilde{\varphi_1}(t)),\widetilde{I_2}(\widetilde{\varphi_1}(t))) 
\cdot
\dot{\widetilde{\varphi_1}}(t)
\\
&= - \partial_{I_{2}} \mathcal{I}(\varepsilon,\widehat{\varphi_1}(t),\widehat{\varphi_2}(t),\widehat{I_2}(t)) 
\cdot
\partial_{I_{1}} \mathcal{K}_{\varepsilon} (\widehat{\varphi_1}(t),\widehat{\varphi_2}(t),\widehat{I_1}(t),\widehat{I_2}(t))
\\
&= \dfrac{\partial_{I_{2}} \mathcal{K}_{\varepsilon} (\varepsilon,\widehat{\varphi_1}(t),\widehat{\varphi_2}(t),\widehat{I_2}(t))}{\partial_{I_{1}} \mathcal{K}_{\varepsilon} (\widehat{\varphi_1}(t),\widehat{\varphi_2}(t),\widehat{I_1}(t),\widehat{I_2}(t))}
\cdot
\partial_{I_{1}} \mathcal{K}_{\varepsilon} (\widehat{\varphi_1}(t),\widehat{\varphi_2}(t),\widehat{I_1}(t),\widehat{I_2}(t))
\\
&= \partial_{I_{2}} \mathcal{K}_{\varepsilon} (\widehat{\varphi_1}(t),\widehat{\varphi_2}(t),\widehat{I_1}(t),\widehat{I_2}(t))
\end{align*}
and, analogously, 
\begin{align*}
\dot{\widehat{I_2}}(t) 
&= \widetilde{I_2}'(\widetilde{\varphi_1}(t)) \dot{\widetilde{\varphi_1}}(t)
= \partial_{\varphi_{2}} \mathcal{I}(\varepsilon,\widetilde{\varphi_1}(t),\widetilde{\varphi_2}(\widetilde{\varphi_1}(t)),\widetilde{I_2}(\widetilde{\varphi_1}(t))) 
\cdot
\dot{\widetilde{\varphi_1}}(t)
\\
&= \partial_{\varphi_{2}} \mathcal{I}(\varepsilon,\widehat{\varphi_1}(t),\widehat{\varphi_2}(t),\widehat{I_2}(t)) 
\cdot
\partial_{I_{1}} \mathcal{K}_{\varepsilon} (\widehat{\varphi_1}(t),\widehat{\varphi_2}(t),\widehat{I_1}(t),\widehat{I_2}(t))
\\
&= -\partial_{\varphi_{2}} \mathcal{K}_{\varepsilon} (\widehat{\varphi_1}(t),\widehat{\varphi_2}(t),\widehat{I_1}(t),\widehat{I_2}(t)).
\end{align*}
At last, from the above three formulas and the implicit function theorem, we deduce that
\begin{align*}
\dot{\widehat{I_1}}(t)  
&= \partial_{\varphi_1} \mathcal{I}(\varepsilon,\widetilde{\varphi_1}(t),\widetilde{\varphi_2}(\widetilde{\varphi_1}(t)),\widetilde{I_2}(\widetilde{\varphi_1}(t))) \cdot \dot{\widetilde{\varphi_1}}(t) 
\\
&\quad + \partial_{\varphi_2} \mathcal{I}(\varepsilon,\widetilde{\varphi_1}(t),\widetilde{\varphi_2}(\widetilde{\varphi_1}(t)),\widetilde{I_2}(\widetilde{\varphi_1}(t))) \cdot
\widetilde{\varphi_2}'(\widetilde{\varphi_1}(t)) \dot{\widetilde{\varphi_1}}(t)
\\
&\quad + \partial_{I_2} \mathcal{I}(\varepsilon,\widetilde{\varphi_1}(t),\widetilde{\varphi_2}(\widetilde{\varphi_1}(t)),\widetilde{I_2}(\widetilde{\varphi_1}(t))) \cdot
\widetilde{I_2}'(\widetilde{\varphi_1}(t)) \dot{\widetilde{\varphi_1}}(t)
\\
&= -\partial_{\varphi_{1}} \mathcal{K}_{\varepsilon} (\widehat{\varphi_1}(t),\widehat{\varphi_2}(t),\widehat{I_1}(t),\widehat{I_2}(t)),
\end{align*}
for all $t$. We have thus proved that $(\widehat{\varphi_1},\widehat{\varphi_2},\widehat{I_1},\widehat{I_2})$ is a solution of system \eqref{eq-sispertazioneangolo}, as claimed; 
moreover, its energy is
\begin{equation*}
\mathcal{K}_{\varepsilon} (\widehat{\varphi_1}(t),\widehat{\varphi_2}(t),\widehat{I_1}(t),\widehat{I_2}(t))
= \mathcal{K}_{\varepsilon} (\widetilde{\varphi_1}(t),\widetilde{\varphi_2}(\widetilde{\varphi_1}(t)),\widetilde{\mathcal{I}_{\varepsilon}}(\widetilde{\varphi_1}(t)),\widetilde{I_2}(\widetilde{\varphi_1}(t))) = K^{*},
\end{equation*}
by the implicit function theorem.

We now prove that $(\widehat{\varphi_1},\widehat{\varphi_2},\widehat{I_1},\widehat{I_2})$ is $|T_{\varepsilon}|$-periodic and have $\mathrm{sgn}(\partial_{I_1}\mathcal{K}_0(I^*)) (m_1,m_2)$ as winding vector on the period $|T_{\varepsilon}|$. By hypothesis we know that
\begin{equation}\label{eq-per-proof}
\bigl{(} \widetilde{\varphi_2}(2\pi m_{1}), \widetilde{I_2}(2\pi m_{1}) \bigr{)}
= \bigl{(} \widetilde{\varphi_2}(0) + 2\pi m_{2}, \widetilde{I_2}(0) \bigr{)}
\end{equation}
Recalling that $T_{\varepsilon} ={t}_{\varepsilon}(2\pi m_1)$ and so $\widetilde{\varphi_1}(T_{\varepsilon})=2\pi m_{1}$, we thus find that
\begin{align*}
\bigl{(}\widehat{\varphi_1}(T_{\varepsilon}),\widehat{\varphi_2}(T_{\varepsilon}),\widehat{I_1}(T_{\varepsilon}),\widehat{I_2}(T_{\varepsilon})\bigr{)}
&= 
\bigl{(}\widetilde{\varphi_1}(T_{\varepsilon}),\widetilde{\varphi_2}(\widetilde{\varphi_1}(T_{\varepsilon})),\widetilde{\mathcal{I}_{\varepsilon}}(\widetilde{\varphi_1}(T_{\varepsilon})),\widetilde{I_2}(\widetilde{\varphi_1}(T_{\varepsilon}))\bigr{)}
\\
&= 
\bigl{(}2\pi m_{1},\widetilde{\varphi_2}(2\pi m_{1}),\widetilde{\mathcal{I}_{\varepsilon}}(2\pi m_{1}),\widetilde{I_2}(2\pi m_{1})\bigr{)}
\\
&= 
\bigl{(}2\pi m_{1},\widetilde{\varphi_2}(0)+2\pi m_{2},\widetilde{\mathcal{I}_{\varepsilon}}(0),\widetilde{I_2}(0)\bigr{)},
\end{align*}
where in the last equality we have exploited \eqref{eq-per-proof} and the fact that
\begin{align*}
\widetilde{\mathcal{I}_{\varepsilon}}(2\pi m_{1}) 
&=
\mathcal{I}(\varepsilon,2\pi m_{1}, \widetilde{\varphi_2} (2\pi m_{1}),\widetilde{I_2}(2\pi m_{1}))
\\
&=
\mathcal{I}(\varepsilon,2\pi m_{1}, \widetilde{\varphi_2} (0) + 2\pi m_{2},\widetilde{I_2}(0))
=
\widetilde{\mathcal{I}_{\varepsilon}}(0),
\end{align*}
since $\mathcal{I}$ is $2\pi$-periodic in each of the variables $\varphi_1$ and $\varphi_2$.
Recalling that 
\begin{equation*}
\bigl{(}\widehat{\varphi_1}(0),\widehat{\varphi_2}(0),\widehat{I_1}(0),\widehat{I_2}(0)\bigr{)}= \bigl{(}0,\widetilde{\varphi_2}(0),\widetilde{\mathcal{I}_{\varepsilon}}(0),\widetilde{I_2}(0)\bigr{)},
\end{equation*}
we thus have
\begin{equation*}
\bigl{(}\widehat{\varphi_1}(T_{\varepsilon}),\widehat{\varphi_2}(T_{\varepsilon}),\widehat{I_1}(T_{\varepsilon}),\widehat{I_2}(T_{\varepsilon})\bigr{)} = \bigl{(}\widehat{\varphi_1}(0)+2\pi m_1,\widehat{\varphi_2}(0)+2\pi m_2,\widehat{I_1}(0),\widehat{I_2}(0)\bigr{)}.
\end{equation*}
Taking into account that $\mathrm{sgn}(T_\varepsilon) = \mathrm{sgn}(\partial_{I_1}\mathcal{K}_0(I^*)) (m_1,m_2)$, this means that the solution
$(\widehat{\varphi_1},\widehat{\varphi_2},\widehat{I_1},\widehat{I_2})$ is $|T_{\varepsilon}|$-periodic with $\mathrm{sgn}(\partial_{I_1}\mathcal{K}_0(I^*)) (m_1,m_2)$ as winding vector on the period $|T_{\varepsilon}|$. Since the integers $m_1$ and $m_2$ are coprime, 
$|T_{\varepsilon}|$ is thus the minimal period.

Finally, let $(\widetilde{\varphi_2}^1,\widetilde{I_2}^1)$ and $(\widetilde{\varphi_2}^2,\widetilde{I_2}^2)$ be distinct solutions of \eqref{eq-sisridotto} in the sense of \eqref{formula-distinti3}. We prove that the corresponding solutions $(\widehat{\varphi}^1,\widehat{I}^1)$ and $(\widehat{\varphi}^2,\widehat{I}^2)$ of \eqref{eq-sispertazioneangolo} are distinct in the sense of \eqref{formula-distinti2}.
By contradiction, we assume that there exist $\tau\in\mathopen{[}0,T_{\varepsilon}\mathclose{[}$ and $k_{1},k_{2}\in\mathbb{Z}$ such that
\begin{align*}
&\bigl{(} \widehat{\varphi_{1}}^1(t+\tau),\widehat{\varphi_{2}}^1(t+\tau),\widehat{I_{1}}^1(t+\tau),\widehat{I_{2}}^1(t+\tau) \bigr{)}=
\\
&=
\bigl{(} \widehat{\varphi_{1}}^2(t)+2\pi k_{1},\widehat{\varphi_{2}}^2(t)+2\pi k_{2},\widehat{I_{1}}^2(t),\widehat{I_{2}}^2(t)\bigr{)}, \quad \text{for all $t\in\mathbb{R}$.}
\end{align*}
Therefore, recalling \eqref{eq-lem}, it is easy to see that the functions $(\widetilde{\varphi_{2}}^1(\cdot),\widetilde{I_{2}}^1(\cdot))$ and $(\widetilde{\varphi_{2}}^2(\cdot-2\pi k_{1})+2\pi k_{2},\widetilde{I_{2}}^2(\cdot-2\pi k_{1}))$ satisfy the same initial condition at $t=2\pi k_1$. Since they are both solutions of \eqref{eq-sisridotto}, they must coincide for all $t \in \mathbb{R}$ and thus they are not distinct solutions of \eqref{eq-sisridotto} in the sense of \eqref{formula-distinti3}, a contradiction.
\end{proof}
	
In order to find $2\pi m_1$-periodic solutions of the Hamiltonian system \eqref{eq-sisridotto}, we are going to apply Theorem~\ref{KAMePB} to its Poincar\'e map at time $T = 2\pi$, that is, to the map
\begin{equation}\label{defmapM}
M_{\varepsilon}(\varphi_2,I_2) = (\widetilde{\varphi_2}(2\pi;\varepsilon,\varphi_2,I_2),\widetilde{I_2}(2\pi;\varepsilon,\varphi_2,I_2)), 
\end{equation}
where the notation $(\widetilde{\varphi_2}(\cdot;\varepsilon,\varphi_2,I_2),\widetilde{I_2}(\cdot;\varepsilon,\varphi_2,I_2))$
stands for the unique solution of system \eqref{eq-sisridotto} satisfying the initial condition
$(\widetilde{\varphi_2}(0),\widetilde{I_2}(0))= (\varphi_2,I_2)$.  
As well known, a $(m_1,m_2)$-periodic point of $M_{\varepsilon}$ corresponds to (the initial conditions of) a $2\pi m_1$-periodic solution of
\eqref{eq-sisridotto} with winding number $m_2$; moreover, two periodic points are distinct in the sense of \eqref{formula-distinti1} if and only if the corresponding periodic solutions are distinct in the sense of \eqref{formula-distinti3}.

For $\varepsilon = 0$, the map $M_{\varepsilon}$ has an explicit expression. Indeed, recalling \eqref{eq-I0} we find that
\begin{equation*}
(\widetilde{\varphi_2}(t;0,\varphi_2,I_2),\widetilde{I_2}(t;0,\varepsilon,\varphi_2,I_2)) = (\varphi_2- t \,\mathcal{I}_0'(I_2),I_2)
\end{equation*}
and so
\begin{equation}\label{defM0}
M_0(\varphi_2,I_2) = (\varphi_2 + \alpha(I_2),I_2),
\end{equation}
where
\begin{equation*}
\alpha(I_2) = -  2\pi\,\mathcal{I}_0'(I_2).
\end{equation*}
Let us notice that the expression is valid as soon as the function $\mathcal{I}_0$ is well-defined, that is, 
for $(\varphi_2,I_2) \in \mathbb{R} \times [a,b]$. By standard continuous dependence argument (taking into account the $2\pi$-periodicity with respect to $\varphi_1$ of the function $\mathcal{K}^{\rm red}_{\varepsilon}$), we thus have that, up to shrinking $\bar{\varepsilon}$ if necessary, the map $M_{\varepsilon}$ is well-defined on the strip $\mathbb{R} \times [a,b]$ for $\vert \varepsilon \vert \leq \bar{\varepsilon}$.

As first step for the application of Theorem~\ref{KAMePB}, we now prove the following result.
	
\begin{lemma}\label{lem1}
For every $\varepsilon \in \mathbb{R}$ with $\vert \varepsilon \vert \leq \bar{\varepsilon}$, the map $M_{\varepsilon} \colon \mathbb{R} \times [a,b] \to \mathbb{R}^2$ is exact-symplectic, of class $\mathcal{C}^4$ and of the form
\begin{equation*}
M_{\varepsilon}(\varphi_2,I_2) = (\varphi_2 + \alpha(I_2) + \mu_1(\varphi_2,I_2;\varepsilon), I_2 + \mu_1(\varphi_2,I_2;\varepsilon)),
\end{equation*}
where $\mu_1,\mu_2$ are $2\pi$-periodic in $\theta$ and satisfy $\mu_1(\varphi_2,I_2;0) = \mu_1(\varphi_2,I_2;0) = 0$ as well as  
\begin{equation}\label{stimamu}
\Vert \mu_1(\cdot,\cdot; \varepsilon) \Vert_{\mathcal{C}^4} + \Vert \mu_2(\cdot,\cdot;\varepsilon) \Vert_{\mathcal{C}^4} \to 0, \quad \text{as $\varepsilon \to 0$.}
\end{equation}
\end{lemma}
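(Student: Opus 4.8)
The plan is to read all three assertions off the fact that $M_\varepsilon$ is the time-$2\pi$ Poincar\'e map of the non-autonomous one-degree-of-freedom Hamiltonian system \eqref{eq-sisridotto}, whose Hamiltonian is $\mathcal{K}^{\rm red}_\varepsilon = -\mathcal{I}$ with conjugate variables $(\varphi_2,I_2)$ and with $\varphi_1$ in the role of time. Indeed, \eqref{eq-sisridotto} is exactly Hamilton's equations $\varphi_2' = \partial_{I_2}\mathcal{K}^{\rm red}_\varepsilon$, $I_2' = -\partial_{\varphi_2}\mathcal{K}^{\rm red}_\varepsilon$, so the general theory of Hamiltonian flows applies. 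I would treat regularity and the structural form \eqref{forma} first, and then devote the main effort to exact symplecticity.

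\emph{Regularity and form.} Since $\mathcal{I}\in\mathcal{C}^6$, the right-hand side of \eqref{eq-sisridotto}, built from $\partial_{\varphi_2}\mathcal{I}$ and $\partial_{I_2}\mathcal{I}$, is of class $\mathcal{C}^5$ jointly in $(\varphi_1,\varphi_2,I_2,\varepsilon)$. By the classical theorem on smooth dependence of solutions of ODEs on initial data and parameters, the map $(\varphi_2,I_2,\varepsilon)\mapsto M_\varepsilon(\varphi_2,I_2)$ is then of class $\mathcal{C}^5$, hence $\mathcal{C}^4$, on $\mathbb{R}\times[a,b]\times[-\bar\varepsilon,\bar\varepsilon]$. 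Setting $\mu_1 := \widetilde{\varphi_2}(2\pi;\varepsilon,\cdot,\cdot) - \varphi_2 - \alpha(I_2)$ and $\mu_2 := \widetilde{I_2}(2\pi;\varepsilon,\cdot,\cdot) - I_2$, with $\alpha$ as in \eqref{defM0}, immediately yields the form \eqref{forma}; the $2\pi$-periodicity in $\varphi_2$ is inherited from that of $\mathcal{I}$, and the identities $\mu_i(\cdot,\cdot;0)\equiv 0$ are precisely the content of the explicit expression \eqref{defM0} of $M_0$. Finally, restricting this joint $\mathcal{C}^4$-map to the compact fundamental domain $[0,2\pi]\times[a,b]\times[-\bar\varepsilon,\bar\varepsilon]$ and extending by periodicity in $\varphi_2$, one obtains that all partial derivatives of $\mu_i$ up to order $4$ converge uniformly, as $\varepsilon\to 0$, to their values at $\varepsilon=0$, namely to $0$; this is exactly \eqref{stimamu}.

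\emph{Exact symplecticity.} This is where the real content lies. I would introduce the action along trajectories
\begin{equation*}
V_\varepsilon(\varphi_2,I_2) = \int_0^{2\pi} \Bigl[\, \widetilde{I_2}(s)\,\widetilde{\varphi_2}'(s) - \mathcal{K}^{\rm red}_\varepsilon\bigl(s,\widetilde{\varphi_2}(s),\widetilde{I_2}(s)\bigr) \,\Bigr]\, \mathrm{d}s,
\end{equation*}
where $(\widetilde{\varphi_2}(\cdot),\widetilde{I_2}(\cdot))$ denotes the solution of \eqref{eq-sisridotto} with datum $(\widetilde{\varphi_2}(0),\widetilde{I_2}(0)) = (\varphi_2,I_2)$. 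Taking the first variation with respect to the initial datum and integrating by parts the term $\widetilde{I_2}\,\delta\widetilde{\varphi_2}'$, the interior integral cancels because $(\widetilde{\varphi_2},\widetilde{I_2})$ solves Hamilton's equations, and only the boundary contribution $\bigl[\,\widetilde{I_2}\,\delta\widetilde{\varphi_2}\,\bigr]_0^{2\pi}$ survives; this gives precisely $\mathrm{d}V_\varepsilon = M_\varepsilon^*\lambda - \lambda$ with $\lambda = I_2\,\mathrm{d}\varphi_2 = r\,\mathrm{d}\theta$, i.e.\ \eqref{exact}. The generating function $V_\varepsilon$ is $2\pi$-periodic in $\varphi_2$, since the integrand depends on $\varphi_2$ only through $\widetilde{\varphi_2}'$ and through the $\varphi_2$-periodic Hamiltonian $\mathcal{K}^{\rm red}_\varepsilon$, and it is of class $\mathcal{C}^4$ by the regularity already established.

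\emph{Main obstacle.} The genuinely delicate point is this exact-symplecticity computation, where one must organize the first variation of $V_\varepsilon$ so that the surviving boundary term reproduces $M_\varepsilon^*\lambda - \lambda$ with the correct sign and with $\lambda = r\,\mathrm{d}\theta$; this hinges on having written \eqref{eq-sisridotto} in the standard Hamiltonian form associated with $\mathcal{K}^{\rm red}_\varepsilon = -\mathcal{I}$, a point that must be double-checked against the sign conventions in \eqref{eq-defkridotta}. Everything else follows from standard ODE smooth-dependence theory together with the explicit form \eqref{defM0} of $M_0$.
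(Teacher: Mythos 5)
Your proof is correct, and its skeleton coincides with the paper's: both treat $M_\varepsilon$ as the time-$2\pi$ Poincar\'e map of the reduced system \eqref{eq-sisridotto}, get joint $\mathcal{C}^5$ regularity of $(\varepsilon,\varphi_2,I_2)\mapsto M_\varepsilon(\varphi_2,I_2)$ from smooth dependence of ODE flows (the Hamiltonian being $\mathcal{C}^6$), and read off the form \eqref{forma}, the periodicity, and $\mu_i(\cdot,\cdot;0)\equiv 0$ by comparison with the explicit unperturbed map \eqref{defM0}. You diverge at exactly the two nontrivial steps, in both cases legitimately. For exact symplecticity, the paper does not compute anything: it cites \cite[Proposition~2.4]{BoOr-14}, the known fact that the Poincar\'e map of a planar Hamiltonian system periodic in time and in the position variable is exact symplectic. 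You instead reprove this via the action functional $V_\varepsilon=\int_0^{2\pi}\bigl[\widetilde{I_2}\,\widetilde{\varphi_2}'-\mathcal{K}^{\rm red}_\varepsilon\bigr]\,\mathrm{d}s$, and your first variation is the standard proof of that cited proposition; the sign check you flag as the main obstacle does pass, since with $\mathcal{K}^{\rm red}_\varepsilon=-\mathcal{I}$ system \eqref{eq-sisridotto} is in canonical form with $(\varphi_2,I_2)$ as position/momentum, so the surviving boundary term $\bigl[\widetilde{I_2}\,\delta\widetilde{\varphi_2}\bigr]_0^{2\pi}$ gives precisely $\mathrm{d}V_\varepsilon=M_\varepsilon^*\lambda-\lambda$ with $\lambda=I_2\,\mathrm{d}\varphi_2$, i.e.\ \eqref{exact}, and the $2\pi$-periodicity of $V_\varepsilon$ in $\varphi_2$ follows as you say from the equivariance of solutions under $\varphi_2\mapsto\varphi_2+2\pi$. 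For \eqref{stimamu}, the paper writes $(\mu_1,\mu_2)=\widehat M(\varepsilon,\cdot,\cdot)-\widehat M(0,\cdot,\cdot)=\varepsilon\int_0^1\partial_\varepsilon\widehat M(\varepsilon s,\cdot,\cdot)\,\mathrm{d}s$ and bounds $\Vert\partial_\varepsilon\widehat M\Vert_{\mathcal{C}^4}$ uniformly, obtaining an $O(\varepsilon)$ rate; you instead use uniform continuity of the derivatives up to order $4$ on the compact fundamental domain $[0,2\pi]\times[a,b]\times[-\bar\varepsilon,\bar\varepsilon]$, extended by periodicity. Your variant is marginally more elementary (joint $\mathcal{C}^4$ regularity suffices, one $\varepsilon$-derivative less) but yields only convergence without a rate; since \eqref{stimamu} asks for nothing more, both arguments are adequate, the paper's buying a Lipschitz-in-$\varepsilon$ estimate that is never used. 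One point you rely on tacitly, settled in the paper just before the lemma, is that after shrinking $\bar\varepsilon$ the solutions of \eqref{eq-sisridotto} starting in $\mathbb{R}\times[a,b]$ exist on the whole interval $[0,2\pi]$, so that $M_\varepsilon$ and your action integral are well defined on the full strip.
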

	
\begin{proof}
The fact that $M_{\varepsilon}$ is exact-symplectic is a well-known consequence of the fact that the Hamiltonian function \eqref{eq-defkridotta} of system \eqref{eq-sisridotto} is periodic in the variable $\varphi_2$ (see for instance \cite[Proposition~2.4]{BoOr-14}).
This also implies that $\mu_1$ and $\mu_2$ are $2\pi$-periodic in $\theta$; moreover, \eqref{defM0} yields $\mu_1(\varphi_2,I_2,0) = \mu_1(\varphi_2,I_2,0) = 0$. Thus, it remains to shows that $M_{\varepsilon}$ is of class $\mathcal{C}^4$ and that \eqref{stimamu} holds true.
	
To this end, we observe that, since the Hamiltonian is of class $\mathcal{C}^6$ (and, thus, the Hamiltonian vector field of class $\mathcal{C}^5$), from the standard theory of ordinary differential equations we know that the map
\begin{equation*}
[-\bar{\varepsilon},\bar{\varepsilon}] \times \mathbb{R} \times [a,b] \ni (\varepsilon,\varphi_2,I_2) \mapsto \widehat M(\varepsilon,\varphi_2,I_2):= M_{\varepsilon}(\varphi_2,I_2) 
\end{equation*}
is of class $\mathcal{C}^5$. This implies on one hand that, for every fixed $\varepsilon$, the map $M_{\varepsilon}$ is of class 
$\mathcal{C}^5$ (and, thus, of class $\mathcal{C}^4$ as required). On the other hand, it guarantees that the map
$\frac{\partial \widehat M}{\partial \varepsilon}$ is of class $\mathcal{C}^4$, and thus there is a constant $C > 0$ such that
\begin{equation*}
\left\Vert \frac{\partial \widehat M}{\partial \varepsilon}(\varepsilon,\cdot,\cdot) \right\Vert_{\mathcal{C}^4([0,2\pi] \times [a,b])} \leq C, \quad \text{for every $\vert \varepsilon \vert \leq \bar{\varepsilon}$.}
\end{equation*}
Since, for every $(\varphi_2,I_2) \in [0,2\pi] \times [a,b]$, it holds that
\begin{equation*}
(\mu_1(\varphi_2,I_2;\varepsilon),\mu_2(\varphi_2,I_2;\varepsilon)) = \widehat M(\varepsilon,\varphi_2,I_2) - \widehat M(0,\varphi_2,I_2)
=  \varepsilon \int_0^1 \frac{\partial \widehat M}{\partial \varepsilon}(\varepsilon s,\varphi_2,I_2)\,\mathrm{d}s,
\end{equation*}
condition \eqref{stimamu} directly follows (cf.~\cite[Proposition~6.4]{Or-01} for a similar argument).
\end{proof}
	
Now we turn to the proof of the validity of the conditions \eqref{condizione1} and \eqref{condizione2} of Theorem~\ref{KAMePB}.
	
\begin{lemma}\label{lem2}
It holds that
\begin{equation*}
\alpha(I_2^*) = 2\pi \frac{n_2}{n_1} \quad \text{ and } \quad \alpha'(I_2^*) \neq 0.
\end{equation*}
\end{lemma}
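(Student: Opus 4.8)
The plan is to read off both assertions directly from the definition $\alpha(I_2)=-2\pi\,\mathcal{I}_0'(I_2)$ together with the defining relation $\mathcal{K}_0(\mathcal{I}_0(I_2),I_2)\equiv K^*$ of the implicit function $\mathcal{I}_0$, by computing its first two derivatives at $I_2^*$ and matching the second one against the non-degeneracy hypothesis.

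First I would differentiate the identity $\mathcal{K}_0(\mathcal{I}_0(I_2),I_2)=K^*$ once, obtaining
\[
\partial_{I_1}\mathcal{K}_0(\mathcal{I}_0(I_2),I_2)\,\mathcal{I}_0'(I_2)+\partial_{I_2}\mathcal{K}_0(\mathcal{I}_0(I_2),I_2)=0,
\]
so that, evaluating at $I_2=I_2^*$ and recalling \eqref{eq-I0*}, i.e.\ $\mathcal{I}_0(I_2^*)=I_1^*$,
\[
\mathcal{I}_0'(I_2^*)=-\frac{\partial_{I_2}\mathcal{K}_0(I^*)}{\partial_{I_1}\mathcal{K}_0(I^*)}=-\frac{n_2}{n_1},
\]
where the last equality is precisely the resonance assumption \eqref{eq-ipoazioneangolo}. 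Hence $\alpha(I_2^*)=-2\pi\,\mathcal{I}_0'(I_2^*)=2\pi\,n_2/n_1$, which is the first claim.

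For the second claim I would differentiate the same identity once more. Using the chain rule and the symmetry of the Hessian, this gives
\[
\partial_{I_1 I_1}\mathcal{K}_0\,(\mathcal{I}_0')^2+2\,\partial_{I_1 I_2}\mathcal{K}_0\,\mathcal{I}_0'+\partial_{I_2 I_2}\mathcal{K}_0+\partial_{I_1}\mathcal{K}_0\,\mathcal{I}_0''=0,
\]
all derivatives of $\mathcal{K}_0$ being evaluated along $(\mathcal{I}_0(I_2),I_2)$. Solving for $\mathcal{I}_0''$, evaluating at $I_2^*$, and inserting $\mathcal{I}_0'(I_2^*)=-\partial_{I_2}\mathcal{K}_0(I^*)/\partial_{I_1}\mathcal{K}_0(I^*)$, a short computation shows that
\[
\mathcal{I}_0''(I_2^*)=-\frac{\langle \nabla^2\mathcal{K}_0(I^*)\,v,v\rangle}{\partial_{I_1}\mathcal{K}_0(I^*)^3},\qquad v=(-\partial_{I_2}\mathcal{K}_0(I^*),\partial_{I_1}\mathcal{K}_0(I^*)),
\]
the crucial point being that the numerator produced after clearing the denominator $\partial_{I_1}\mathcal{K}_0(I^*)^2$, namely $\partial_{I_1 I_1}\mathcal{K}_0\,(\partial_{I_2}\mathcal{K}_0)^2-2\,\partial_{I_1 I_2}\mathcal{K}_0\,\partial_{I_1}\mathcal{K}_0\,\partial_{I_2}\mathcal{K}_0+\partial_{I_2 I_2}\mathcal{K}_0\,(\partial_{I_1}\mathcal{K}_0)^2$, is exactly the quadratic form $\langle \nabla^2\mathcal{K}_0(I^*)\,v,v\rangle$ appearing in \eqref{eq-iponondegenere}.

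Therefore $\alpha'(I_2^*)=-2\pi\,\mathcal{I}_0''(I_2^*)=2\pi\,\langle \nabla^2\mathcal{K}_0(I^*)\,v,v\rangle/\partial_{I_1}\mathcal{K}_0(I^*)^3$. Since $\partial_{I_1}\mathcal{K}_0(I^*)\neq0$ by hypothesis and the numerator is nonzero by the non-degeneracy condition \eqref{eq-iponondegenere}, we conclude $\alpha'(I_2^*)\neq0$, as desired. I expect the only delicate point to be the bookkeeping in the second differentiation: one must verify that, after substituting the value of $\mathcal{I}_0'(I_2^*)$ and clearing denominators, the expression reproduces verbatim the quadratic form in \eqref{eq-iponondegenere}; everything else is a routine application of the implicit function theorem already used to define $\mathcal{I}_0$.
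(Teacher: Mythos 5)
Your proposal is correct and follows essentially the same route as the paper's proof: implicit differentiation of the energy identity defining $\mathcal{I}_0$, substitution of $\mathcal{I}_0'(I_2^*)=-\partial_{I_2}\mathcal{K}_0(I^*)/\partial_{I_1}\mathcal{K}_0(I^*)$ (using \eqref{eq-ipoazioneangolo} and $\mathcal{I}_0(I_2^*)=I_1^*$), and identification of the resulting numerator with the quadratic form in \eqref{eq-iponondegenere}, yielding $\alpha'(I_2^*)=2\pi\,\langle \nabla^2\mathcal{K}_0(I^*)v,v\rangle/(\partial_{I_1}\mathcal{K}_0(I^*))^3\neq 0$. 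The only cosmetic difference is that the paper differentiates the explicit quotient formula \eqref{eq-prima} for $\mathcal{I}_0'$, whereas you differentiate the implicit identity a second time and solve for $\mathcal{I}_0''$ --- an equivalent computation giving the same formulas.
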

					
\begin{proof}
From \eqref{eq-derfunzimpl} for $\varepsilon = 0$ we immediately obtain
\begin{equation}\label{eq-prima}
\mathcal{I}_0'(I_2) = - \frac{\partial_{I_2}\mathcal{K}_0(\mathcal{I}_0(I_2),I_2)}{\partial_{I_1}\mathcal{K}_0(\mathcal{I}_0(I_2),I_2)}.
\end{equation}
Therefore, recalling \eqref{eq-I0*} and \eqref{eq-ipoazioneangolo}, we find
\begin{equation*}
\alpha(I_2^*) = - 2\pi \mathcal{I}_0'(I_2^*) = 2\pi \frac{\partial_{I_2}
\mathcal{K}_0(I^*)}{\partial_{I_1}\mathcal{K}_0(I^*)} = 2\pi \frac{n_2}{n_1}.
\end{equation*}
On the other hand, differentiating \eqref{eq-prima} we obtain
\begin{align*}
\mathcal{I}_0''(I_2) & = - 
\dfrac{\left( \partial^2_{I_1 I_2}\mathcal{K}_0(\mathcal{I}_0(I_2),I_2) \mathcal{I}_0'(I_2) + \partial^2_{I_2 I_2}\mathcal{K}_0(\mathcal{I}_0(I_2),I_2) \right)}{(\partial_{I_1}\mathcal{K}_0(\mathcal{I}_0(I_2),I_2))^2} \, \partial_{I_1}\mathcal{K}_0(\mathcal{I}_0(I_2),I_2)
\\
& \quad + \dfrac{\left( \partial^2_{I_1 I_1}\mathcal{K}_0(\mathcal{I}_0(I_2),I_2) \mathcal{I}_0'(I_2) + \partial^2_{I_1 I_2}\mathcal{K}_0(\mathcal{I}_0(I_2),I_2) \right)}{(\partial_{I_1}\mathcal{K}_0(\mathcal{I}_0(I_2),I_2))^2} \, \partial_{I_2}\mathcal{K}_0(\mathcal{I}_0(I_2),I_2). 
\end{align*}
Plugging \eqref{eq-prima} in the above formula,  we have
\begin{align*}
\mathcal{I}_0''(I_2) &= -\frac{1}{(\partial_{I_1}\mathcal{K}_0)^3} \biggl{[} \partial^2_{I_1 I_1}\mathcal{K}_0 \cdot (\partial_{I_2}\mathcal{K}_{0})^2 - 2\partial^2_{I_1 I_2}\mathcal{K}_0 \cdot \partial_{I_1}\mathcal{K}_0  \cdot \partial_{I_2}\mathcal{K}_0 +\partial^2_{I_2 I_2}\mathcal{K}_0 \cdot (\partial_{I_1}\mathcal{K}_0)^2 \biggr{]}
\\
& = -\frac{1}{(\partial_{I_1}\mathcal{K}_0)^3} \langle \nabla^2 \mathcal{K}_0 (-\partial_{I_2}\mathcal{K}_0, \partial_{I_1}\mathcal{K}_0), (-\partial_{I_2}\mathcal{K}_0, \partial_{I_1}\mathcal{K}_0) \rangle,
\end{align*}
where we agree, for simplicity of notation, that the right-hand side is evaluated at $(\mathcal{I}_0(I_2),I_2)$.
For $I_2 =I_2^*$, recalling \eqref{eq-I0*} and \eqref{eq-iponondegenere} we thus find
\begin{align*}
\alpha'(I_2^*) &= \frac{2\pi}{(\partial_{I_1}\mathcal{K}_0(I^*))^3} \langle \nabla^2 \mathcal{K}_0(I^*) (-\partial_{I_2}\mathcal{K}_0(I^*), \partial_{I_1}\mathcal{K}_0(I^*)), (-\partial_{I_2}\mathcal{K}_0(I^*), \partial_{I_1}\mathcal{K}_0(I^*)) \rangle 
\\
&\neq 0,
\end{align*}
thus concluding the proof.
\end{proof}

\begin{remark}\label{rem-alternativa}
Let us point out that from the proof of Lemma~\ref{lem2} it follows that the non-degeneracy condition \eqref{eq-iponondegenere} is equivalent to the fact that the function
\begin{equation*}
I_2 \mapsto \frac{\partial_{I_2}\mathcal{K}_0(\mathcal{I}_0(I_2),I_2)}{\partial_{I_1}\mathcal{K}_0(\mathcal{I}_0(I_2),I_2)}
\end{equation*}
has non-zero derivative at $I_2^*$ (and, thus, is a local diffeomorphism). We will use this observation later on.
\hfill$\lhd$
\end{remark}

We are now in a position to summarize the whole discussion so as to give the proof of Theorem~\ref{teo-pbapplicato}.

\begin{proof}[Proof of Theorem~\ref{teo-pbapplicato}]
In view of Lemma~\ref{lem1} and Lemma~\ref{lem2}, we can apply Theorem~\ref{KAMePB} to the map $M_{\varepsilon}$
defined in \eqref{defmapM}. Hence, we find $\varepsilon^* > 0$ and $\delta^* > 0$ such that, for every pair of coprime integers $(m_1,m_2) \in (\mathbb{N}\setminus\{0\}) \times \mathbb{Z}$ satisfying
\begin{equation*}
\left\vert \frac{m_2}{m_1} - \frac{n_2}{n_1}\right \vert < \delta^*,
\end{equation*}
and for every $\varepsilon$ satisfying $\vert\varepsilon \vert < \varepsilon^*$, the map $M_{\varepsilon}$ has at least two
distinct $(m_1,m_2)$-periodic points. These two distinct periodic points correspond to two distinct $2\pi m_1$-periodic solutions of
\eqref{eq-sisridotto} with winding number $m_2$. Hence, by Lemma~\ref{lem-solridotto}, we find two distinct periodic solutions of \eqref{eq-sispertazioneangolo}, having energy $K^*$ and winding vector 
\begin{equation*}
(l_1,l_2) = \mathrm{sgn}(\partial_{I_1}\mathcal{K}_0(I^*)) (m_1,m_2).
\end{equation*}
Thus, Theorem~\ref{teo-pbapplicato} is proved.
\end{proof}		
	
\begin{remark}\label{rem2}
According to the observation in Remark~\ref{rem-alternativa}, for any pair of coprime integers $(m_1,m_2) \in (\mathbb{N}\setminus\{0\}) \times \mathbb{Z}$ satisfying \eqref{hpm1} there exists a unique value $I_2$ in a neighborhood of $I_2^*$ such that
\begin{equation*}
\frac{\partial_{I_2}\mathcal{K}_0(\mathcal{I}_0(I_2),I_2)}{\partial_{I_1}\mathcal{K}_0(\mathcal{I}_0(I_2),I_2)} = \frac{m_2}{m_1}.
\end{equation*}
It is easily proved that, for $\varepsilon \to 0$, the two periodic solutions with winding vector $(l_1,l_2) = \mathrm{sgn}(\partial_{I_1}\mathcal{K}_0(I^*)) (m_1,m_2)$ converge to the invariant torus $(\mathcal{I}_0(I_2),I_2)$ of the unperturbed problem; moreover, their minimal periods converge to $\vert 2\pi l_1/\partial_{I_1}\mathcal{K}_0(I^*) \vert$. Notice, in particular, that the minimal periods of these solutions can be arbitrarily large. 
\hfill$\lhd$
\end{remark}

\section{The main result: bifurcation of closed orbits via time maps}\label{section-main}

In this section we investigate bifurcation of closed orbits for a specific class of Hamiltonian systems with two degrees of freedom;
by passing to action-angle variables, such systems will be converted into systems of the type \eqref{eq-sispertazioneangolo} in order to make possible the use of the abstract result of Section~\ref{section-2}.

More precisely, let us consider a planar motion identified by a position $x\in \mathbb{R}^2\setminus \{0\}$, written in polar coordinates as $x=re^{i\vartheta}$, with $r>0$. We deal with perturbed Hamiltonians of the type
\begin{equation}\label{eq-hamper}
\mathcal{H}_{\varepsilon}(r,\vartheta,p_r,p_\vartheta) = \mathcal{H}_0(r,\vartheta,p_r,p_\vartheta) + \varepsilon \widetilde{\mathcal{H}} (\varepsilon,r,\vartheta,p_r,p_\vartheta),
\end{equation}
where $\varepsilon$ is a small real parameter, $\widetilde{\mathcal{H}}\colon (-\hat{\varepsilon},\hat{\varepsilon}) \times \Xi \times \mathbb{R} \times \mathbb{R}^2 \to \mathbb{R}$, with $\Xi \subset (0,+\infty)$ an interval, is a function which is $2\pi$-periodic in the third variable, and $\mathcal{H}_0\colon \Xi \times \mathbb{R} \times \mathbb{R}^2$ is a function of the form 
\begin{equation} \label{eq-hamunpertastratta}
\mathcal{H}_0(r,\vartheta,p_r,p_\vartheta)=\displaystyle \dfrac{1}{2}\, \left(\alpha(r)\, p_{r}^2+\dfrac{1}{r^2} \, p_{\vartheta}^2\right) - V(r),
\quad (r,\vartheta,p_r,p_\vartheta)\in \Xi \times \mathbb{R} \times \mathbb{R}^2,
\end{equation}
for suitable functions $\alpha, V \colon \Xi \to \mathbb{R}$, with $\alpha (r) > 0$, for every $r\in \Xi$. In order to simplify the exposition, and since this is not restrictive for the applications that we are going to present, we assume that all the functions are of class $\mathcal{C}^\infty$ in their domain of definition.
	
The choice of a Hamiltonian $\mathcal{H}_0$ of the form \eqref{eq-hamunpertastratta} is motivated by the applications which we are going to discuss in the sequel. More precisely, let us notice that, on one hand, all central force problems in the plane, that is $\ddot x = V'(\vert x \vert) x/\vert x \vert$, can be written in the form \eqref{eq-hamunpertastratta}, cf.~\eqref{cenfor}. Moreover, the formulation \eqref{eq-hamunpertastratta} covers also the case of the reduced Hamiltonian coming from the study of equatorial geodesic motion in the Schwarzschild space time, see Section~\ref{sub-hamschw}.
	
In the following, $T$-periodic solutions of the Hamiltonian system associated with \eqref{eq-hamper} are meant as solutions $(r,\vartheta,p_r,p_\vartheta)$ such that $r,p_r,p_\vartheta$ are $T$-periodic and
\begin{equation*}
\vartheta(T) -\vartheta(0) \in 2\pi \mathbb{Z}. 
\end{equation*}
Accordingly, the planar motion $x = r e^{i\vartheta}$ is $T$-periodic in the usual sense. Moreover, taking into account both the invariance by time-translation as well as the $2\pi$-periodicity in $\vartheta$ of \eqref{eq-hamper}, we say that two $T$-periodic solutions $(r^1,\vartheta^1,p_r^1,p_\vartheta^1)$ and $(r^2,\vartheta^2,p_r^2,p_\vartheta^2)$ of \eqref{eq-hamper} are distinct if 
\begin{equation*}
(r^1(t+\tau),\vartheta^1(t+\tau),p_r^1(t+\tau),p_\vartheta^1(t+\tau))- (r^2(t),\vartheta^2(t),p_r^2(t),p_\vartheta^2(t)) \notin 
\{0\} \times 2\pi \mathbb{Z} \times \{(0,0)\}, 
\end{equation*}
for every $t,\tau \in \mathbb{R}$.
This implies that the corresponding cartesian solutions $x^1 = r^1 e^{i\vartheta^i}$ and $x^2 = r^2 e^{i\vartheta^2}$
satisfy $x^1(\cdot+\tau) \not\equiv x^2(\cdot)$ for every $\tau \in \mathbb{R}$.

\subsection{Preliminaries for the unperturbed problem} \label{sec-prelastrattononpert}
	
Let us observe that for the Hamiltonian system associated with \eqref{eq-hamunpertastratta} the radial motion is completely governed by an \textit{effective potential} independent from the angular coordinate $\vartheta$. Indeed, let us first notice that the Hamiltonian system is
\begin{equation} \label{eq-sisthamunpert}
\begin{cases}
\, \dot{r} = \alpha(r)\, p_r, 
& \; \dot{p_r} = -\dfrac{1}{2}\, \alpha'(r)\, p_r^2 +\dfrac{1}{r^3}\, p_\vartheta^2 + V'(r), 
\\
\, \dot{\vartheta} = \dfrac{1}{r^2}\, p_{\vartheta},
& \;
\dot{p_\vartheta} =  0,
\end{cases}
\end{equation}
thus implying that the \textit{angular momentum} $p_\vartheta$ is constant. 
	
From now on, we deal with the case $p_\vartheta \neq 0$, namely, the solution $x = r e^{i\vartheta}$ is non-rectilinear. We also notice that the time inversion $t \mapsto -t$ yields a change of sign for $p_\vartheta$. Hence, in what follows we assume that $p_\vartheta$ is positive and we denote by $L > 0$ its value. Then, the conservation of the Hamiltonian (the \textit{energy}) reads as
\begin{equation*}
\dfrac{1}{2}\, \left(\alpha(r)\, p_{r}^2+\dfrac{L^2}{r^2} \right) - V(r)=H,
\end{equation*}
for some $H\in \mathbb{R}$. From this relation, taking into account the first equation in \eqref{eq-sisthamunpert}, we easily obtain
\begin{equation} \label{eq-consenergia2}
\dfrac{1}{2}\, \dot{r}^2+\left(\dfrac{1}{2}\, \dfrac{L^2}{r^2}-V(r)-H\right)\, \alpha(r)=0.
\end{equation}
For every $(H,L)\in \mathbb{R}\times (0,+\infty)$, let us define the effective potential $Z(\cdot;H, L)\colon \Xi\to \mathbb{R}$ by
\begin{equation} \label{eq-defW}
Z(r; H, L)=\left(\dfrac{1}{2}\, \dfrac{L^2}{r^2}-V(r)-H\right)\, \alpha(r).
\end{equation}
Hence, we can write \eqref{eq-consenergia2} as
\begin{equation} \label{eq-consenergia3}
\dfrac{1}{2}\, \dot{r}^2+Z(r; H, L)=0.
\end{equation}
	
In order to ensure the existence of periodic orbits of \eqref{eq-sisthamunpert} and to be able to construct action-angle variables associated with the Hamiltonian \eqref{eq-hamunpertastratta}, we need some assumptions on $Z$. More precisely, we suppose that
\begin{enumerate}[leftmargin=26pt,labelsep=8pt,label=\textup{$(\star)$}]
\item there exist an open set $\Lambda \subset \mathbb{R}^2$ and three continuous functions $r_0, r_\pm\colon \Lambda \to \Xi$, with $r_-<r_0<r_+$, such that
\begin{equation} \label{eq-ipo1}
Z(r_0(H,L); H, L)<0,\quad Z'(r_0(H,L); H, L)=0,\quad Z''(r_0(H,L); H, L)>0
\end{equation}
and
\begin{equation} \label{eq-ipo2}
\begin{aligned}
&Z(r_\pm (H,L); H, L)=0,
\\
&Z'(r; H, L)(r-r_0(H,L))>0, \; \text{for every $r\in [r_-(H,L),r_+(H,L)]\setminus\{r_{0}(H,L)\}$,}
\end{aligned}
\end{equation}
for every $(H,L)\in \Lambda$.
\label{hp-star}
\end{enumerate}

See Figure~\ref{fig-01} for a graphical representation.

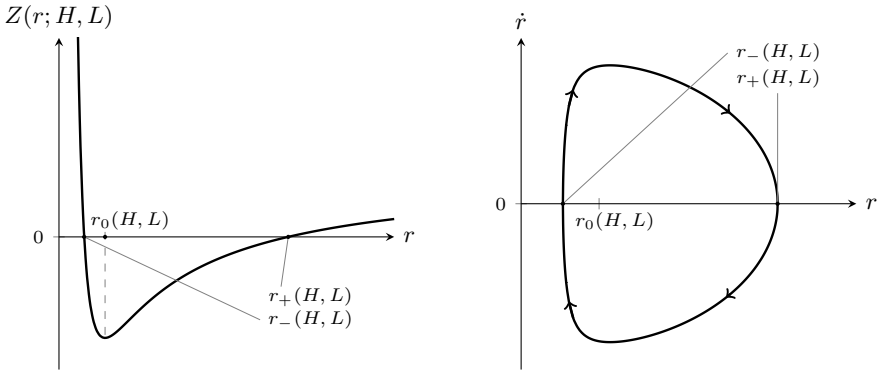
\begin{figure}[htb]
\centering
\begin{tikzpicture}
\begin{axis}[
  tick label style={font=\scriptsize},
  axis y line=left, 
  axis x line=middle,
  xtick={0},
  ytick={0},
  xticklabels={},
  yticklabels={$0$},
  xlabel={\small $r$},
  ylabel={\small $Z(r;H,L)$},
every axis x label/.style={
    at={(ticklabel* cs:1.0)},
    anchor=west,
},
every axis y label/.style={
    at={(ticklabel* cs:1.0)},
    anchor=south,
},
  width=6cm,
  height=6cm,
  xmin=0,
  xmax=8,
  ymin=-0.4,
  ymax=0.6]
\addplot [color=black,line width=0.9pt,smooth] coordinates {(0.1, 42.2567) (0.179, 11.1014) (0.258, 4.3267) (0.337, 1.94107) (0.416, 0.887245) (0.495, 0.357586) (0.574, 0.0694774) (0.653, -0.094917) (0.732, -0.191016) (0.811, -0.247283) (0.89, -0.279344) (0.969, -0.296244) (1.048, -0.303435) (1.127, -0.304323) (1.206, -0.301089) (1.285, -0.295166) (1.364, -0.287507) (1.443, -0.278761) (1.522, -0.269371) (1.601, -0.25964) (1.68, -0.24978) (1.759, -0.239936) (1.838, -0.230209) (1.917, -0.220666) (1.996, -0.211352) (2.075, -0.202295) (2.154, -0.193513) (2.233, -0.185014) (2.312, -0.1768) (2.391, -0.168869) (2.47, -0.161217) (2.549, -0.153838) (2.628, -0.146722) (2.707, -0.139861) (2.786, -0.133247) (2.865, -0.126868) (2.944, -0.120715) (3.023, -0.114779) (3.102, -0.109051) (3.181, -0.10352) (3.26, -0.0981792) (3.339, -0.0930192) (3.418, -0.088032) (3.497, -0.0832098) (3.576, -0.0785453) (3.655, -0.0740315) (3.734, -0.0696615) (3.813, -0.0654292) (3.892, -0.0613283) (3.971, -0.0573533) (4.05, -0.0534986) (4.129, -0.0497591) (4.208, -0.04613) (4.287, -0.0426065) (4.366, -0.0391842) (4.445, -0.0358591) (4.524, -0.0326271) (4.603, -0.0294844) (4.682, -0.0264275) (4.761, -0.023453) (4.84, -0.0205577) (4.919, -0.0177384) (4.998, -0.0149924) (5.077, -0.0123167) (5.156, -0.00970886) (5.235, -0.00716624) (5.314, -0.00468648) (5.393, -0.00226729) (5.472, 0.0000934928) (5.551, 0.00239795) (5.63, 0.00464806) (5.709, 0.00684572) (5.788, 0.00899272) (5.867, 0.0110908) (5.946, 0.0131416) (6.025, 0.0151467) (6.104, 0.0171076) (6.183, 0.0190258) (6.262, 0.0209026) (6.341, 0.0227393) (6.42, 0.0245373) (6.499, 0.0262977) (6.578, 0.0280218) (6.657, 0.0297105) (6.736, 0.0313651) (6.815, 0.0329865) (6.894, 0.0345757) (6.973, 0.0361337) (7.052, 0.0376614) (7.131, 0.0391597) (7.21, 0.0406294) (7.289, 0.0420713) (7.368, 0.0434863) (7.447, 0.044875) (7.526, 0.0462382) (7.605, 0.0475766) (7.684, 0.0488909) (7.763, 0.0501817) (7.842, 0.0514497) (7.921, 0.0526954) (8., 0.0539194)};
\draw [color=gray, dashed, line width=0.3pt] (axis cs:1.1,0.015)--(axis cs:1.1,-0.304574);
\node at (axis cs: 1.7,0.05) {\scriptsize{$r_{0}(H,L)$}};
\draw [color=gray, line width=0.3pt] (axis cs: 0.6, 0)--(axis cs:4.8,-0.25);
\draw [color=gray, line width=0.3pt] (axis cs: 5.472, 0)--(axis cs:5.3,-0.15);
\node at (axis cs: 6,-0.18) {\scriptsize{$r_{+}(H,L)$}};
\node at (axis cs: 6,-0.25) {\scriptsize{$r_{-}(H,L)$}};
\fill (axis cs: 0.6,0) circle (0.8pt);
\fill (axis cs: 1.1,0) circle (0.8pt);
\fill (axis cs: 5.472,0) circle (0.8pt);
\end{axis}
\end{tikzpicture}
\quad\quad
\begin{tikzpicture}
\begin{axis}[
  tick label style={font=\scriptsize},
  axis y line=left, 
  axis x line=middle,
  xtick={1.2},
  ytick={0},
  xticklabels={},
  yticklabels={$0$},
  xlabel={\small $r$},
  ylabel={\small $\dot r$},
every axis x label/.style={
    at={(ticklabel* cs:1.0)},
    anchor=west,
},
every axis y label/.style={
    at={(ticklabel* cs:1.0)},
    anchor=south,
},
  width=6cm,
  height=6cm,
  xmin=0.2,
  xmax=4.5,
  ymin=-3.3,
  ymax=3.3]
\draw[->] [color=black,line width=0.9pt] (axis cs: 0.812, 1.94068)--(axis cs: 0.868, 2.27431);
\draw[->] [color=black,line width=0.9pt] (axis cs: 2.828, 1.86766)--(axis cs: 2.884, 1.79682);
\draw[->] [color=black,line width=0.9pt] (axis cs: 0.868, -2.27431)--(axis cs: 0.812, -1.94068);
\draw[->] [color=black,line width=0.9pt] (axis cs: 2.884, -1.79682)--(axis cs: 2.828, -1.86766);
\addplot [color=black,line width=0.9pt,smooth] coordinates {(0.7355, 0) (0.7368, 0.16485) (0.7384, 0.381708) (0.74, 0.511707) (0.7416, 0.612917) (0.7432, 0.698019) (0.7448, 0.772395) (0.7464, 0.838936) (0.748, 0.899413) (0.7496, 0.955007) (0.7512, 1.00655) (0.7528, 1.05467) (0.7544, 1.09983) (0.756, 1.1424) (0.7576, 1.18268) (0.7592, 1.22092) (0.7608, 1.25732) (0.7624, 1.29204) (0.764, 1.32525) (0.7656, 1.35706) (0.7672, 1.38759) (0.7688, 1.41692) (0.7704, 1.44515) (0.772, 1.47234) (0.7736, 1.49858) (0.7752, 1.52391) (0.7768, 1.54839) (0.7784, 1.57208) (0.78, 1.59501) (0.812, 1.94068) (0.868, 2.27431) (0.924, 2.45743) (0.98, 2.56894) (1.036, 2.64052) (1.092, 2.68754) (1.148, 2.71834) (1.204, 2.7378) (1.26, 2.74895) (1.316, 2.75377) (1.372, 2.75355) (1.428, 2.74922) (1.484, 2.74141) (1.54, 2.73059) (1.596, 2.71708) (1.652, 2.70114) (1.708, 2.68293) (1.764, 2.66258) (1.82, 2.6402) (1.876, 2.61583) (1.932, 2.58951) (1.988, 2.56126) (2.044, 2.53107) (2.1, 2.49894) (2.156, 2.46482) (2.212, 2.42868) (2.268, 2.39044) (2.324, 2.35005) (2.38, 2.30741) (2.436, 2.26241) (2.492, 2.21493) (2.548, 2.16483) (2.604, 2.11193) (2.66, 2.05602) (2.716, 1.99688) (2.772, 1.93421) (2.828, 1.86766) (2.884, 1.79682) (2.94, 1.72117) (2.996, 1.64004) (3.052, 1.55259) (3.108, 1.45768) (3.164, 1.35375) (3.22, 1.23855) (3.276, 1.10858) (3.3, 1.04701) (3.304, 1.03634) (3.308, 1.02554) (3.312, 1.01461) (3.316, 1.00354) (3.32, 0.992334) (3.324, 0.980983) (3.328, 0.969483) (3.332, 0.957828) (3.336, 0.946011) (3.34, 0.934028) (3.344, 0.921871) (3.348, 0.909534) (3.352, 0.897009) (3.356, 0.884288) (3.36, 0.871362) (3.364, 0.858223) (3.368, 0.84486) (3.372, 0.831263) (3.376, 0.817419) (3.38, 0.803317) (3.384, 0.788942) (3.388, 0.774279) (3.392, 0.759311) (3.396, 0.744021) (3.4, 0.728387) (3.404, 0.712387) (3.408, 0.695995) (3.412, 0.679185) (3.416, 0.661923) (3.42, 0.644173) (3.424, 0.625894) (3.428, 0.607038) (3.432, 0.587549) (3.436, 0.567362) (3.44, 0.546401) (3.444, 0.524571) (3.448, 0.50176) (3.452, 0.477827) (3.456, 0.452595) (3.46, 0.425832) (3.464, 0.397229) (3.468, 0.366355) (3.472, 0.332579) (3.476, 0.294905) (3.48, 0.251587) (3.484, 0.198975) (3.488, 0.12591) (3.49, 0)};
\addplot [color=black,line width=0.9pt,smooth] coordinates {(0.7355, 0) (0.7368, -0.16485) (0.7384, -0.381708) (0.74, -0.511707) (0.7416, -0.612917) (0.7432, -0.698019) (0.7448, -0.772395) (0.7464, -0.838936) (0.748, -0.899413) (0.7496, -0.955007) (0.7512, -1.00655) (0.7528, -1.05467) (0.7544, -1.09983) (0.756, -1.1424) (0.7576, -1.18268) (0.7592, -1.22092) (0.7608, -1.25732) (0.7624, -1.29204) (0.764, -1.32525) (0.7656, -1.35706) (0.7672, -1.38759) (0.7688, -1.41692) (0.7704, -1.44515) (0.772, -1.47234) (0.7736, -1.49858) (0.7752, -1.52391) (0.7768, -1.54839) (0.7784, -1.57208) (0.78, -1.59501) (0.812, -1.94068) (0.868, -2.27431) (0.924, -2.45743) (0.98, -2.56894) (1.036, -2.64052) (1.092, -2.68754) (1.148, -2.71834) (1.204, -2.7378) (1.26, -2.74895) (1.316, -2.75377) (1.372, -2.75355) (1.428, -2.74922) (1.484, -2.74141) (1.54, -2.73059) (1.596, -2.71708) (1.652, -2.70114) (1.708, -2.68293) (1.764, -2.66258) (1.82, -2.6402) (1.876, -2.61583) (1.932, -2.58951) (1.988, -2.56126) (2.044, -2.53107) (2.1, -2.49894) (2.156, -2.46482) (2.212, -2.42868) (2.268, -2.39044) (2.324, -2.35005) (2.38, -2.30741) (2.436, -2.26241) (2.492, -2.21493) (2.548, -2.16483) (2.604, -2.11193) (2.66, -2.05602) (2.716, -1.99688) (2.772, -1.93421) (2.828, -1.86766) (2.884, -1.79682) (2.94, -1.72117) (2.996, -1.64004) (3.052, -1.55259) (3.108, -1.45768) (3.164, -1.35375) (3.22, -1.23855) (3.276, -1.10858) (3.3, -1.04701) (3.304, -1.03634) (3.308, -1.02554) (3.312, -1.01461) (3.316, -1.00354) (3.32, -0.992334) (3.324, -0.980983) (3.328, -0.969483) (3.332, -0.957828) (3.336, -0.946011) (3.34, -0.934028) (3.344, -0.921871) (3.348, -0.909534) (3.352, -0.897009) (3.356, -0.884288) (3.36, -0.871362) (3.364, -0.858223) (3.368, -0.84486) (3.372, -0.831263) (3.376, -0.817419) (3.38, -0.803317) (3.384, -0.788942) (3.388, -0.774279) (3.392, -0.759311) (3.396, -0.744021) (3.4, -0.728387) (3.404, -0.712387) (3.408, -0.695995) (3.412, -0.679185) (3.416, -0.661923) (3.42, -0.644173) (3.424, -0.625894) (3.428, -0.607038) (3.432, -0.587549) (3.436, -0.567362) (3.44, -0.546401) (3.444, -0.524571) (3.448, -0.50176) (3.452, -0.477827) (3.456, -0.452595) (3.46, -0.425832) (3.464, -0.397229) (3.468, -0.366355) (3.472, -0.332579) (3.476, -0.294905) (3.48, -0.251587) (3.484, -0.198975) (3.488, -0.12591) (3.49, 0)};
\node at (axis cs: 1.4, -0.35) {\scriptsize{$r_{0}(H,L)$}};
\draw [color=gray, line width=0.3pt] (axis cs: 0.7355, 0)--(axis cs:2.85,3);
\draw [color=gray, line width=0.3pt] (axis cs: 3.49, 0)--(axis cs:3.49,2.2);
\node at (axis cs: 3.5, 3) {\scriptsize{$r_{-}(H,L)$}};
\node at (axis cs: 3.5, 2.5) {\scriptsize{$r_{+}(H,L)$}};
\fill (axis cs: 0.7355, 0) circle (0.8pt);
\fill (axis cs: 3.49, 0) circle (0.8pt);
\end{axis}
\end{tikzpicture}
\captionof{figure}{Qualitative graph of $Z(\cdot;H,L)$ for fixed $(H,L)\in\Lambda$ (on the left) and representation of the corresponding bounded orbit in the $(r,\dot r)$-plane (on the right).}
\label{fig-01}
\end{figure}

From these assumptions it is immediate to see that for every $(H,L)\in \Lambda$ the energy relation \eqref{eq-consenergia3} provides a closed curve in the plane $(r,\dot{r})$ enclosing the point $(r_0(H,L),0)$; moreover, the 	period of the corresponding orbit is given by
\begin{equation} \label{eq-periodoradiale}
T(H,L)=\sqrt{2}\, \int_{r_-(H,L)}^{r_+(H,L)} \dfrac{\mathrm{d}r}{\sqrt{-Z(r; H,L)}}.
\end{equation}
The above quantity can be meant as the \emph{radial period} of a solution of the Hamiltonian system \eqref{eq-sisthamunpert} having energy $H$ and angular momentum $L$, with $(H,L) \in \Lambda$. Taking into account the third equation in \eqref{eq-sisthamunpert}, we deduce that the angular displacement in the radial period (the so-called \emph{apsidal angle}) is
\begin{equation} \label{eq-deltaphi1}
\Theta (H,L) =L\, \int_0^{T(H,L)} \dfrac{\mathrm{d}t}{r(t)^2},
\end{equation}
which can be written, again by means of standard arguments, as
\begin{equation*}
\Theta (H,L) =\sqrt{2}\, L\, \int_{r_-(H,L)}^{r_+(H,L)} \dfrac{\mathrm{d}r}{r^2\sqrt{-Z(1/r;H,L)}},
\end{equation*}
or, equivalently,
\begin{equation} \label{eq-deltaphi2}
\Theta (H,L) =\sqrt{2}\, L\, \int_{1/r_+(H,L)}^{1/r_-(H,L)} \dfrac{\mathrm{d}r}{\sqrt{-Z(1/r;H,L)}}.
\end{equation}
Incidentally, notice that $\Theta(H,L) > 0$ for every $(H,L)\in \Lambda$, since we have assumed $L > 0$.

With this in mind, it is easily seen that a solution of \eqref{eq-sisthamunpert} with energy $H^*$ and angular momentum $L^*$, with $(H^*,L^*) \in \Lambda$, is periodic if and only if there exists a pair of positive coprime integers $(n,k)$ such that
\begin{equation}\label{eq-tori}
\Theta(H^*,L^*) = \frac{2\pi k}{n},
\end{equation}
and, in this case, the minimal period $T$ is given by $T = n T(H^*,L^*)$, cf.~\cite{BDF2}.
In the cartesian coordinate $x = r e^{i\vartheta}$, the integer $k$ is thus the winding number on the time interval $[0,T]$ of the planar curve $x$ around the origin, while the integer $n$ can be meant as the winding number, on $[0,T]$, of the planar curve $(r,\dot r)$ 
(or, equivalently, of the curve $(r,p_r)$) around the point $(r_0(H^*,L^*),0)$.
	
\begin{remark}\label{remark-regolarita}
For further convenience, let us notice that, even if required to be just continuous, the maps $r_0, r_{\pm}$ are actually of class $\mathcal{C}^\infty$, as an easy implicit function argument based on \eqref{eq-ipo1} and \eqref{eq-ipo2} shows. Taking into account this fact, a further implicit function argument together with the differentiability properties of solutions to Cauchy problems (cf.~\cite[Remark~A.1]{BDF2}) yields that the maps $T$ and $\Theta$ are of class $\mathcal{C}^\infty$, as well (for the map $T$, the argument is quite direct; on the other hand, when dealing with $\Theta$ it is convenient to first write it as $\Theta(H,L) = L P(H,L)$, with $P$ a time-map, cf.~\cite[formula~(2.8) and Remark~A.1]{BDF2}).
	
In the case when $\mathcal{H}_0$ is an analytic Hamiltonian, analytic versions of the implicit function theorem \cite[Chapter~6]{KrPa-book} and of the smooth dependence of solutions to Cauchy problems \cite[Chapter~3]{Ha-book} yield the analyticity of the maps $r_0, r_{\pm}$, as well as $T$ and $\Theta$.
\hfill$\lhd$
\end{remark}
	
\begin{remark}\label{remark-subtle}
Taking into account that $\dot r = \alpha(r) p_r$, relation \eqref{eq-consenergia3} can be equivalently written as
\begin{equation}\label{eq-consenergia4}
\dfrac{1}{2}\, p_r^2+\widetilde Z(r; H, L)=0, \qquad \text{where} \quad \widetilde Z(r; H, L) = \frac{Z(r;H,L)}{\alpha(r)^2}.
\end{equation}
Solutions of \eqref{eq-consenergia4}, of course, provide the orbits for the dynamics in the plane $(r,p_r)$, just as \eqref{eq-consenergia3} does for the plane $(r,\dot r)$. In particular, for $(H,L) \in \Lambda$ the period of the orbit passing through $(r_\pm(H, L), 0)$ is given by \eqref{eq-periodoradiale} independently from the plane in which the orbit is considered.

There is however a subtle point which is worth to be emphasized. While the $(r,\dot r)$-plane is more natural form the kinematic point of view, the $(r,p_r)$-plane enjoys the following remarkable property: defining $\mathcal{A}(H,L)$ as the area of the bounded region enclosed by the orbit passing through $(r_\pm(H, L), 0)$ in the $(r,p_r)$-plane, that is, by Gauss--Green formula,
\begin{equation}\label{def-area}
\mathcal{A}(H,L) = 2\sqrt{2}\, \int_{r_-(H,L)}^{r_+(H,L)} \sqrt{-\widetilde Z(r; H,L)}\,\mathrm{d}r,
\end{equation}
the relation
\begin{equation*}
\partial_H \mathcal{A}(H,L) = T(H,L)
\end{equation*}
holds true, as it is easily checked (the analogous property, instead, is not valid in the $(r,\dot r)$-plane). This remark will be crucial in the construction of the action-angle coordinates.
\hfill$\lhd$
\end{remark}

\subsection{Statement and proof}\label{section-3.2}

We are now in a position to state our main result for the Hamiltonian system associated with the Hamiltonian $\mathcal{H}_{\varepsilon}$ defined in \eqref{eq-hamper}, that is
\begin{equation}\label{eq-ham-pert}
\begin{cases}
\, \dot{r} = \alpha(r)\, p_r + \varepsilon \partial_{p_r} \widetilde{\mathcal{H}}(\varepsilon,r,\vartheta,p_r,p_\vartheta),
\vspace{3pt}\\
\, \dot{p_r} = -\dfrac{1}{2}\, \alpha'(r)\, p_r^2 +\dfrac{1}{r^3}\, p_\vartheta^2 + V'(r)
- \varepsilon \partial_{r} \widetilde{\mathcal{H}}(\varepsilon,r,\vartheta,p_r,p_\vartheta), 
\vspace{3pt}\\
\, \dot{\vartheta} = \dfrac{1}{r^2}\, p_{\vartheta}
+ \varepsilon \partial_{p_\vartheta} \widetilde{\mathcal{H}}(\varepsilon,r,\vartheta,p_r,p_\vartheta),
\\
\, \dot{p_\vartheta} = - \varepsilon \partial_{\vartheta} \widetilde{\mathcal{H}}(\varepsilon,r,\vartheta,p_r,p_\vartheta).
\end{cases}
\end{equation}
The statement is as follows.

\begin{theorem}\label{teo-main}
Assume that hypothesis \ref{hp-star} holds true and let $(H^*,L^*) \in \Lambda$ be such that \eqref{eq-tori} is satisfied for a pair of coprime positive integers $(n,k)$. Moreover, suppose that
\begin{equation}\label{eq-nondeg}
\partial_L \Theta(H^*,L^*) \neq 0.
\end{equation}
Then, there exist $\varepsilon^* > 0$ and $\delta^* > 0$ such that,
for every pair of positive coprime integers $(p,q)$ satisfying
\begin{equation}\label{eq-pq}
\left\vert \frac{p}{q} - \frac{k}{n}\right \vert < \delta^*,
\end{equation}
and for every $\varepsilon$ such that $\vert\varepsilon \vert < \varepsilon^*$, there are two distinct periodic solutions 
$(r^j,\vartheta^j,p_r^j,p_\vartheta^j)$ (with $j=1,2$) of the Hamiltonian system \eqref{eq-ham-pert}  with energy $\mathcal{H}_{\varepsilon} = H^*$ and such that, denoting by $T^j$ the minimal period, it holds that:
\begin{itemize}
\item[$(i)$] $\vartheta^j(T^j) -\vartheta^j(0) = 2\pi p$ (that is, the winding number on the time interval $[0,T^j]$ of the planar curve 
$x^j = r^j e^{i \vartheta^j}$ is $p$),
\item[$(ii)$] the winding number on the interval $[0,T^j]$ of the planar curve $(r^j,p_r^j)$
around the point $(r_0(H^*,L^*),0)$ is $q$.
\end{itemize}
\end{theorem}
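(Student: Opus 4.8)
The plan is to introduce action--angle coordinates for the unperturbed Hamiltonian $\mathcal{H}_0$, so as to rewrite the perturbed system \eqref{eq-ham-pert} in the nearly integrable form \eqref{eq-sispertazioneangolo}, and then to invoke Theorem~\ref{teo-pbapplicato}. The two conserved quantities of $\mathcal{H}_0$ are the energy $H$ and the angular momentum $L = p_\vartheta$; since $\vartheta$ is cyclic, the angular action is simply $I_2 = L$, while the radial action is naturally defined as $I_1 = \mathcal{A}(H,L)/(2\pi)$, with $\mathcal{A}$ the area enclosed in the $(r,p_r)$-plane introduced in \eqref{def-area}. The crucial point, already highlighted in Remark~\ref{remark-subtle}, is that working in the $(r,p_r)$-plane (rather than in the kinematically more natural $(r,\dot r)$-plane) guarantees $\partial_H \mathcal{A} = T$, so that $(H,L) \mapsto (I_1,I_2)$ is a genuine symplectic change of variables turning $\mathcal{H}_0$ into a function $\mathcal{K}_0(I_1,I_2)$ of the actions alone. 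The regularity of this transformation, which must be at least $\mathcal{C}^6$ for Theorem~\ref{teo-pbapplicato} to apply, follows from the smoothness of $T$, $\Theta$ and $r_0,r_\pm$ recorded in Remark~\ref{remark-regolarita}, together with a standard implicit function argument.

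First I would compute the unperturbed frequencies. From $I_1 = \mathcal{A}(H,L)/(2\pi)$ and $\partial_H \mathcal{A} = T$ one obtains the radial frequency $\partial_{I_1}\mathcal{K}_0 = 2\pi/T(H,L) > 0$, which in particular settles the requirement $\partial_{I_1}\mathcal{K}_0(I^*) \neq 0$. For the ratio of frequencies, the cleanest route is the classical identity $\partial_L \mathcal{A}|_H = -\Theta$, obtained by differentiating $p_r = \sqrt{2(H+V-\tfrac{L^2}{2r^2})/\alpha}$ under the loop integral \eqref{def-area} and recognizing the apsidal angle \eqref{eq-deltaphi1}; combined with $I_2 = L$ this yields the fundamental relation
\[
\frac{\partial_{I_2}\mathcal{K}_0}{\partial_{I_1}\mathcal{K}_0} = \frac{\Theta(H,L)}{2\pi}.
\]
Consequently, the resonance hypothesis \eqref{eq-tori}, namely $\Theta(H^*,L^*) = 2\pi k/n$, is exactly condition \eqref{eq-ipoazioneangolo} of Theorem~\ref{teo-pbapplicato} for the coprime pair $(n_1,n_2) = (n,k)$.

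It then remains to translate the non-degeneracy condition. By Remark~\ref{rem-alternativa}, hypothesis \eqref{eq-iponondegenere} of Theorem~\ref{teo-pbapplicato} is equivalent to the non-vanishing at $I_2^*$ of the derivative of $I_2 \mapsto \partial_{I_2}\mathcal{K}_0/\partial_{I_1}\mathcal{K}_0$ computed along the energy level $\mathcal{K}_0 = H^*$. Along this level one has $I_2 = L$ and, by the fundamental relation above, this map is precisely $L \mapsto \Theta(H^*,L)/(2\pi)$; hence its derivative at $L^*$ equals $\partial_L\Theta(H^*,L^*)/(2\pi)$, and the requested non-degeneracy is exactly assumption \eqref{eq-nondeg}. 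With all hypotheses of Theorem~\ref{teo-pbapplicato} verified, I obtain, for $(m_1,m_2) = (q,p)$ and $\varepsilon$ small, two distinct periodic solutions of \eqref{eq-sispertazioneangolo} with energy $K^* = H^*$ and winding vector $(l_1,l_2) = (q,p)$, the sign being $+1$ since $\partial_{I_1}\mathcal{K}_0 > 0$.

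Finally I would undo the change of coordinates and read off the two geometric conclusions. The radial angle $\varphi_1$, conjugate to $I_1$, advances by $2\pi$ over each radial period, so $l_1 = q$ records exactly $q$ turns of $(r,p_r)$ around $(r_0(H^*,L^*),0)$, giving $(ii)$. For $(i)$, the generating function of the transformation has the form $\int^r p_r\,\mathrm{d}r' + L\vartheta$, whence $\varphi_2 = \vartheta + g(r;H,L)$ with $g$ periodic along the radial motion; over the full orbit the radially periodic part $g$ returns to its value, so $\varphi_2$ and $\vartheta$ undergo the same total increment, and $l_2 = p$ translates into $\vartheta(T^j)-\vartheta(0) = 2\pi p$. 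The main obstacle I expect is precisely this last bookkeeping: carrying out the action--angle reduction carefully enough to control the splitting $\varphi_2 = \vartheta + (\text{radially periodic})$, so that the abstract winding vector of Theorem~\ref{teo-pbapplicato} is correctly matched to the separate windings of $x = re^{i\vartheta}$ and of $(r,p_r)$. By contrast, the analytic inputs (the identities $\partial_H\mathcal{A}=T$, $\partial_L\mathcal{A}=-\Theta$ and the $\mathcal{C}^\infty$ regularity) are routine given the preliminaries already established.
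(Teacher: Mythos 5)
Your proposal is correct and follows the same overall strategy as the paper: pass to action--angle variables built on the area function \eqref{def-area} in the $(r,p_r)$-plane (the point of Remark~\ref{remark-subtle}), then feed the resulting nearly integrable system to Theorem~\ref{teo-pbapplicato}, translating the resonance \eqref{eq-tori} into \eqref{eq-ipoazioneangolo} and the non-degeneracy \eqref{eq-nondeg} through the reformulation of Remark~\ref{rem-alternativa} along the level $\mathcal{K}_0=H^*$. The one visible difference is your normalization of the actions: the paper (inheriting the construction of \cite{BDF2}) uses $\Psi(H,L)=\bigl(\tfrac{1}{2\pi}\mathcal{A}(H,L)+L,\,L\bigr)$, which gives the frequency ratio $\partial_{I_2}\mathcal{K}_0/\partial_{I_1}\mathcal{K}_0=\tfrac{1}{2\pi}\Theta-1$, hence the resonance pair $(n_1,n_2)=(n,k-n)$ and the final choice $(m_1,m_2)=(q,p-q)$; you instead take the plain actions $\bigl(\tfrac{1}{2\pi}\mathcal{A},\,L\bigr)$, obtaining the ratio $\Theta/2\pi$ and the cleaner identifications $(n_1,n_2)=(n,k)$, $(m_1,m_2)=(q,p)$. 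The two conventions differ by a unimodular linear change of the actions (with the corresponding transposed-inverse change of the angles), so they are equally valid, and coprimality is preserved either way since $\gcd(n,k-n)=\gcd(n,k)$. A second, minor difference is that you derive the key identities $\partial_H\mathcal{A}=T$ and $\partial_L\mathcal{A}\big|_H=-\Theta$ (differentiating $p_r^2=2\bigl(H+V(r)-\tfrac{L^2}{2r^2}\bigr)/\alpha(r)$ under the loop integral and using $\mathrm{d}r=\alpha p_r\,\mathrm{d}t$ to recover \eqref{eq-deltaphi1}) and the winding bookkeeping for $(i)$--$(ii)$ from the generating function, where the paper simply cites \cite[Section~2 and proof of Theorem~3.1]{BDF2}; your self-contained treatment is sound, and in particular your resolution of the step you flagged as risky is the right one: over a full radial loop the multivalued part of $\varphi_2-\vartheta$ increments by $\partial_{I_2}\oint p_r\,\mathrm{d}r=\partial_{I_2}(2\pi I_1)=0$ at fixed $I_1$, so $\varphi_2-\vartheta$ is radially periodic and the total increments of $\varphi_2$ and $\vartheta$ coincide, yielding $(i)$, while $\varphi_1$ advancing by $2\pi$ per radial cycle yields $(ii)$.
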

	
\begin{proof}
As already mentioned, we are going to apply Theorem~\ref{teo-pbapplicato} to the Hamiltonian system obtained from
\eqref{eq-hamper} after passing to action-angle coordinates. 

The transformation to action-angle coordinates 
\begin{equation}\label{def-sigma}
(r,\vartheta,p_r,p_\vartheta) \mapsto \Sigma(r,\vartheta,p_r,p_\vartheta) = (\varphi_1,\varphi_2,I_1,I_2) 
\end{equation}
can be defined in a standard way, analogously to what done in \cite[Section~2]{BDF2} for the case $\alpha \equiv 1$ 
(the only delicate point is that, according to Remark~\ref{remark-subtle}, the area function in the plane $(r,p_r)$, that is \eqref{def-area}, has to be considered, see also \cite{Ar-89, Be-notes}). More precisely, let us consider the set
\begin{equation*}
\left\{ (r,\vartheta,p_r,p_\vartheta) \in \Xi \times \mathbb{R} \times \mathbb{R}^2 \, : \, \dfrac{1}{2}\, p_r^2+\widetilde Z(r; H^*, L^*)=0, \,  p_\vartheta = L^*\right\},
\end{equation*}
where $\widetilde Z$ is defined as in \eqref{eq-consenergia4}. According to the discussion therein, a connected component of the above set is made up by the points $(r,\vartheta,p_r,L^*)$ where $(r,p_r)$ describe the orbit passing through $(r_0(H^*,L^*),0)$: regarding $\vartheta$ as an angular variable, such a connected component is thus a two-dimensional torus, say $\mathcal{T}_0(H^*,L^*)$.
The transformation $\Sigma$ in \eqref{def-sigma} can be defined on an open neighborhood of $\mathcal{T}_0(H^*,L^*)$ and, on this open domain, it transforms system \eqref{eq-hamper} into a system of the type
\begin{equation}\label{eq-sispertazioneangolo2}
\begin{cases}\, \dot{\varphi}=\nabla \mathcal{K}_0 (I)+\varepsilon \, \nabla_I \widetilde{\mathcal{K}}(\varepsilon,\varphi,I),
\vspace{3pt}\\
\, \dot{I}=-\varepsilon \, \nabla_\varphi \widetilde{\mathcal{K}}(\varepsilon,\varphi, I), 
\end{cases}
\end{equation}
where
\begin{equation*}
\mathcal{K}_0(I_1,I_2) = \mathcal{H}_0(\Sigma^{-1}(\varphi_1,\varphi_2,I_1,I_2) )
\end{equation*}
(by the property of the transformation, $\mathcal{K}_0$ does not depend on the angle variables) and
\begin{equation*}
\widetilde{\mathcal{K}}(\varepsilon,\varphi_1,\varphi_2,I_1,I_2) = \widetilde{\mathcal{H}}(\varepsilon,\Sigma^{-1}(\varphi_1,\varphi_2,I_1,I_2) ).
\end{equation*}
Since $\Sigma$ is of class $\mathcal{C}^\infty$, the transformed Hamiltonians $\mathcal{K}_0$ and $\widetilde{\mathcal{K}}$ are $\mathcal{C}^\infty$, as well.

With this transformation, the torus $\mathcal{T}_0(H^*,L^*)$ is transformed into the set
\begin{equation*}
\{ I_1 = I_1^*, \, I_2 = I_2^*\},
\end{equation*}
where $(I_1^*,I_2^*)$ are the actions corresponding to $(H^*,L^*)$ via the one-to-one map
\begin{equation*}
\Psi \colon (H,L) \mapsto (I_1,I_2) = \left(\frac{1}{2\pi} \mathcal{A}(H,L) + L, L \right).
\end{equation*}
Moreover, as proved in \cite[Proof of Theorem~3.1]{BDF2},
\begin{equation*}
\partial_{I_1}\mathcal{K}_0(I_1,I_2) = \frac{2\pi}{T(\Psi^{-1}(I_1,I_2))}, \qquad \partial_{I_2}\mathcal{K}_0(I_1,I_2) = \frac{\Theta(\Psi^{-1}(I_1,I_2))-2\pi}{T(\Psi^{-1}(I_1,I_2))},
\end{equation*}
where $T$ and $\Theta$ are as in \eqref{eq-periodoradiale} and \eqref{eq-deltaphi1}. Thus
\begin{equation*}
\frac{\partial_{I_2}\mathcal{K}_0(I_1,I_2)}{\partial_{I_1}\mathcal{K}_0(I_1,I_2)} = \frac{1}{2\pi} \Theta((\Psi^{-1}(I_1,I_2))) - 1
\end{equation*}
and so, by \eqref{eq-tori},
\begin{equation*}
\frac{\partial_{I_2}\mathcal{K}_0(I_1^*,I_2^*)}{\partial_{I_1}\mathcal{K}_0(I_1^*,I_2^*)} = \frac{k-n}{n}.
\end{equation*}
Hence, \eqref{eq-ipoazioneangolo} is satisfied for $n_2 = k-n \in \mathbb{Z}$ and $n_1 = n \in \mathbb{N} \setminus \{0\}$ (notice that $n_1$ and $n_2$ are coprime, since $k$ and $n$ are so).

It thus remains to check that the non-degeneracy condition \eqref{eq-iponondegenere} is satisfied, as well. To this end, we take advantage of the discussion in Remark~\ref{rem-alternativa}: accordingly, we are going to prove that the function
\begin{equation*}
\eta(I_2) := \frac{\partial_{I_2}\mathcal{K}_0(\mathcal{I}_0(I_2),I_2)}{\partial_{I_1}\mathcal{K}_0(\mathcal{I}_0(I_2),I_2)} = 
\frac{1}{2\pi}\Theta(\Psi^{-1}(\mathcal{I}_0(I_2),I_2))-1
\end{equation*}
has non-zero derivative at $I_2^* = L^*$, recalling that $\mathcal{I}_0(I_2)$ is defined by
$\mathcal{K}_0(\mathcal{I}_0(I_2),I_2) = H^*$. To this end, we notice that
\begin{equation*}
\Psi^{-1}(\mathcal{I}_0(I_2),I_2) = (H^*,I_2)
\end{equation*}
and so 
\begin{equation*}
\eta(I_2) = \frac{1}{2\pi}\Theta(H^*,I_2)-1.
\end{equation*}
Thus, $\eta'(I_2^*) \neq 0$ as a direct consequence of \eqref{eq-nondeg} and of the fact that $I_2^* = L^*$.

Summing up, all the assumptions of Theorem~\ref{teo-pbapplicato} are satisfied. Accordingly, there exist $\varepsilon^* >0$ and $\delta^* > 0$ such that, for every pair of coprime integers $(m_1,m_2) \in (\mathbb{N}\setminus\{0\}) \times \mathbb{Z}$ satisfying
\begin{equation}\label{hpm1}
\left\vert \frac{m_2}{m_1} - \frac{k-n}{n}\right \vert < \delta^*,
\end{equation}
and for every $\varepsilon$ satisfying $\vert\varepsilon \vert < \varepsilon^*$, there are two distinct periodic solutions of the Hamiltonian system \eqref{eq-sispertazioneangolo2} with energy $\mathcal{K}_{\varepsilon} = H^*$ and winding 
vector on their minimal period equal to  $(l_1,l_2) =(m_1,m_2)$. Thus, if $(p,q)$ is a pair of positive coprime integers satisfying \eqref{eq-pq}
one can choose $m_2 = p-q$ and $m_1 = q$; by undoing the change of variables, two distinct periodic solutions of the Hamiltonian systems \eqref{eq-hamper} of energy $\mathcal{H}_{\varepsilon} = H^*$ are obtained. Information $(i)$ and $(ii)$ about the winding numbers of these solutions are a consequence of the way in which the angle variables $\varphi_1, \varphi_2$ are defined in the transformation $\Sigma$,
see \cite[Section~2]{BDF2} for more details. 
\end{proof}

\begin{remark}
By \eqref{eq-tori} and \eqref{eq-nondeg}, for any pair of positive coprime integers $(p,q)$ satisfying \eqref{eq-pq} there exists a unique value $L$ near $L^*$ such that
\begin{equation*}
\Theta(H^*,L) = \frac{2\pi p}{q}.
\end{equation*}
By a careful inspection of the proof of Theorem~\ref{teo-main}, and taking into account Remark~\ref{rem2},
we can deduce that, for the solutions corresponding to $(p,q)$, the coordinate $p_\vartheta^j$ is arbitrarily near, as $\varepsilon \to 0$, to such $L$.
\hfill$\lhd$
\end{remark}

We conclude this section by presenting a couple of applications of Theorem~\ref{teo-main} to perturbations of central force problems in the plane, namely
\begin{equation}\label{eq-cenfor}
\ddot x = V'(\vert x \vert) \frac{x}{\vert x \vert} + \varepsilon \,\nabla U(x), \qquad x \in \mathbb{R}^2 \setminus \{0\},
\end{equation}
where $V \colon (0,+\infty) \to \mathbb{R}$ and $U \colon \mathbb{R}^2 \setminus \{0\} \to \mathbb{R}$ are functions of class $\mathcal{C}^\infty$. 
As well known, by passing to polar coordinates $x = r e^{i\vartheta}$, the unperturbed Hamiltonian $\mathcal{H}_0$ writes as
\begin{equation}\label{cenfor}
\mathcal{H}_0(r,\vartheta,p_r,p_\vartheta)=\displaystyle \dfrac{1}{2}\, \left(p_{r}^2+\dfrac{1}{r^2} \, p_{\vartheta}^2\right) - V(r),
\quad (r,\vartheta,p_r,p_\vartheta)\in (0,+\infty) \times \mathbb{R} \times \mathbb{R}^2,
\end{equation}
which is of the form \eqref{eq-hamunpertastratta} with $\Xi = (0,+\infty)$ and $\alpha \equiv 1$.

In what follows, we exhibit two specific choices of $V$ for which non-circular periodic solutions of the unperturbed problem exist and the non-degeneracy condition \eqref{eq-nondeg} is satisfied: thus, in both the cases Theorem~\ref{teo-main} can be applied to ensure bifurcation of closed orbits for \eqref{eq-cenfor}, when $\varepsilon$ is small enough. Our discussion below will take advantage of considerations and computations already presented in \cite{BDF2}, where the fixed period problem was investigated.

\begin{example}[Levi-Civita potential]\label{example-relativistic}
Let us consider the potential $V(r) = \kappa/r + \lambda/r^2$ for $\kappa,\lambda > 0$. The associated central force problem is
\begin{equation*}
\ddot x = -\kappa \dfrac{x}{|x|^3} - 2\lambda \dfrac{x}{|x|^4},
\end{equation*}
which was proposed by Levi-Civita \cite{LeCi-28} as a relativistic correction for the Kepler problem, see~\cite{BDF4} and the references therein.
As shown in \cite[Section~4.1]{BDF2}, the natural domain for the existence of non-circular closed orbits is
\begin{equation*}
\Lambda = \biggl{\{}(H,L)\in\mathbb{R}^{2} \colon L\in (\sqrt{2\lambda},+\infty), \; H\in \left(-\dfrac{\kappa^{2}}{2(L^{2}-2\lambda)}, 0\right) \biggr{\}}.
\end{equation*}
and, on this set,
\begin{equation*}
\Theta(H,L) = \dfrac{2\pi L}{\sqrt{L^{2}-2 \lambda}}.
\end{equation*}
Hence,
\begin{equation*}
\partial_{L} \Theta(H,L) = -\dfrac{4 \pi \lambda}{(L^{2}-2 \lambda)^{\frac{3}{2}}} <0,
\end{equation*}
so that Theorem~\ref{teo-main} can be applied.
We notice that in this way we obtain a generalization of \cite[Theorem~3.1]{BDF4}, where bifurcation from a fixed invariant torus was consider (cf.~the discussion at the end of \cite[Remark~1]{BDF4}).

Incidentally, we notice that the above formulas are valid also in the case $\lambda=0$ (i.e.~the Kepler problem) leading to the well-known fact that $\Theta(H,L)\equiv 2\pi$. Hence, in this case Theorem~\ref{teo-main} does not apply. Actually for the Kepler problem a result of this type is expected to be false (see the discussion in \cite{BoDaFe-21,BoOrZh-19} for more details).
\end{example}

\begin{example}[Homogeneous potentials]\label{example-hom-potentials}
Let us consider the homogeneous potential $V(r) = \kappa/(\alpha r^\alpha)$, where $\kappa>0$ and $\alpha \neq 0$.
The associated central force problem is
\begin{equation}\label{eq-hom}
\ddot x = -\kappa \dfrac{x}{|x|^{\alpha + 2}},
\end{equation}
which, as well known, admits non-circular periodic solutions if and only if $\alpha<2$, cf.~\cite[p.~7]{AmCo-93}. 
As shown in \cite[Section~4.2]{BDF2}, the natural domain for the existence of non-circular closed orbits is
\begin{equation*}
\Lambda = \biggl{\{}(H,L)\in\mathbb{R}^{2} \colon L\in (0,+\infty), \; H\in \left(-\dfrac{2-\alpha}{2\alpha}\kappa^{\frac{2}{2-\alpha}} L^{-\frac{2\alpha}{2-\alpha}}, 0\right) \biggr{\}}.
\end{equation*}
The map $\Theta(H,L)$ does not admit an explicit expression for a general value of $\alpha$, however, as discussed in \cite[Remark~4.2]{BDF2}, it is possible to prove that
\begin{equation*}
\partial_{L} \Theta(H,L)
\begin{cases}
>0, & \text{if $\alpha\in(-\infty,-2)\cup(0,1)$,}
\\
<0, & \text{if $\alpha\in(-2,0)\cup(1,2)$,}
\end{cases}
\end{equation*}
for all $(H,L)\in\Lambda$. Hence, Theorem~\ref{teo-main} applies for every $\alpha\in(-\infty,2)\setminus\{-2,0,1\}$. 

Let us notice that in the case $\alpha=0$ equation \eqref{eq-hom} still makes sense, coming from the logarithmic potential $V(r)=-\kappa \log(r)$. According to \cite{Ca-14} (see also \cite[Remark~4.3]{BDF2}), $\partial_{L} \Theta(H,L)>0$ and so again Theorem~\ref{teo-main} applies. The only excluded cases are thus $\alpha=-2$ and $\alpha=1$, corresponding respectively to the case of the Kepler problem and to the harmonic oscillator.
\end{example}

\section{Closed geodesics in a perturbed Schwarzschild metric}\label{section-sch}

In this section, we apply the general results of Section~\ref{section-main} to the study of periodic geodesic motions for perturbations of the Schwarzschild metric in general relativity. More precisely, we are going to investigate bifurcation of equatorial timelike geodesics for a stationary and equatorial perturbation of the Schwarzschild metric.

This section is organized as follows. In Section~\ref{sub-hamschw} we introduce the Hamiltonian formulation for the unperturbed model and we prove that, for suitable choices of the values of its first integrals, radially closed orbits exist.
In Section~\ref{section-4.2} we investigate the non-degeneracy of such orbits. Finally, in Section~\ref{section-4.3} we prove bifurcation of closed orbits for the perturbed problem, via an application of Theorem~\ref{teo-main}.

\subsection{The unperturbed problem: Hamiltonian formulation and equatorial timelike closed geodesics}\label{sub-hamschw}
	
We follow the notation and the presentation in \cite{Xue-21}.
Let us consider the $4$-dimensional space-time endowed with spherical coordinates 
\begin{equation}\label{sph-coord}
q = (q^0,q^1,q^2,q^3)=(\tau,r,\theta,\phi)\in \mathbb{R} \times (0,+\infty)\times [0,\pi]\times [0,2\pi),
\end{equation}
where $\tau$ is the coordinate time, $r$ is the polar radius, $\theta$ is the inclination angle (colatitude) and $\phi$ is the azimuthal angle.
In geometrized units ($G=c=1$) the Schwarzschild metric $g$ is given by
\begin{equation*}
\mathrm{d}s^{2} = g_{\mu \nu} \, \mathrm{d}q^\mu \mathrm{d}q^\nu = -\alpha(r)\, \mathrm{d}\tau^2+\alpha(r)^{-1}\, \mathrm{d}r^2+r^2\, \mathrm{d}\theta^2+r^2\sin^2\theta \, \mathrm{d}\phi^2,
\end{equation*}
where 
\begin{equation}\label{eq-defalpha}
\alpha(r)=1-\dfrac{2M}{r},
\end{equation}
with $M>0$ the mass of the blackhole.
The surface $\{r=2M\}$ is known as event horizon: in the region inside this surface, both light and particles are forced to the singularity $ r = 0$. Hence, in what follows we assume $r>2M$, and so $\alpha(r)>0$.

The Lagrangian $L_0$ of the free particle is 
\begin{equation}\label{eq-lagrunpert}
L_0 (q,\dot{q})= \dfrac{1}{2} g_{\mu \nu} \dot{q}^\mu \dot{q}^\nu = \dfrac{1}{2} \left(-\alpha(r)\, \dot{\tau}^2+\alpha(r)^{-1}\, \dot{r}^2+r^2 \, \dot{\theta}^2+r^2\sin^2 \theta \, \dot{\phi}^2\right),
\end{equation}
where the derivative is taken with respect to the proper time $t$ of the particle. 
By the classical Legendre transformation, the Hamiltonian $H_0$ conjugate to $L_0$ is given by
\begin{equation} \label{eq-hamrunpert}
H_0 (q,p)= \dfrac{1}{2} \left(-\alpha(r)^{-1}\, p_{\tau}^2+\alpha(r)\, p_{r}^2+\dfrac{1}{r^2} \, p_{\theta}^2+\dfrac{1}{r^2\sin^2 \theta} \, p_{\phi}^2\right),
\end{equation}
where $p=(p^{0}, p^1,p^2,p^3)=(p_{\tau},p_r,p_\theta,p_\phi)$ are the canonical momenta, i.e.~$p^{\mu} = \partial_{q^{\mu}} L_{0}$, for $\mu=0,1,2,3$.

The associated Hamiltonian system is then given by
\begin{equation} \label{eq-sisthamschwunpert}
\begin{cases}
\, \dot{\tau} = \displaystyle -\alpha(r)^{-1}\, p_\tau, \quad 
&\dot{p_\tau} = 0,
\vspace{3pt}\\
\, \dot{r} = \displaystyle\alpha(r)\, p_r,
&\dot{p_r} =  \displaystyle -\dfrac{1}{2}\, \alpha'(r)\, \alpha(r)^{-2} \, p_\tau^2-\dfrac{1}{2}\, \alpha'(r)\, p_r^2 +\dfrac{1}{r^3}\, p_\theta^2+\dfrac{1}{r^3 \sin^2 \theta}\, p_\phi^2,
\vspace{3pt}\\
\, \dot{\theta} =  \displaystyle \dfrac{1}{r^2}\, p_\theta, 
& \dot{p_\theta} =  \displaystyle \dfrac{1}{r^2 \sin^3 \theta}\, \cos \theta \, p_\phi^2,
\vspace{3pt}\\
\, \dot{\phi} = \displaystyle \dfrac{1}{r^2 \sin^2 \theta}\, p_{\phi}, 
&\dot{p_\phi} = 0.
\end{cases}
\end{equation}
Since $\dot{p_\tau} = 0$, we immediately deduce that $p_\tau$ is a first integral, the so-called particle energy; moreover, it is easy to deduce that $(\theta,p_\theta) \equiv (\pi/2,0)$ satisfies the pair of equations for $(\dot{\theta},\dot{p_\theta})$. As a consequence, denoting by $E\in \mathbb{R}$ the value of the first integral $p_\tau$, we can consider the $2$-degree of freedom system associated with the reduced Hamiltonian
\begin{equation} \label{eq-hamrunpertschwridotta0}
{\mathcal{H}_0} (r,\phi,p_r,p_\phi)= \dfrac{1}{2} \left(\alpha(r)\, p_{r}^2+\dfrac{1}{r^2} \, p_{\phi}^2\right)-\dfrac{1}{2}\, \dfrac{E^2}{\alpha(r)}.
\end{equation}
which describes the \textit{equatorial} solutions ($\theta \equiv \pi/2$) of \eqref{eq-sisthamschwunpert}.

From now on, we will focus on solutions satisfying the isoenergetic condition
\begin{equation} \label{eq-schwisoen1}
\mathcal{H}_0 (r,\phi,p_r,p_\phi)=-\dfrac{1}{2}.
\end{equation}
This means that the corresponding Lagrangian $L_{0}$ is fixed to be $-1/2$, namely
\begin{equation*}
g_{\mu \nu} \dot{q}^\mu \dot{q}^\nu=-1.
\end{equation*}
From a physical point of view this corresponds to the case of massive particles, that is \textit{timelike geodesics}.
We refer to Remark~\ref{rem-massless} for some consideration on the massless case (i.e.~lightlike geodesics).

Let us notice now that the hamiltonian \eqref{eq-hamrunpertschwridotta0} is of the form \eqref{eq-hamunpertastratta}, with $\alpha$ as in \eqref{eq-defalpha}, 
\begin{equation*}
V(r) = \dfrac{1}{2}\, \dfrac{E^2}{\alpha(r)},
\end{equation*}
and the role of the angle variable $\vartheta$ being played by the azimuthal angle $\phi$. Moreover, the effective potential defined in \eqref{eq-defW} is here given by
\begin{equation}\label{eq-potschw}
Z(r;H,L)=\left(\dfrac{1}{2}\, \dfrac{L^2}{r^2}-H\right)\alpha(r)-\dfrac{1}{2}\, E^2=W(r; H,L)-\dfrac{1}{2}\, E^2,
\end{equation}
where $H$ and $L$ denotes as usual the energy and the angular momentum and
\begin{equation*}
W(r;H,L)=\dfrac{1}{2}\, \left(\dfrac{L^2}{r^2}-2H\right)\alpha(r).
\end{equation*}

The rest of the section is devoted to show that the global picture of the radial dynamics under this effective potential is, for suitable values of $E$, the one discussed in Section~\ref{sec-prelastrattononpert}.
According to the isoenergetic condition \eqref{eq-schwisoen1}, we are interested in the case $H=-1/2$; hence, we can limit our analysis to the case $H<0$. On the other hand, we recall that without loss of generality $L>0$ (cf.~Section~\ref{sec-prelastrattononpert}).

Let us define
\begin{equation} \label{eq-deflambdatilda}
\widetilde{\Lambda} = \bigl{\{}(H,L)\in \mathbb{R}^2 \colon H<0,\, L>4\sqrt{2}\, \sqrt{-H}\, M \bigr{\}}.
\end{equation}
Some properties of the potential $W$ are collected in the next Lemma, whose proof is straightforward. See also Figure~\ref{fig-02} for a graphical representation of $W(\cdot; H,L)$.

\begin{lemma} \label{lem-potenzialeW1}
For every $(H,L)\in \widetilde{\Lambda}$, the map $W(\cdot; H,L)\colon (2M,+\infty)\to \mathbb{R}$ satisfies the following properties:
\begin{itemize}
\item[$(i)$] the limit relation
\begin{equation} \label{eq-limvpotschw}
\lim_{r\to +\infty} W(r;H,L)=-H
\end{equation}
holds true;
\item[$(ii)$] $W(\cdot; H,L)$ has exactly two critical points: precisely, a maximum point 
\begin{equation*}
r^* (H,L)=\dfrac{L^2- L\sqrt{L^2+24HM^2}}{-4HM}
\end{equation*}
and a minimum point
\begin{equation} \label{eq-defrpiu}
r_0 (H,L)=\dfrac{L^2+ L\sqrt{L^2+24HM^2}}{-4HM};
\end{equation}
\item[$(iii)$] the minimum and the maximum points of $W(\cdot ; H,L)$ satisfy 
\begin{equation*}
6M < r^*(H,L) < r_0(H,L)
\end{equation*}
and
\begin{equation} \label{eq-vpotschwmax}
0<W(r_0(H,L);H,L)<-H<W(r^*(H,L);H,L);
\end{equation}
\item[$(iv)$] the minimum point $r_0(H,L)$ is non-degenerate, i.e. 
\begin{equation} \label{eq-r0nondeg}
W''(r_0(H,L); H,L)>0.
\end{equation}
\end{itemize}
\end{lemma}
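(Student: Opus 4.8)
The plan is to read items $(i)$--$(iv)$ as an elementary one-variable study of $r\mapsto W(r;H,L)$ on $(2M,+\infty)$, using throughout the defining inequality of $\widetilde{\Lambda}$, which I rewrite as $L^2>32(-H)M^2$ (so in particular $L^2+24HM^2>8(-H)M^2>0$). Item $(i)$ is immediate, since $\alpha(r)\to1$ and $L^2/r^2\to0$ as $r\to+\infty$, whence $W(r;H,L)\to\tfrac{1}{2}(-2H)=-H$. For the rest I would first differentiate: writing $\alpha(r)=1-2M/r$ one finds
\begin{equation*}
W'(r;H,L)=\frac{q(r)}{r^4},\qquad q(r):=-2HM\,r^2-L^2 r+3ML^2,
\end{equation*}
so the critical points of $W(\cdot;H,L)$ are exactly the zeros of the quadratic $q$. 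Since $H<0$, the polynomial $q$ opens upward, and its discriminant equals $L^2\bigl(L^2+24HM^2\bigr)$, which is strictly positive on $\widetilde{\Lambda}$; hence $q$ has two distinct real roots, and a direct computation identifies them with the values $r^*(H,L)$ and $r_0(H,L)$ displayed in $(ii)$.

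Next I would settle the nature and location of the critical points. Because $q$ opens upward, $q>0$ outside $[r^*,r_0]$ and $q<0$ inside; reading this off $W'=q/r^4$ shows that $W(\cdot;H,L)$ increases on $(2M,r^*)$, decreases on $(r^*,r_0)$ and increases on $(r_0,+\infty)$, so $r^*$ is the only maximum point and $r_0$ the only minimum point, proving $(ii)$. The bracketing in $(iii)$ then comes from sign tests for $q$ at suitable radii, each reducing to the $\widetilde{\Lambda}$ constraint: one computes $q(3M)=18(-H)M^3>0$, and since the vertex $L^2/\bigl(4(-H)M\bigr)$ of $q$ lies to the right of $3M$ this forces both roots to the right of $3M$; moreover $q(6M)=3M\bigl(24(-H)M^2-L^2\bigr)<0$ places $6M$ strictly between the two roots, so that $3M<r^*<6M<r_0$. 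The auxiliary test $q(4M)=-M\bigl(L^2-32(-H)M^2\bigr)<0$ shows that $4M$, too, lies strictly between the roots, a fact used for the value estimates.

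The heart of the matter is the chain $0<W(r_0)<-H<W(r^*)$, and the clean device is to exploit the critical-point relation itself. From $q(r_c)=0$ at a critical radius $r_c\in\{r^*,r_0\}$ one gets $L^2=-2HM\,r_c^2/(r_c-3M)$ (recall $r_c>3M$), and substituting into $W$ collapses the expression to
\begin{equation*}
W(r_c;H,L)=-H\,\frac{(r_c-2M)^2}{r_c\,(r_c-3M)},
\end{equation*}
which is manifestly positive, giving the leftmost inequality. Subtracting $-H$ yields $W(r_c;H,L)+H=-H\,M(4M-r_c)/\bigl(r_c(r_c-3M)\bigr)$, whose sign is that of $4M-r_c$; combined with $r^*<4M<r_0$ from the previous step, this gives precisely $W(r_0)<-H<W(r^*)$, completing $(iii)$. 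For $(iv)$, since $q(r_0)=0$ one has $W''(r_0)=q'(r_0)/r_0^4$, and $q'(r_0)=-4HM\,r_0-L^2>0$ because $r_0$ is the larger root of the upward parabola $q$ (equivalently $-4HM\,r_0=L^2+L\sqrt{L^2+24HM^2}>L^2$); hence $W''(r_0)>0$.

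The only genuinely delicate point is the inequality $W(r^*)>-H$: it is the one that forces the exact constant in $\widetilde{\Lambda}$, since $W(r^*)=-H$ holds on the boundary $L^2=32(-H)M^2$ (where $r^*=4M$). The substitution above is what turns this otherwise cumbersome estimate into the transparent equivalence $W(r_c)>-H\Leftrightarrow r_c<4M$; everything else is bookkeeping on a single quadratic.
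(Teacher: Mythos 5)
Your verification is mathematically sound and is, in all likelihood, essentially the ``straightforward'' argument the authors had in mind: the paper offers no proof of this lemma (it is introduced with the phrase \emph{whose proof is straightforward}), so there is no finer-grained paper proof to compare against. Your reduction of $W'$ to the quadratic $q(r)=-2HM\,r^{2}-L^{2}r+3ML^{2}$, the discriminant computation $L^{2}(L^{2}+24HM^{2})>0$ on $\widetilde{\Lambda}$, the sign tests $q(3M)=18(-H)M^{3}>0$, $q(4M)=-M\bigl(L^{2}-32(-H)M^{2}\bigr)<0$, and the identities
\begin{equation*}
W(r_c;H,L)=-H\,\frac{(r_c-2M)^{2}}{r_c\,(r_c-3M)},
\qquad
W(r_c;H,L)+H=-H\,\frac{M(4M-r_c)}{r_c\,(r_c-3M)},
\end{equation*}
all check out; moreover, your key device --- substituting the critical-point relation $L^{2}=-2HM\,r_c^{2}/(r_c-3M)$ to collapse the critical values --- is precisely the manipulation the paper itself performs later, in the proof of the hidden-homogeneity formula \eqref{eq-v0maxrif}, so your route is fully aligned with the paper's own computations. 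Items $(i)$, $(ii)$, $(iv)$ and the chain \eqref{eq-vpotschwmax} are correctly and completely established, including the sharp observation that $W(r^{*})>-H$ degenerates exactly on the boundary $L^{2}=32(-H)M^{2}$ of $\widetilde{\Lambda}$.

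There is, however, one point you must not pass over silently: your sign test $q(6M)=3M\bigl(24(-H)M^{2}-L^{2}\bigr)<0$ proves $r^{*}(H,L)<6M<r_0(H,L)$, which flatly contradicts the first display of item $(iii)$ as printed, namely $6M<r^{*}(H,L)<r_0(H,L)$. Your computation is the correct one --- the printed inequality is an error in the statement. Indeed, on the boundary $L=4\sqrt{2}\,\sqrt{-H}\,M$ one finds $r^{*}=4M$ and $r_0=12M$, and on $\widetilde{\Lambda}$ itself $3M<r^{*}(H,L)<4M$ while $r_0(H,L)>12M$; presumably the authors intended $6M<r_0(H,L)$ (the ISCO-type bound for the stable radius) or $3M<r^{*}(H,L)$. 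The slip is harmless downstream, since the remainder of the paper uses only \eqref{eq-limvpotschw}, \eqref{eq-vpotschwmax}, \eqref{eq-r0nondeg}, the ordering $r^{*}<r_-<r_0<r_+$ in the proof of Proposition~\ref{lem-schworbitechiuse}, and $r_0>3M$ (needed for the denominators in the proof of \eqref{eq-v0maxrif}). But a write-up that establishes the negation of part of the stated claim should flag this explicitly: as it stands, your text purports to prove $(iii)$ while in fact proving --- correctly --- an amended version of it.
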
	

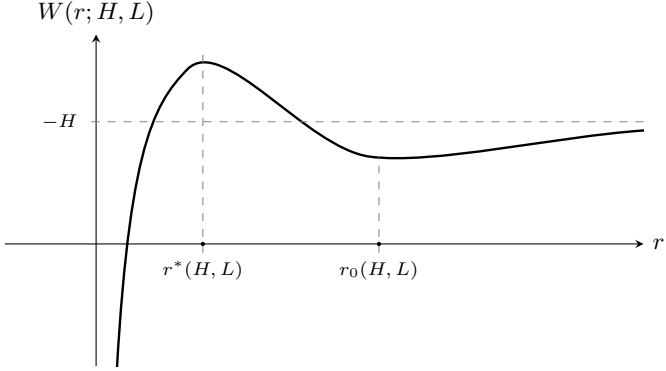
\begin{figure}[htb]
\definecolor{bdf-purple}{RGB}{85,0,130}
\definecolor{bdf-yellow}{RGB}{255,183,0}
\begin{tikzpicture}
\begin{axis}[
  tick label style={font=\scriptsize},
  axis y line=middle, 
  axis x line=middle,
  xtick={0},
  ytick={0},
  xticklabels={$0$},
  yticklabels={$0$},
  xlabel={\small $r$},
  ylabel={\small $W(r;H,L)$},
every axis x label/.style={
    at={(ticklabel* cs:1.0)},
    anchor=west,
},
every axis y label/.style={
    at={(ticklabel* cs:1.0)},
    anchor=south,
},
  width=10cm,
  height=6cm,
  xmin=-1,
  xmax=6,
  ymin=-0.7,
  ymax=1.2]
  \addplot [color=black,line width=0.9pt,smooth] coordinates {(0.05, -10) (0.2, -1) (1,1) (3,0.5) (6,0.65) (10,0.75)};
\draw [color=gray, dashed, line width=0.3pt] (axis cs:1.17,-0.05)--(axis cs:1.17,1.1);
\draw [color=gray, dashed, line width=0.3pt] (axis cs:3.1,-0.05)--(axis cs:3.1,0.5);
\draw [color=gray, dashed, line width=0.3pt] (axis cs:-0.05,0.7)--(axis cs:7,0.7);
\node at (axis cs: 1.17,-0.15) {\scriptsize{$r^*(H,L)$}};
\node at (axis cs: 3.1,-0.15) {\scriptsize{$r_0(H,L)$}};
\node at (axis cs: -0.4,0.7) {\scriptsize{$-H$}};
\fill (axis cs: 1.17,0) circle (0.8pt);
\fill (axis cs: 3.1,0) circle (0.8pt);
\end{axis}
\end{tikzpicture}
\captionof{figure}{Qualitative graph of $W(\cdot;H,L)$ for fixed $(H,L)\in \widetilde{\Lambda}$.}
\label{fig-02}
\end{figure}

Let us now consider, for $(H,L)\in \widetilde{\Lambda}$, the quantity
\begin{equation} \label{eq-defomegazero}
\omega_0(H,L)=-W(r_0(H,L);H,L).
\end{equation}
namely the opposite of the minimum value of $W(\cdot;H,L)$, and accordingly let us set
\begin{equation*}
\Lambda_E =\left\{(H,L)\in \widetilde{\Lambda}\colon -\omega_0(H,L)<\dfrac{1}{2}\, E^2 < -H\right\},
\end{equation*}
where we recall that by now $E$ is fixed in an arbitrarily way.
In Proposition~\ref{lem-schworbitechiuse} we are going to prove that when $E\in(\sqrt{{25}/{27}},1)$ the set $\Lambda_E$ is not empty and, for $(H,L)\in\Lambda_E$ the dynamics in the $(r,\dot{r})$-plane fits into the abstract framework of Section~\ref{sec-prelastrattononpert}.

To simplify the coming computation, let us introduce the auxiliary function $\zeta\colon (4\sqrt{2}, +\infty)\to \mathbb{R}$ defined by
\begin{equation} \label{eq-defzetaausiliaria}
\zeta (u)= \dfrac{u^4-20u^2+32+u(u^2-8)\sqrt{u^2-24}}{u^4-18u^2+u(u^2-6)\sqrt{u^2-24}}, \quad  u\in(4\sqrt{2},+\infty).
\end{equation}
It is straightforward to check that $\zeta$ satisfies 
\begin{equation} \label{eq-propzetaausiliaria}
\zeta' (u)>0, \; \text{for every $u\in(4\sqrt{2},+\infty)$,} \qquad \zeta \bigl( (4\sqrt{2}, +\infty) \bigr)=\left(\dfrac{25}{27},1\right).
\end{equation}
In particular, this proves that $\zeta$ is invertible on its domain.
The next technical lemma highlights a hidden homogeneity property.

\begin{lemma}
For every $(H,L)\in \widetilde{\Lambda}$, we have
\begin{equation} \label{eq-v0maxrif}
\omega_0(H,L)=H\, \zeta \left(\dfrac{L}{\sqrt{-H}\, M}\right).
\end{equation}
\end{lemma}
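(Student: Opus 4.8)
The plan is to exploit the homogeneity hidden in the definition of $W$ by passing to the dimensionless variable $u = L/(\sqrt{-H}\,M)$, which ranges exactly in $(4\sqrt{2},+\infty)$ — the domain of $\zeta$ — precisely when $(H,L)\in\widetilde{\Lambda}$. Writing $s = \sqrt{u^2-24}$ (real, since $u>4\sqrt{2}$ forces $u^2>32>24$), the first step is to rewrite the minimum point \eqref{eq-defrpiu} in these variables. Using $L^2+24HM^2 = (-H)M^2(u^2-24)$, so that $\sqrt{L^2+24HM^2} = M\sqrt{-H}\,s$, a direct substitution gives
\[
r_0(H,L) = \frac{M}{4}\,u\bigl(u+s\bigr).
\]
In particular $r_0$ factors as $M$ times a function of $u$ alone, which is the structural fact responsible for the homogeneity \eqref{eq-v0maxrif}.

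Next I would substitute this expression into $W(r_0;H,L) = \tfrac12\bigl(L^2/r_0^2-2H\bigr)\alpha(r_0)$. Setting $\rho := u(u+s)/4$, one has $L^2/r_0^2 = u^2(-H)/\rho^2$ and $2M/r_0 = 2/\rho$, so that $-H$ factors out cleanly and
\[
\omega_0(H,L) = -W(r_0;H,L) = \frac{H}{2}\left(\frac{u^2}{\rho^2}+2\right)\frac{\rho-2}{\rho}.
\]
Thus the claim \eqref{eq-v0maxrif} reduces to the purely algebraic identity $\tfrac12\bigl(u^2/\rho^2+2\bigr)(\rho-2)/\rho = \zeta(u)$, in which $H$, $L$ and $M$ no longer appear separately.

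To verify this identity, the key simplification is to set $P := u(u+s) = u^2+us$, so that $\rho = P/4$. Using $s^2 = u^2-24$ one finds $\tfrac12\bigl(u^2/\rho^2+2\bigr) = (P-8)/(P-12)$ and $(\rho-2)/\rho = (P-8)/P$, both carrying the numerator $P-8$, whence the bracket collapses to
\[
\frac{1}{2}\left(\frac{u^2}{\rho^2}+2\right)\frac{\rho-2}{\rho} = \frac{(P-8)^2}{P(P-12)}.
\]
Finally, expanding $P^2 = 2u^4-24u^2+2u^3s$ (again via $s^2=u^2-24$) yields $(P-8)^2 = 2\bigl(u^4-20u^2+32+u(u^2-8)s\bigr)$ and $P(P-12) = 2\bigl(u^4-18u^2+u(u^2-6)s\bigr)$; the common factor $2$ cancels, and one recognises exactly the numerator and denominator of $\zeta(u)$ in \eqref{eq-defzetaausiliaria}. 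The computations are elementary, and the only mild obstacle is bookkeeping the surd $s=\sqrt{u^2-24}$: introducing the single quantity $P=u(u+s)$ both trivialises the factorisation of the bracket and matches the otherwise opaque form of $\zeta$, so the surd never needs to be rationalised.
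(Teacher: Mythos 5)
Your proposal is correct and is essentially the paper's own argument: the paper likewise reduces \eqref{eq-v0maxrif} to a direct algebraic verification, first exploiting the stationarity equation $-2HM\,r_0^2 - L^2 r_0 + 3ML^2 = 0$ to get $\omega_0 = H\,\frac{(r_0-2M)^2}{r_0\,(r_0-3M)}$ and then substituting the explicit formula \eqref{eq-defrpiu} for $r_0$ --- and your key identity $P^2 = 2u^2(P-12)$ is precisely that quadratic relation rewritten in the dimensionless variable $u = L/(\sqrt{-H}\,M)$, since $r_0 = MP/4$. The only difference is presentational: you non-dimensionalize at the outset and carry out in full, via the bookkeeping quantity $P = u(u+s)$, the final ``straightforward computation'' that the paper leaves to the reader.
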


\begin{proof}
Let us fix $(H,L)\in \widetilde{\Lambda}$. From the fact that $r_0 (H,L)$ is a critical point of $W$ we deduce that 
\begin{equation*}
-2HM\, (r_0(H,L))^2-L^2 r_0(H,L)+3ML^2=0,
\end{equation*}
i.e. 
\begin{equation*}*\label{eq-quadratomax}
(r_0(H,L))^2=\dfrac{L^2 r_0 (H,L)-3ML^2}{-2HM}.
\end{equation*}
This implies that
\begin{equation*}
\dfrac{L^2}{(r_0(H,L))^2}=-2H\, \dfrac{r_0 (H,L)- 2M}{r_0(H,L)-3M}
\end{equation*}
and then, recalling \eqref{eq-defalpha} and \eqref{eq-defomegazero},
\begin{equation*}
\omega_0(H,L)=H \, \dfrac{r_0(H,L) - 2M}{r_0(H,L)-3M}\, \dfrac{r_0(H,L) - 2M}{r_0(H,L)}.
\end{equation*}
Taking into account \eqref{eq-defrpiu} and \eqref{eq-defzetaausiliaria}, a straighforward computation proves that \eqref{eq-v0maxrif} is satisfied.
\end{proof}

We are now in a position to state and prove the final result of the section, giving a more explicit description of the set $\Lambda_E$ and proving that the choice $\Lambda=\Lambda_E$ is admissible for hypothesis \ref{hp-star} (in particular, pairs $(H,L)\in \Lambda_E$ give rise to solutions of \eqref{eq-consenergia3} which are periodic in the radial component).

\begin{proposition} \label{lem-schworbitechiuse}
Let $E\in (\sqrt{25/27},1)$. Then, the set $\Lambda_E$ can be written as
\begin{equation*}
\Lambda_E=\left\{(H,L)\in \mathbb{R}^2 \colon \begin{array}{l}-\dfrac{27}{50}\, E^2<H<-\dfrac{1}{2}\, E^2,
\\ 4\sqrt{2}\, \sqrt{-H}\, M < L < \sqrt{-H}\, M\, \zeta^{-1} \left(\dfrac{E^2}{-2H}\right)
\end{array}
\right\};
\end{equation*}
Moreover, 
\begin{itemize}
\item[$(i)$] the function $r_{0}$ defined in \eqref{eq-defrpiu} satisfies \eqref{eq-ipo1};
\item[$(ii)$] there exists two continuous functions $r_-,r_+\colon \Lambda_{E}\to(0,+\infty)$ with $r_-<r_0<r_+$ and satisfying \eqref{eq-ipo2};
\end{itemize}
where $Z$ is as in \eqref{eq-potschw}.
\end{proposition}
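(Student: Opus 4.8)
The plan is to first convert the two inequalities defining $\Lambda_E$ into explicit bounds on $H$ and $L$, and then to read off properties $(i)$ and $(ii)$ from the qualitative profile of $W(\cdot;H,L)$ supplied by Lemma~\ref{lem-potenzialeW1}. For the explicit description, I would start from $-\omega_0(H,L) < \tfrac{1}{2} E^2 < -H$. The right inequality is just $H < -\tfrac{1}{2} E^2$. For the left one I would insert the homogeneity identity \eqref{eq-v0maxrif}, writing $-\omega_0(H,L) = -H\,\zeta(u)$ with $u := L/(\sqrt{-H}\,M)$; dividing by $-H>0$ turns it into $\zeta(u) < E^2/(-2H)$. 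Now $(H,L)\in\widetilde{\Lambda}$ forces $u>4\sqrt{2}$, and by \eqref{eq-propzetaausiliaria} the map $\zeta$ is a strictly increasing bijection of $(4\sqrt{2},+\infty)$ onto $(25/27,1)$; hence a compatible $u$ exists iff $E^2/(-2H) > 25/27$, that is $H > -\tfrac{27}{50}E^2$, and the inequality then reads $u < \zeta^{-1}(E^2/(-2H))$, the argument lying in $(25/27,1)$ precisely because $H<-\tfrac{1}{2}E^2$. Translating back through $u = L/(\sqrt{-H}\,M)$ yields the stated bounds on $L$ and the range $-\tfrac{27}{50}E^2 < H < -\tfrac{1}{2}E^2$; this interval is nonempty, and it contains the timelike value $H=-\tfrac{1}{2}$ exactly when $E\in(\sqrt{25/27},1)$, which is where the hypothesis on $E$ is used.

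For $(i)$, since $Z = W - \tfrac{1}{2}E^2$ gives $Z'=W'$ and $Z''=W''$, the three requirements in \eqref{eq-ipo1} at $r_0$ follow at once: $Z'(r_0)=W'(r_0)=0$ and $Z''(r_0)=W''(r_0)>0$ are Lemma~\ref{lem-potenzialeW1}$(ii)$ and \eqref{eq-r0nondeg}, while $Z(r_0)=W(r_0)-\tfrac{1}{2}E^2 = -\omega_0(H,L)-\tfrac{1}{2}E^2 < 0$ is exactly the left defining inequality of $\Lambda_E$.

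The main step, and the one I expect to be most delicate, is $(ii)$, which requires a careful global analysis of $W(\cdot;H,L)$ on $(2M,+\infty)$. A direct computation gives the boundary behaviour $W(r)\to 0^+$ as $r\to 2M^+$ (because $\alpha(2M)=0$ while $L^2/r^2-2H>0$) and $W(r)\to -H$ as $r\to+\infty$ by \eqref{eq-limvpotschw}. Combined with Lemma~\ref{lem-potenzialeW1}, and using that $r^*$ and $r_0$ are the only critical points, this shows that $W$ is strictly increasing on $(2M,r^*)$ from $0$ up to the maximum $W(r^*)>-H$, strictly decreasing on $(r^*,r_0)$ down to the minimum $W(r_0)\in(0,-H)$, and strictly increasing on $(r_0,+\infty)$ back toward $-H$. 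Since on $\Lambda_E$ the level satisfies $W(r_0)<\tfrac{1}{2}E^2<-H<W(r^*)$, the intermediate value theorem together with strict monotonicity produces exactly one zero of $Z$ in each of $(2M,r^*)$, $(r^*,r_0)$ and $(r_0,+\infty)$. The crucial observation is that the leftmost zero lies inside the barrier at $r^*$ and does not bound the well around $r_0$, so it must be discarded: I would set $r_-(H,L)$ to be the zero in $(r^*,r_0)$ and $r_+(H,L)$ the zero in $(r_0,+\infty)$. Then $r_-<r_0<r_+$ and $Z(r_\pm)=0$, and since $[r_-,r_0)\subset(r^*,r_0)$ gives $Z'=W'<0$ while $(r_0,r_+]\subset(r_0,+\infty)$ gives $Z'=W'>0$, one obtains $Z'(r)(r-r_0)>0$ on $[r_-,r_+]\setminus\{r_0\}$, that is, \eqref{eq-ipo2}.

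Finally, continuity of $r_\pm$ would follow from the implicit function theorem: at every $(H,L)\in\Lambda_E$ one has $\partial_r Z(r_\pm;H,L)=W'(r_\pm)\neq 0$ since $r_\pm\notin\{r^*,r_0\}$, so each zero depends smoothly on $(H,L)$ locally, and the global uniqueness of the zero within each interval $(r^*,r_0)$, $(r_0,+\infty)$ (whose endpoints vary continuously with $(H,L)$) patches these local branches into continuous — in fact $\mathcal{C}^\infty$, consistently with Remark~\ref{remark-regolarita} — functions on all of $\Lambda_E$. This shows that $\Lambda=\Lambda_E$ is admissible for hypothesis~\ref{hp-star}.
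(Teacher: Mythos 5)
Your proposal is correct and follows essentially the same route as the paper: the explicit description of $\Lambda_E$ is obtained from the homogeneity identity \eqref{eq-v0maxrif} together with the monotonicity and bijectivity of $\zeta$ in \eqref{eq-propzetaausiliaria} (with $H<-\tfrac{1}{2}E^2$ guaranteeing that $E^2/(-2H)$ lies in the range of $\zeta$), while points $(i)$ and $(ii)$ follow from the profile of $W(\cdot;H,L)$ in Lemma~\ref{lem-potenzialeW1} plus an intermediate-value/monotonicity argument, exactly as in the paper. If anything you are slightly more careful on two minor points: you account for the third root of $W=E^2/2$ in $(2M,r^*(H,L))$ and explicitly discard it (the paper states that there are ``exactly two solutions,'' implicitly restricting to $r>r^*(H,L)$), and you justify the continuity of $r_\pm$ via the implicit function theorem, using $W'(r_\pm)\neq 0$, where the paper merely asserts it from strict monotonicity.
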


\begin{proof}
Recalling \eqref{eq-deflambdatilda} and \eqref{eq-v0maxrif}, we have 
\begin{equation} \label{eq-lambdaErif}
\Lambda_E=\left\{(H,L)\in \mathbb{R}^2\colon L>4\sqrt{2}\, \sqrt{-H}\, M, \, H<-\dfrac{1}{2}\, E^2, \, \zeta \left(\dfrac{L}{\sqrt{-H}\, M}\right)<\dfrac{E^2}{-2H}
\right\}.
\end{equation}
From the fact that $\zeta ((4\sqrt{2}, +\infty))=\left(25/27,1\right)$ (cf.~\eqref{eq-propzetaausiliaria}), we infer that
\begin{equation*}
\dfrac{E^2}{-2H} > \dfrac{25}{27}, \quad \text{for every $(H,L)\in \Lambda_E$,}
\end{equation*}
and so
\begin{equation*}
H>-\dfrac{27}{50}\, E^2, \quad \text{for every $(H,L)\in \Lambda_E$.}
\end{equation*}
Notice now that, when the above condition is fulfilled, from \eqref{eq-lambdaErif} we obtain
\begin{equation*}
L < \sqrt{-H}\, M\, \zeta^{-1} \left(\dfrac{E^2}{-2H}\right)
\end{equation*}
and this conclude the proof of the first part of the statement.

As for the second part, point $(ii)$ directly follows from \eqref{eq-r0nondeg}, recalling the definitions of $r_0$ and $\Lambda_E$. On the other hand, let us observe that the definition of $\Lambda_E$ together with \eqref{eq-limvpotschw} and \eqref{eq-vpotschwmax} imply that for every $(H,L)\in \Lambda_E$ the equation 
\begin{equation*}
W(r;H,L)=\dfrac{1}{2}\, E^2
\end{equation*}
has exactly two solutions $r_\pm (H,L)$ with $r^{*}(H,L) < r_- (H,L) < r_0 (H,L) < r_+ (H,L)$. Moreover, $W'(r;H,L)(r-r_0(H,L))>0$, for every $r\in [r_-(H,L),r_+(H,L)]\setminus\{r_{0}(H,L)\}$, since the only critical point of $W(\cdot,H,L)$ in $[r_-(H,L),r_+(H,L)]$ is $r_{0}(H,L)$, as observed in Lemma~\ref{lem-potenzialeW1}.
This also ensures that $r_\pm$ are continuous functions. Recalling \eqref{eq-potschw}, we conclude that \eqref{eq-ipo2} is fulfilled.
\end{proof}

\begin{remark} \label{rem-altrocaso}
Let us observe that the effective potential $W(\cdot; H,L)$ has a minimum point if and only if $L>2\sqrt{3}\, \sqrt{-H}\, M$. As a consequence, bounded orbits exist just for these values of the angular momemtum. In the previous discussion, for simplicity, we have considered the case $L>4\sqrt{2}\, \sqrt{-H}\, M$; when $2\sqrt{3}\, \sqrt{-H}\, M<L \leq 4\sqrt{2}\, \sqrt{-H}\, M$ the only difference is that the maximum of the potential $W(\cdot; H,L)$ is less or equal than the asymptotic value $-H$ (contrary to \eqref{eq-vpotschwmax}). This implies that a more careful choice of the values $E$ giving rise to bounded orbits has to be made; however, for these values it is possible to prove an analogous of Lemma~\ref{lem-schworbitechiuse}. We leave the details to the reader.
\hfill$\lhd$
\end{remark}

Recalling now that we are interested in periodic solutions satisfying the isoenergetic condition \eqref{eq-schwisoen1}, we observe that
\begin{equation}\label{eq-defLE0}
\Lambda_E\cap (\{-1/2\}\times \mathbb{R}) =(4M,L_E),
\end{equation}
where $L_E$ is given by
\begin{equation*}
L_E=\dfrac{M}{\sqrt{2}}\, \zeta^{-1} (E^2).
\end{equation*}

\subsection{The unperturbed problem: the non-degeneracy condition}\label{section-4.2}

According to Proposition~\ref{lem-schworbitechiuse}, we are allowed to consider the map $\Theta\colon \Lambda_E\to \mathbb{R}$ defined in \eqref{eq-deltaphi2}. Recalling \eqref{eq-potschw}, we thus have
\begin{equation} \label{eq-Theta2}
\Theta (H,L) =\sqrt{2}\, L\, \int_{1/r_+(H,L)}^{1/r_-(H,L)} \dfrac{\mathrm{d}r}{\sqrt{E^2/2-W(1/r;H,L)}},
\end{equation}
for every $(H,L)\in \Lambda_E$.
Notice that, according to Remark~\ref{remark-regolarita}, the map $\Theta$ is analytic on the domain $\Lambda_E$; in particular, recalling \eqref{eq-defLE0}, the one-variable function $\Theta(-1/2,\cdot)$ is analytic on the interval $(4M,L_E)$.

The aim of this section is to prove that the non-degeneracy condition
\begin{equation}\label{eq-nondeg-Schw}
\partial_L \Theta\biggl{(}-\dfrac{1}{2},L^*\biggr{)} \neq 0
\end{equation}
required for the application of Theorem~\ref{teo-main}, 
is satisfied for a dense set of values $L^{*}$ in the interval $(4M,L_E)$, cf.~Proposition~\ref{teo-schwnonperturbato}.

The strategy will be the following: at first, we prove, via some time-map arguments along the lines of \cite[Appendix]{BDF2}, that 
\begin{equation*}
\lim_{L\to (L_E)^-} \partial_L \Theta \biggl{(}-\dfrac{1}{2},L\biggr{)} \neq 0.
\end{equation*}
Secondly, we exploit the analyticity of $\Theta$ to reach the conclusion.

We start by introducing some preliminary notation.
First of all, let us notice that from \eqref{eq-Theta2} we recognize that $\Theta$ can be written as
\begin{equation} \label{eq-thetaP}
\Theta (H,L)=L\, P(H,L),\quad \text{for all $(H,L)\in \Lambda_E$,}
\end{equation}
where $P$ is the period map of the nonlinear oscillator $\tfrac{1}{2} \dot{r}^{2} + \widetilde{W}(r; H,L) = 0$, where
\begin{equation*}
\widetilde{W}(r; H,L)=W(1/r;H,L).
\end{equation*} 
From Lemma~\ref{lem-potenzialeW1} it is immediate to see that $\widetilde{W}(\cdot;H,L)$ has a local non-degenerate minimum point at $1/r_0(H,L)$, for every $(H,L)\in \Lambda_E$. The corresponding minimum is $-\omega_0(H,L)$, where $\omega_0(H,L)$ is given in \eqref{eq-defomegazero}. 

Now, for $E\in (\sqrt{25/27},1)$, let us define the set
\begin{equation*}
\Gamma_E=\bigl{\{}(r;H,L)\in\mathbb{R}^{3} \colon (H,L)\in \Lambda_E, \; r\in (1/r_+(H,L),1/r_-(H,L))\bigr{\}}
\end{equation*}
and the function $\Omega\colon \Gamma_E\to \mathbb{R}$ by 
\begin{equation} \label{eq-defomegatrasl}
\Omega (r; H,L)=\widetilde{W}(r; H,L)+\omega_0(H,L).
\end{equation}
The map $P$ can then be written as
\begin{equation*}
P(H,L) = \sqrt{2} \int_{1/r_+(H,L)}^{1/r_-(H,L)} \dfrac{\mathrm{d}r}{\sqrt{E^2/2+\omega_0 (H,L)-\Omega(r;H,L)}},\quad (H,L)\in \Lambda_E.
\end{equation*}
We now define the map
\begin{equation*}
h(r;H,L) = \mathrm{sgn} (r-1/r_{0}(H,L)) \sqrt{\Omega(r;H,L)}, \quad (r,H,L)\in \Gamma_E.
\end{equation*}
By adapting the arguments in \cite[Appendix]{BDF2}, it is possible to show that $h(\cdot;H,L)$ is invertible in $(1/r_+(H,L),1/r_-(H,L))$, for every $(H,L)\in \Lambda_E$. Introducing the change of variable
\begin{equation} \label{eq-cambiovar}
r=r(\theta;H,L):=h^{-1}(\sqrt{E^2/2+\omega_{0}(H,L)}\sin (\theta; H,L),
\quad \theta \in \biggl{[}-\frac{\pi}{2},\frac{\pi}{2}\biggr{]},
\end{equation}
the map $P$ can be written as
\begin{equation} \label{eq-T4}
P(H,L)=\sqrt{2} \int_{-\frac{\pi}{2}}^{\frac{\pi}{2}} 
\dfrac{1}{h'(r; H, L)}\Bigg{|}_{r=r(\vartheta;H,L)}\,\mathrm{d}\vartheta,
\end{equation}
for every $(H,L)\in \Lambda_E$.

From now on, we will use the notation 
\begin{equation*}
\Omega_{0}^{(i)} (H,L) = \Omega^{(i)}(1/r_{0}(H,L);H,L), \quad i\in \{0,1,2,3,4\}, \, (H,L)\in \Lambda_E.
\end{equation*}
and
\begin{equation}\label{def-omega0e}
\Omega_{0,E}^{(i)} = \Omega_{0}^{(i)}(-1/2,L_{E}), \quad i\in \{0,1,2,3,4\}.
\end{equation}

The following result, which is a straightforward modification of \cite[Lemma~A.3]{BoDaFe-21}, provides a formula for $\partial_L P (H,L)$.

\begin{lemma} \label{prop-mappetempo1}
Assume that
\begin{equation} \label{eq-ipoappendice}
\Omega^{(2)}_0(H,L)>0, \quad \text{for every $(H,L)\in \Lambda_E$.}
\end{equation}
Then, we have 
\begin{align}
\label{eq-dermappetempo}
&\partial_L P (H,L) =
\\
&= \dfrac{\partial_L \omega_0 (H,L)}{\sqrt{2}} \int_{-\frac{\pi}{2}}^{\frac{\pi}{2}} 
\dfrac{3 (h''(r; H,L))^{2} - h'(r; H,L) h'''(r; H,L)}{(h'(r; H,L))^{5}}  \Bigg{|}_{r=r(\vartheta;H,L)} \, \cos^{2} \vartheta  \, \mathrm{d}\vartheta 
\\
&\quad + \displaystyle \sqrt{2} \int_{-\frac{\pi}{2}}^{\frac{\pi}{2}} 
\dfrac{h''(r; H,L) \partial_{L}h(r; H,L) - h'(r; H,L)\partial_{L} h'(r; H,L)}{(h'(r; H,L))^{3}} \Bigg{|}_{r=r(\vartheta;H,L)}\, \mathrm{d}\vartheta, 
\end{align}
for every $(H,L)\in \Lambda_E$.
\end{lemma}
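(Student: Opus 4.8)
The plan is to obtain \eqref{eq-dermappetempo} by differentiating the representation \eqref{eq-T4} of $P$ under the integral sign. Writing for brevity $A = A(H,L) = E^2/2 + \omega_0(H,L)$, which is strictly positive on $\Lambda_E$ since $(H,L)\in\Lambda_E$ forces $-\omega_0 < E^2/2$, the change of variable \eqref{eq-cambiovar} is defined implicitly by
\[
h(r(\vartheta;H,L);H,L) = \sqrt{A(H,L)}\,\sin\vartheta, \qquad \vartheta\in\Bigl[-\tfrac{\pi}{2},\tfrac{\pi}{2}\Bigr].
\]
The hypothesis \eqref{eq-ipoappendice} guarantees that $h'(\cdot;H,L)$ stays bounded away from zero on the relevant interval (near $1/r_0(H,L)$ one has $h'(1/r_0;H,L) = \sqrt{\Omega_0^{(2)}(H,L)/2} > 0$), so that the integrand $\vartheta\mapsto 1/h'(r(\vartheta;H,L);H,L)$ in \eqref{eq-T4} is a smooth, bounded function of $(\vartheta,L)$ on the compact interval $[-\pi/2,\pi/2]$. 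Differentiation under the integral sign is therefore legitimate, exactly as in \cite[Appendix]{BDF2} and \cite[Lemma~A.3]{BoDaFe-21}.

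Next I would carry out the differentiation itself. The integrand depends on $L$ both \emph{explicitly}, through $h'(\cdot;H,L)$, and \emph{implicitly}, through $r(\vartheta;H,L)$; by the chain rule,
\[
\partial_L\!\left(\frac{1}{h'}\right) = -\frac{1}{(h')^2}\Bigl( h''\,\partial_L r + \partial_L h'\Bigr),
\]
where all functions are evaluated at $r = r(\vartheta;H,L)$. The quantity $\partial_L r$ is computed by differentiating the defining relation above with respect to $L$, which gives
\[
\partial_L r = \frac{1}{h'}\left(\frac{\partial_L\omega_0}{2\sqrt{A}}\,\sin\vartheta - \partial_L h\right).
\]
Substituting and regrouping, the contribution coming from $\partial_L h$ and $\partial_L h'$ assembles into the second integral in \eqref{eq-dermappetempo}, since $\frac{h''\,\partial_L h}{(h')^3} - \frac{\partial_L h'}{(h')^2} = \frac{h''\,\partial_L h - h'\,\partial_L h'}{(h')^3}$, while the remaining term is the overall factor $\sqrt{2}$ times
\[
-\frac{\partial_L\omega_0}{2\sqrt{A}}\int_{-\frac{\pi}{2}}^{\frac{\pi}{2}}\frac{h''}{(h')^3}\,\sin\vartheta\;\mathrm{d}\vartheta .
\]

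It then remains to transform this last term into the first integral in \eqref{eq-dermappetempo} by an integration by parts in $\vartheta$. Differentiating the defining relation with respect to $\vartheta$ yields $h'\,\tfrac{\mathrm{d}r}{\mathrm{d}\vartheta} = \sqrt{A}\cos\vartheta$, whence the chain rule gives
\[
\frac{\mathrm{d}}{\mathrm{d}\vartheta}\!\left(\frac{h''}{(h')^3}\right) = \frac{h'''h' - 3(h'')^2}{(h')^4}\cdot\frac{\sqrt{A}\cos\vartheta}{h'} .
\]
Integrating $\int \frac{h''}{(h')^3}\sin\vartheta\,\mathrm{d}\vartheta$ by parts against the antiderivative $-\cos\vartheta$ of $\sin\vartheta$, the boundary term vanishes because $\cos(\pm\pi/2)=0$ (and $h''/(h')^3$ is bounded, again by \eqref{eq-ipoappendice}), leaving
\[
\int_{-\frac{\pi}{2}}^{\frac{\pi}{2}}\frac{h''}{(h')^3}\sin\vartheta\,\mathrm{d}\vartheta = \sqrt{A}\int_{-\frac{\pi}{2}}^{\frac{\pi}{2}}\frac{h'h''' - 3(h'')^2}{(h')^5}\cos^2\vartheta\,\mathrm{d}\vartheta .
\]
Multiplying this identity by $-\sqrt{2}\,\partial_L\omega_0/(2\sqrt{A})$, the coefficient of the corresponding term in $\partial_L P$, produces precisely $\frac{\partial_L\omega_0}{\sqrt{2}}\int \frac{3(h'')^2 - h'h'''}{(h')^5}\cos^2\vartheta\,\mathrm{d}\vartheta$, completing the identification with \eqref{eq-dermappetempo}.

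The main obstacle is the bookkeeping of the two sources of $L$-dependence together with the justification that differentiation under the integral and the integration by parts are admissible; both rest on the non-degeneracy hypothesis \eqref{eq-ipoappendice}, which keeps $h'$ away from zero and hence all integrands bounded on $[-\pi/2,\pi/2]$, with the vanishing of the boundary term secured by the factor $\cos\vartheta$.
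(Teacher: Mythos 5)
Your computation is correct and follows essentially the same route as the paper's proof, which is simply deferred to \cite[Lemma~A.3]{BoDaFe-21}: differentiate the representation \eqref{eq-T4} under the integral sign, split the $L$-dependence into the explicit part through $h$ and the implicit part through the level $\sqrt{E^2/2+\omega_0}\sin\vartheta$ in \eqref{eq-cambiovar}, and convert the resulting $\sin\vartheta$ term into the $\cos^{2}\vartheta$ integral by the integration by parts you perform, with \eqref{eq-ipoappendice} guaranteeing (via the smoothness of $h,\dots,h'''$ across $1/r_0$ and the bound $h'>0$) that both the differentiation under the integral and the vanishing of the boundary term are legitimate. Your sign and constant bookkeeping ($\sqrt{2}/2 = 1/\sqrt{2}$, the flip from $h'h'''-3(h'')^2$ to $3(h'')^2-h'h'''$) checks out against \eqref{eq-dermappetempo}.
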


Taking advantage of the previous lemma, we can provide crucial asymptotic estimates on $P (-1/2,\cdot)$ as $L\to (L_E)^-$, which are on the lines of \cite[Proposition~A.1]{BoDaFe-21}.

\begin{lemma} \label{prop-mappetempo2}  
Assume \eqref{eq-ipoappendice}. The following relations hold true:
\begin{align*}
\lim_{L\to (L_E)^-} P\left(-\frac{1}{2},L\right) &= \dfrac{2\pi}{(\Omega^{(2)}_{0,E})^{\frac{1}{2}}},
\\
\lim_{L\to (L_E)^-} \partial_L P \left(-\frac{1}{2},L\right) &= \dfrac{\pi}{12}\, \dfrac{5\bigl{(}\Omega^{(3)}_{0,E}\bigr{)}^2-3\Omega^{(2)}_{0,E}\, \Omega^{(4)}_{0,E}}{\bigl{(}\Omega^{(2)}_{0,E}\bigr{)}^{\frac{7}{2}}}\, \partial_L \omega_0 (-1/2,L_E) -\pi \, \dfrac{\partial_L \Omega^{(2)}_{0,E}}{\bigl{(}\Omega^{(2)}_{0,E}\bigr{)}^{\frac{3}{2}}}.
\end{align*}
\end{lemma}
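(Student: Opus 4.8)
The plan is to read the limit $L \to (L_E)^-$ as the \emph{vanishing-amplitude} limit of the radial oscillator. Indeed, by \eqref{eq-v0maxrif} together with the definition $L_E = \tfrac{M}{\sqrt{2}}\zeta^{-1}(E^2)$ one gets $\omega_0(-1/2,L_E) = -E^2/2$, so the quantity $A(L) := E^2/2 + \omega_0(-1/2,L)$, which is positive on $(4M,L_E)$ (cf.~\eqref{eq-defLE0}) and measures the energy above the bottom of the well, satisfies $A(L) \to 0^+$ as $L \to (L_E)^-$. Consequently the radial orbit collapses onto the circular orbit $r \equiv r_0(-1/2,L_E)$, and through the change of variable \eqref{eq-cambiovar} the point $r(\vartheta;-1/2,L) = h^{-1}(\sqrt{A(L)}\sin\vartheta)$ tends to $1/r_0(-1/2,L_E)$ \emph{uniformly} in $\vartheta \in [-\pi/2,\pi/2]$. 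Under the non-degeneracy hypothesis \eqref{eq-ipoappendice}, the signed square root $h$ is a smooth diffeomorphism near the minimum with $h'(1/r_0)\neq 0$, so the integrands in \eqref{eq-T4} and in the formula of Lemma~\ref{prop-mappetempo1} stay uniformly bounded as $A\to 0$; dominated convergence then lets me pass to the limit under the integral sign, evaluating every factor at $r=1/r_0(-1/2,L_E)$.

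The computation then reduces to expressing the derivatives of $h$ at the minimum in terms of the $\Omega_{0,E}^{(i)}$. Expanding $\Omega(r;H,L) = \tfrac12\Omega_0^{(2)}s^2 + \tfrac16\Omega_0^{(3)}s^3 + \tfrac{1}{24}\Omega_0^{(4)}s^4 + \cdots$ in $s = r - 1/r_0$ (recall $\Omega_0^{(0)}=\Omega_0^{(1)}=0$) and taking the square root gives $h = (\Omega_0^{(2)}/2)^{1/2}\bigl(s + \tfrac{a}{2}s^2 + \cdots\bigr)$ with $a = \Omega_0^{(3)}/(3\Omega_0^{(2)})$, whence $h'(1/r_0) = (\Omega_0^{(2)}/2)^{1/2}$ together with explicit expressions for $h''(1/r_0)$ and $h'''(1/r_0)$ in terms of $\Omega_0^{(2)},\Omega_0^{(3)},\Omega_0^{(4)}$. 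The first limit is now immediate from \eqref{eq-T4}: passing to the limit, $P(-1/2,L)\to \sqrt{2}\,(\Omega_{0,E}^{(2)}/2)^{-1/2}\int_{-\pi/2}^{\pi/2}\mathrm{d}\vartheta = 2\pi/(\Omega_{0,E}^{(2)})^{1/2}$, as claimed.

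For the second limit I start from the formula of Lemma~\ref{prop-mappetempo1} and pass to the limit in each of its two integrals. Since $r(\vartheta)\to 1/r_0$, the only surviving $\vartheta$-dependence is the explicit $\cos^2\vartheta$ in the first integral, and I use $\int_{-\pi/2}^{\pi/2}\cos^2\vartheta\,\mathrm{d}\vartheta = \pi/2$ and $\int_{-\pi/2}^{\pi/2}\mathrm{d}\vartheta = \pi$. Substituting the values of $h',h'',h'''$ found above, the first integral produces exactly $\tfrac{\pi}{12}\bigl(5(\Omega_{0,E}^{(3)})^2 - 3\Omega_{0,E}^{(2)}\Omega_{0,E}^{(4)}\bigr)(\Omega_{0,E}^{(2)})^{-7/2}\,\partial_L\omega_0(-1/2,L_E)$. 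For the second integral I must evaluate $\partial_L h$ and $\partial_L h'$ at $r=1/r_0$: since the base point $1/r_0$ itself depends on $L$, differentiating the expansion at fixed $r$ produces extra terms proportional to $\partial_L(1/r_0)$. A short computation shows that these terms cancel in the combination $h''\,\partial_L h - h'\,\partial_L h'$, which at $r=1/r_0$ reduces to $-\tfrac14\,\partial_L\Omega_0^{(2)}$; dividing by $(h')^3 = (\Omega_0^{(2)}/2)^{3/2}$ and multiplying by the prefactor $\sqrt{2}\,\pi$, the second integral contributes $-\pi\,\partial_L\Omega_{0,E}^{(2)}/(\Omega_{0,E}^{(2)})^{3/2}$. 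Summing the two contributions gives the stated formula for $\lim_{L\to(L_E)^-}\partial_L P(-1/2,L)$.

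I expect two steps to carry the real weight, both along the template of \cite[Proposition~A.1]{BoDaFe-21} and \cite[Appendix]{BDF2}. The genuinely delicate bookkeeping is the evaluation of $\partial_L h(1/r_0)$ and $\partial_L h'(1/r_0)$ with a \emph{moving} minimizer, and the verification that the $\partial_L(1/r_0)$–contributions cancel: this is precisely the mechanism by which only $\partial_L\Omega_{0,E}^{(2)}$ and $\partial_L\omega_0$ survive, all derivatives of $1/r_0$ disappearing. The second, more technical than conceptual, is the rigorous justification of the interchange of limit and integral, which rests entirely on \eqref{eq-ipoappendice}: it guarantees $h'(1/r_0)>0$ and the $\mathcal{C}^\infty$–smoothness of $h$ up to the turning points for $A$ small, hence the uniform bounds needed for dominated convergence.
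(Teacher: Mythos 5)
Your proposal is correct and follows essentially the same route as the paper's proof: the vanishing-amplitude limit $E^2/2+\omega_0(-1/2,L)\to 0^+$, dominated convergence in \eqref{eq-T4} and \eqref{eq-dermappetempo}, the values of $h',h'',h'''$ and of $\partial_L h,\partial_L h'$ at the minimizer (which the paper imports as a straightforward modification of \cite[Lemma~A.1]{BoDaFe-21} rather than rederiving via Taylor expansion, as you do), and in particular the same key cancellation of the $\partial_L(1/r_0)$-terms in $h''\,\partial_L h - h'\,\partial_L h'=-\tfrac14\partial_L\Omega_0^{(2)}$, followed by $\int_{-\pi/2}^{\pi/2}\cos^2\vartheta\,\mathrm{d}\vartheta=\pi/2$ and $\int_{-\pi/2}^{\pi/2}\mathrm{d}\vartheta=\pi$. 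A minor point in your favor: your identification of the limit point as $r(\vartheta;-1/2,L)\to 1/r_0(-1/2,L_E)$ is the correct statement, whereas the paper's displayed claim $\lim_{L\to(L_E)^-}r(\theta;-1/2,L)=0$ is evidently a typo (since $h^{-1}(0)=1/r_0$).
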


\begin{proof}
Let us first observe that, by the definition of $L_E$, from \eqref{eq-cambiovar} we deduce that
	\begin{equation*}
	\lim_{L\to (L_E)^-} r(\theta; -1/2,L)=0,
	\end{equation*}
for every $\theta \in [-\pi/2,\pi/2]$. Arguing as in the proof of \cite[Proposition~A.1]{BoDaFe-21}, it is possible to show that passing to the limit in the integrals in \eqref{eq-T4} and \eqref{eq-dermappetempo} is allowed. 

Now, we observe that a straightforward modification of the arguments in \cite[Lemma~A.1]{BoDaFe-21} proves that $h, h', h'', h'''\in \mathcal{C}(\Gamma_E)$, $\partial_H h, \partial_H h', \partial_L h, \partial_L h'\in \mathcal{C}(\Gamma_E)$ and that, for every $(H,L)\in \Lambda_E$, the relations
\begin{equation*}
\begin{aligned}
h'(1/r_{0}(H,L);H,L) &= \dfrac{(\Omega^{(2)}_{0}(H,L))^{\frac{1}{2}}}{\sqrt{2}},
\\
h''(1/r_{0}(H,L);H,L) &= \dfrac{\Omega^{(3)}_{0}(H,L)}{3\sqrt{2} \, ( \Omega^{(2)}_{0}(H,L))^{\frac{1}{2}}},
\\
h'''(1/r_{0}(H,L);H,L) &= \displaystyle \dfrac{3\Omega^{(2)}_{0}(H,L)\, \Omega^{(4)}_{0}(H,L)-(\Omega^{(3)}_{0}(H,L))^{2}}{ 12\sqrt{2} \, (\Omega^{(2)}_{0}(H,L))^{\frac{3}{2}}},
\end{aligned}
\end{equation*}
and
\begin{equation*}
\begin{aligned}
\partial_{H} h (1/r_{0}(H,L);H,L) &= -\dfrac{(\Omega^{(2)}_{0}(H,L))^{\frac{1}{2}}}{\sqrt{2}}\, \partial_H (1/r_0)(H,L),
\\
\partial_{H} h' (1/r_{0}(H,L);H,L) &= \dfrac{3 \partial_H \Omega^{(2)}_{0}(H,L) -2\Omega^{(3)}_{0}(H,L)\, \partial_H (1/r_0)(H,L)}{6\sqrt{2} \, (\Omega^{(2)}_{0}(L))^{\frac{1}{2}}},
\\
\partial_{L} h (1/r_{0}(H,L);H,L) &= -\dfrac{(\Omega^{(2)}_{0}(H,L))^{\frac{1}{2}}}{\sqrt{2}}\, \partial_L (1/r_0)(H,L),
\\
\partial_{L} h' (1/r_{0}(H,L);H,L) &= \dfrac{3 \partial_L \Omega^{(2)}_{0}(H,L) -2\Omega^{(3)}_{0}(H,L)\, \partial_L (1/r_0)(H,L)}{6\sqrt{2} \, (\Omega^{(2)}_{0}(L))^{\frac{1}{2}}}.
\end{aligned}
\end{equation*}
hold true. From the above formulas we deduce that 
\begin{align*}
&3(h''(1/r_{0}(H,L);H,L))^2-h'(1/r_{0}(H,L);H,L)\, h'''(1/r_{0}(H,L);H,L) =
\\
&=\dfrac{5(\Omega^{(3)}_{0}(H,L))^2-3\Omega^{(2)}_{0}(H,L)\, \Omega^{(4)}_{0}(H,L)}{24\Omega^{(2)}_{0}(H,L)},
\\
&h''(1/r_{0}(H,L);H,L)\, \partial_{H} h (1/r_{0}(H,L);H,L) 
\\
&\hspace{50pt}-h'(1/r_{0}(H,L);H,L)\, \partial_{H} h' (1/r_{0}(H,L);H,L)=
\\
&= -\dfrac{\partial_H \Omega^{(2)}_{0}(H,L)}{4},
\\
&h''(1/r_{0}(H,L);H,L)\, \partial_{L} h (1/r_{0}(H,L);H,L) 
\\
&\hspace{50pt}-h'(1/r_{0}(H,L);H,L)\, \partial_{L} h' (1/r_{0}(H,L);H,L)=
\\
&= -\dfrac{\partial_L \Omega^{(2)}_{0}(H,L)}{4}.
\end{align*}
Passing to the limit in \eqref{eq-T4}, with $H=-1/2$, we thus obtain
\begin{equation*}
\lim_{L\to (L_E)^-} P \left(-\dfrac{1}{2},L\right)= \dfrac{2\pi}{(\Omega^{(2)}_{0}(-1/2,L_{E}))^{\frac{1}{2}}},
\end{equation*}
which recalling the definition \eqref{def-omega0e} proves the first relation in the thesis. Analogously, passing to the limit in  \eqref{eq-dermappetempo}, again with $H=-1/2$, we obtain the second formula.
The proof is complete.
\end{proof}

We can now state and prove our final result on the map $\Theta$.

\begin{corollary} \label{cor-mappetempo}
For every $E\in (\sqrt{25/27},1)$, it holds that
\begin{equation} \label{eq-stimemappetempo2}
\displaystyle  \lim_{L\to (L_E)^-} \partial_L \Theta  \left(-\dfrac{1}{2},L\right) 
= \dfrac{-3\pi \left( 4 \sqrt{ \dfrac{L_E^2}{M^2} - 12} + \frac{5}{\sqrt{2}} \dfrac{L_E^2}{M^2}\zeta'\left(\sqrt{2} \dfrac{L_E}{M}\right)\right)}{M\sqrt{\frac{L_E}{M}} \, \left( \dfrac{L_E^2}{M^2} - 12\right)^{\frac{7}{4}}},
\end{equation}
where $\zeta$ is defined in \eqref{eq-defzetaausiliaria}. 
\end{corollary}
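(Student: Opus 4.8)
The plan is to exploit the factorization $\Theta = L P$ recorded in \eqref{eq-thetaP} and then feed in the two asymptotic formulas of Lemma~\ref{prop-mappetempo2}. Differentiating \eqref{eq-thetaP} at $H=-1/2$ gives $\partial_L \Theta(-1/2,L) = P(-1/2,L) + L\,\partial_L P(-1/2,L)$, and since both limits on the right exist and are finite by Lemma~\ref{prop-mappetempo2}, I may pass to the limit termwise to get
\begin{equation*}
\lim_{L \to (L_E)^-} \partial_L \Theta\left(-\tfrac12, L\right) = \frac{2\pi}{(\Omega^{(2)}_{0,E})^{1/2}} + L_E \left[ \frac{\pi}{12}\, \frac{5(\Omega^{(3)}_{0,E})^2 - 3\Omega^{(2)}_{0,E}\Omega^{(4)}_{0,E}}{(\Omega^{(2)}_{0,E})^{7/2}}\, \partial_L \omega_0\left(-\tfrac12, L_E\right) - \pi\, \frac{\partial_L \Omega^{(2)}_{0,E}}{(\Omega^{(2)}_{0,E})^{3/2}} \right].
\end{equation*}
Before invoking the lemma I would check its hypothesis \eqref{eq-ipoappendice}, namely $\Omega^{(2)}_0(H,L)>0$ on $\Lambda_E$: this is immediate, since $\Omega(\cdot;H,L)$ differs from $\widetilde W(\cdot;H,L)=W(1/\,\cdot\,;H,L)$ only by the additive constant $\omega_0$, and $\widetilde W$ inherits the non-degenerate minimum of $W$ at $r_0$ guaranteed by \eqref{eq-r0nondeg}.

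The substance of the proof is then the explicit evaluation, in closed form in terms of $L_E$ and $M$, of the five quantities $\Omega^{(2)}_{0,E},\Omega^{(3)}_{0,E},\Omega^{(4)}_{0,E},\partial_L\omega_0(-1/2,L_E),\partial_L\Omega^{(2)}_{0,E}$ appearing above. For the derivatives of $\Omega$ I would differentiate $\Omega(r;H,L)=W(1/r;H,L)+\omega_0(H,L)$ by the chain rule, writing each $\Omega^{(i)}_0=\Omega^{(i)}(1/r_0;H,L)$ in terms of the derivatives $W^{(j)}(r_0)$ and powers of $r_0$; with $W(r;H,L)=\tfrac12(L^2/r^2-2H)\alpha(r)$ and $\alpha$ as in \eqref{eq-defalpha}, every $W^{(j)}(r_0)$ is a rational function of $r_0,L,M$, and the critical-point relation $-2HMr_0^2-L^2r_0+3ML^2=0$ (already used to establish \eqref{eq-v0maxrif}) can be applied repeatedly to depress the degree in $r_0$. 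For $\partial_L\omega_0$ I would simply differentiate the homogeneity identity \eqref{eq-v0maxrif}: at $H=-1/2$ it reads $\omega_0(-1/2,L)=-\tfrac12\zeta(\sqrt2 L/M)$, whence $\partial_L\omega_0(-1/2,L)=-\tfrac{1}{\sqrt2\,M}\zeta'(\sqrt2 L/M)$, which is exactly the origin of the $\zeta'$-term in the claimed expression.

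Finally I would substitute these values into the displayed limit and simplify. The decisive structural feature is that $L_E$ is precisely the value for which $E^2/2+\omega_0(-1/2,L_E)=0$, so that the radial oscillation degenerates to the circular orbit at $r_0$ (equivalently $r_\pm\to r_0$) and the limits reduce to small-amplitude quantities evaluated at $r_0(-1/2,L_E)$; this endpoint evaluation is what collapses the intermediate rational expressions into the compact factors $(L_E^2/M^2-12)^{7/4}$ and $\sqrt{L_E/M}$ in the denominator and the combination $4\sqrt{L_E^2/M^2-12}+\tfrac{5}{\sqrt2}\tfrac{L_E^2}{M^2}\zeta'(\sqrt2 L_E/M)$ in the numerator. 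The main obstacle I anticipate is entirely computational rather than conceptual: carrying four successive derivatives of the inverted potential $\widetilde W$ through to $\Omega^{(4)}$ and keeping the algebra manageable by systematically eliminating high powers of $r_0$ via the critical-point equation. Existence and finiteness of all limits are already supplied by Lemma~\ref{prop-mappetempo2}, and the non-degeneracy requirement \eqref{eq-ipoappendice} is secured by \eqref{eq-r0nondeg}, so no further analytic input is needed beyond this bookkeeping.
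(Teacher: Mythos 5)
Your proposal is correct and follows essentially the same route as the paper: differentiate the factorization $\Theta = L\,P$ from \eqref{eq-thetaP}, pass to the limit termwise via Lemma~\ref{prop-mappetempo2}, evaluate $\Omega^{(2)}_{0,E}$, $\Omega^{(3)}_{0,E}$, $\Omega^{(4)}_{0,E}$, $\partial_L \Omega^{(2)}_{0,E}$ and $\partial_L \omega_0(-1/2,L_E)$ in closed form (the paper's \eqref{eq-stima444}, obtained exactly as you suggest from the critical-point relation and the identity $\omega_0(-1/2,L)=-\tfrac{1}{2}\,\zeta(\sqrt{2}\,L/M)$ coming from \eqref{eq-v0maxrif}), and simplify. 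The only difference is minor: you verify hypothesis \eqref{eq-ipoappendice} a priori by transferring the non-degeneracy \eqref{eq-r0nondeg} of $W$ at $r_0$ to the inverted potential via the chain rule (which in fact covers all of $\Lambda_E$), whereas the paper confirms it a posteriori from the explicit formula for $\Omega^{(2)}_{0}(-1/2,L)$ in \eqref{eq-stima444}; also, the computational burden you anticipate is lighter than feared, since $\widetilde{W}(\cdot;H,L)$ is a cubic polynomial in the inverted variable, so $\Omega^{(3)}$ is constant and $\Omega^{(4)}\equiv 0$.
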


\begin{proof} 
Recalling \eqref{eq-thetaP}, we have
\begin{equation*}
\displaystyle \lim_{L\to (L_E)^-} \partial_L \Theta  \left(-\dfrac{1}{2},L\right)=\displaystyle \lim_{L\to (L_E)^-} P  \left(-\dfrac{1}{2},L\right)+ \displaystyle L_E \, \lim_{L\to (L_E)^-} \partial_L P \left(-\dfrac{1}{2},L\right).
\end{equation*}
From Lemma~\ref{prop-mappetempo2}, we then infer that 
\begin{equation} \label{eq-stima222}
\displaystyle \lim_{L\to (L_E)^-} \partial_L \Theta  \left(-\dfrac{1}{2},L\right)
= \dfrac{\pi}{12 (\Omega^{(2)}_{0,E})^{\frac{7}{2}}}\, \Theta_E,
\end{equation}
where
\begin{equation} \label{eq-stima333}
\Theta_E=24 (\Omega^{(2)}_{0,E})^3 +L_E \left((5(\Omega^{(3)}_{0,E})^2 
-3\Omega^{(2)}_{0,E}\, \Omega^{(4)}_{0,E})\, \partial_L \omega_0 (-1/2,L_E)
-12 (\Omega^{(2)}_{0,E})^2\, \partial_L \Omega^{(2)}_{0,E}\right).
\end{equation}
Notice that by now we have not checked that \eqref{eq-ipoappendice} holds true, but this will follows from \eqref{eq-stima444}.

Now, let us define
\begin{equation*}
v(x)=\dfrac{1}{2}\, x^2+\dfrac{1}{2}\, x\sqrt{x^2-12}, \quad x\in(4,+\infty),
\end{equation*}
and observe that \eqref{eq-defrpiu} implies that
\begin{equation*}
\dfrac{r_0(-1/2,L)}{M}=v(L/M).
\end{equation*}
On the other hand, from \eqref{eq-v0maxrif} we deduce that
\begin{equation*}
\omega_0(-1/2,L)=-\dfrac{1}{2}\, \zeta \left(\sqrt{2} L/M\right)
\end{equation*}
and then
\begin{equation*}
\partial_L \omega_0(-1/2,L)=-\dfrac{1}{\sqrt{2}}\, \dfrac{1}{M}\,  \zeta' \left(\sqrt{2} L/M\right).
\end{equation*}
Moreover, recalling \eqref{eq-defomegatrasl} and the fact that $\widetilde{W}(r; H,L)=W(1/r;H,L)$, simple computations yield
\begin{align} \label{eq-stima444}
&\Omega^{(2)}_{0}(-1/2,L) = M^2 (L/M) \sqrt{(L/M)^2-12},
\\
&\partial_L \Omega^{(2)}_{0}(-1/2,L) = 2M \frac{(L/M)^2-6}{\sqrt{(L/M)^2-12}},
\\
&\Omega^{(3)}_{0}(-1/2,L) = -6 M^3 (L/M)^2,
\\
&\Omega^{(4)}_{0}(-1/2,L) = 0.
\end{align}
Replacing in \eqref{eq-stima333}, we deduce
\begin{align}
\Theta_E 
&= 12 M^3 L_E^3 \biggl( 2 ((L_E/M)^2-12)^{3/2} - \frac{15}{\sqrt{2}} (L_E/M)^2 \zeta'(\sqrt{2} L_E/M) 
\\
&\quad - 2 ((L_E/M)^2-6)((L_E/M)^2-12)^{1/2} \biggr)
\label{eq-contothetaE}
\\ &=-36 M^3 L_E^3 \left( 4 ((L_E/M)^2-12)^{1/2} + \frac{5}{\sqrt{2}} (L_E/M)^2 \zeta'(\sqrt{2}L_E/M)\right).
\end{align}
From \eqref{eq-stima222}, \eqref{eq-stima444} and \eqref{eq-contothetaE} we obtain the thesis.
\end{proof}

We are now in a position to prove the final result of this section.

\begin{proposition} \label{teo-schwnonperturbato}
Let $E\in (\sqrt{25/27},1)$. Then, there exists a dense set $\mathcal{L}_E\subset (4M,L_E)$ such that for every $L\in \mathcal{L}_E$ there exists an equatorial timelike non-circular closed geodesics of angular momentum $L$ satisfying the non-degeneracy condition \eqref{eq-nondeg-Schw}.
\end{proposition}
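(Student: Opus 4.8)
The plan is to combine the nonvanishing of the boundary limit furnished by Corollary~\ref{cor-mappetempo} with the analyticity of $\Theta(-1/2,\cdot)$, and then to use monotonicity to produce, densely, angular momenta at which the apsidal angle is a rational multiple of $2\pi$ while at the same time being a regular value of $\Theta(-1/2,\cdot)$. Everything delicate has already been done in the preceding lemmas, so the remaining argument is mostly soft.

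First I would read off from \eqref{eq-stimemappetempo2} that the limit is genuinely nonzero. Since $E\in(\sqrt{25/27},1)$, the value $L_E=\frac{M}{\sqrt2}\,\zeta^{-1}(E^2)$ satisfies $L_E/M>4$, because $\zeta^{-1}(E^2)>4\sqrt2$ by \eqref{eq-propzetaausiliaria}; hence $L_E^2/M^2-12>0$ and every radical and power occurring in \eqref{eq-stimemappetempo2} is a well-defined positive real. Using $\zeta'>0$ on $(4\sqrt2,+\infty)$, again from \eqref{eq-propzetaausiliaria}, both summands inside the parenthesis in the numerator are strictly positive; together with the overall factor $-3\pi$ and the strictly positive denominator this forces
\[
\lim_{L\to(L_E)^-}\partial_L\Theta\Bigl(-\tfrac12,L\Bigr)<0,
\]
and in particular it is nonzero.

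Next, recall from the discussion following Proposition~\ref{lem-schworbitechiuse} that $\Theta(-1/2,\cdot)$ is analytic on $(4M,L_E)$; hence so is $\partial_L\Theta(-1/2,\cdot)$. The nonzero limit at the right endpoint shows that $\partial_L\Theta(-1/2,\cdot)$ is not identically zero, so by analyticity its zero set $N\subset(4M,L_E)$ is discrete. Consequently the open set $A:=(4M,L_E)\setminus N$, on which the non-degeneracy condition \eqref{eq-nondeg-Schw} holds, is dense in $(4M,L_E)$.

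Finally I would manufacture closed orbits inside $A$. On each connected component $J$ of $A$ the derivative $\partial_L\Theta(-1/2,\cdot)$ is continuous and nonvanishing, hence of constant sign, so $\Theta(-1/2,\cdot)$ is strictly monotone on $J$ and maps it homeomorphically onto an open interval $\Theta(-1/2,J)\subset(0,+\infty)$ (positivity since $\Theta>0$). Because the rational multiples of $2\pi$ are dense in $\mathbb{R}$, the preimage under this homeomorphism of $2\pi\mathbb{Q}\cap\Theta(-1/2,J)$ is dense in $J$; writing each such value as $2\pi k/n$ in lowest terms gives a pair of coprime positive integers, so by \eqref{eq-tori} the corresponding solution is a closed geodesic, which is non-circular (since $r_-<r_0<r_+$ on $\Lambda_E$) and timelike (since $H=-1/2$). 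Letting $\mathcal{L}_E$ be the union over all components $J$ of these preimages yields a set on which both \eqref{eq-nondeg-Schw} and \eqref{eq-tori} hold; it is dense in $A$, and $A$ is dense in $(4M,L_E)$, whence $\mathcal{L}_E$ is dense in $(4M,L_E)$. The only genuinely delicate point is the first step, namely extracting the sign of the intricate limit in \eqref{eq-stimemappetempo2}; once that is settled, the conclusion follows by the interplay of analyticity, monotonicity on the components of $A$, and the density of $2\pi\mathbb{Q}$.
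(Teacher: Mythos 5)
Your proposal is correct and follows essentially the same route as the paper's proof: extract the nonvanishing (indeed, as you correctly compute, negative) boundary limit from Corollary~\ref{cor-mappetempo}, use analyticity of $\Theta(-1/2,\cdot)$ on $(4M,L_E)$ to make the zero set of $\partial_L\Theta(-1/2,\cdot)$ discrete, and then obtain a dense set of angular momenta where $\Theta(-1/2,\cdot)\in 2\pi\mathbb{Q}^+$ and \eqref{eq-nondeg-Schw} holds simultaneously. The only difference is cosmetic and lies in the final density step: where the paper argues by contradiction that the complement of the set $\{L\colon \Theta(-1/2,L)\in 2\pi\mathbb{Q}^+\}$ would otherwise contain an interval on which $\Theta(-1/2,\cdot)$ is constant (contradicting discreteness of the critical set), you pull back the dense set $2\pi\mathbb{Q}\cap\Theta(-1/2,J)$ through the strictly monotone restriction of $\Theta(-1/2,\cdot)$ to each component $J$ of the complement of the zero set --- a slightly more constructive packaging of the same idea.
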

	
\begin{proof} 
Let us fix $E\in (\sqrt{25/27},1)$ and consider the analytic function $\Theta (-1/2,\cdot)\colon$ $(4M, L_E)\to (0,+\infty)$.
From \eqref{eq-stimemappetempo2} and the fact that $\zeta'$ is positive, cf.~\eqref{eq-propzetaausiliaria}, it holds that
\begin{equation*}
\lim_{L\to (L_E)^-} \partial_L \Theta  \left(-\dfrac{1}{2},L\right) \neq 0,
\end{equation*}
and so the map $\Theta (-1/2,\cdot)\colon (4M, L_E)\to (0,+\infty)$ is not constant.
As a consequence, the set
\begin{equation*}
\mathcal{F}_E=\{L\in (4M, L_E)\colon \partial_L \Theta (-1/2,L)=0\}
\end{equation*}
is discrete and from this we easily infer that the set
\begin{equation*}
\mathcal{G}_E=\{L\in (4M, L_E)\colon \Theta (-1/2,L)\in 2\pi \mathbb{Q}^+\}.
\end{equation*}
is dense. Indeed, otherwise the complementary set
\begin{equation*}
(4M,L_E) \setminus \mathcal{G}_E=\{L\in (4M, L_E)\colon \Theta (-1/2,L)\in (0,+\infty) \setminus 2\pi \mathbb{Q}^+\}
\end{equation*}
would contain an open interval: clearly, the only possibility for this to happen is that $\Theta (-1/2,\cdot) \equiv \Theta^*$ on this interval, for some $\Theta^* \in  \mathopen{]}0,+\infty\mathclose{[} \setminus 2\pi \mathbb{Q}^+$, thus contradicting the fact that $\mathcal{F}_E$ is discrete.

So, let us define the set
\begin{equation*}
\mathcal{L}_E=\mathcal{G}_E\setminus \mathcal{F}_E.
\end{equation*}
Since $\mathcal{G}_E$ is dense and $\mathcal{F}_E$ is discrete, with a similar argument as before it is easily seen that $\mathcal{L}_E$ is dense as well. 
Moreover, by construction, for every $L\in \mathcal{L}_E$ we have both
\begin{equation*}
\Theta \left(-\dfrac{1}{2},L\right)\in 2\pi \mathbb{Q}^+ \quad \mbox{ and } \quad \partial_L \Theta  \left(-\dfrac{1}{2},L\right) \neq 0,
\end{equation*}
so that the corresponding timelike geodesic is closed, cf.~\eqref{eq-tori}, and satisfies the non-degeneracy condition \eqref{eq-nondeg-Schw}.
The proof is thus concluded.
\end{proof}

\subsection{The perturbed problem: bifurcation of equatorial timelike closed geodesics}\label{section-4.3}

Let us consider again the $4$-dimensional space-time endowed with the spherical coordinates \eqref{sph-coord}.
Given a \textit{stationary} and \textit{equatorial} metric $l$, i.e.~a metric of the form
\begin{equation*}
l_{11}(r,\phi)\, \mathrm{d}r^2 + 2l_{13}(r,\phi)\, \mathrm{d}r\, \mathrm{d}\phi+l_{33}(r,\phi) \, \mathrm{d}\phi^2,
\end{equation*}
where $l_{\mu\nu}\in \mathcal{C}^\infty ((0,+\infty)\times \mathbb{R})$, $\mu,\nu=1, 3$, is $2\pi$-periodic in the $\phi$-variable,
we consider the perturbed Schwartzschild metric $g^{\varepsilon} = g + \varepsilon l$, namely
\begin{equation}\label{eq-schwpert}
\begin{aligned}
\mathrm{d}s^2
=&-\alpha(r)\, \mathrm{d}\tau^2+\alpha(r)^{-1}\, \mathrm{d}r^2+r^2\, \mathrm{d}\theta^2+r^2\sin^2\theta \, \mathrm{d}\phi^2\\
&+\varepsilon \left(l_{11}(r,\phi)\, \mathrm{d}r^2+2l_{13}(r,\phi)\, \mathrm{d}r\, \mathrm{d}\phi+l_{33}(r,\phi) \, \mathrm{d}\phi^2\right),
\quad \varepsilon \in \mathbb{R}.
\end{aligned}
\end{equation}
The associated Lagrangian is
\begin{equation*}
L_{\varepsilon} (q,\dot{q})
= L_0 (q,\dot{q})+\dfrac{1}{2}\, \varepsilon \left(l_{11}(r,\phi)\, \dot{r}^2+2l_{13}(r,\phi)\, \dot{r}\, \dot{\phi}+l_{33}(r,\phi) \, \dot{\phi}^2\right),
\end{equation*}
where $L_{0}$ is as in \eqref{eq-lagrunpert} and again the derivative is taken with respect to the proper time $t$ of the particle. 
Accordingly, the canonical momenta are
\begin{equation*}
\begin{cases}
\, p_\tau =-\alpha(r)\, \dot{\tau},
\\
\, p_r =\alpha(r)^{-1}\, \dot{r}+\varepsilon (l_{11}(r,\phi)\, \dot{r}+l_{13}(r,\phi)\, \dot{\phi}),
\\
\, p_\theta =r^2\, \dot{\theta},
\\
\, p_\phi =r^2 \sin^2 \theta\, \dot{\phi}+\varepsilon (l_{13}(r,\phi)\, \dot{r}+l_{33}(r,\phi)\, \dot{\phi}).
\end{cases}
\end{equation*}
This system can be solved with respect to $(\dot{\tau},\dot{r},\dot{\theta},\dot{\phi})$ for small values of $\varepsilon$. More precisely, there exist $\hat{\varepsilon}>0$ and two functions $\eta_1, \eta_2 \in \mathcal{C}^\infty ((-\hat{\varepsilon},\hat{\varepsilon})\times (0,+\infty)\times \mathbb{R}\times \mathbb{R}^2\times [0,1])$, $2\pi$-periodic in $\phi$, such that for every $\varepsilon\in( -\hat{\varepsilon},\hat{\varepsilon})$ 
\begin{equation} \label{eq-momentiinv}
\begin{cases}
\, \dot{\tau} = -\alpha(r)^{-1}\, p_\tau,
\vspace{2pt}\\
\, \dot{r} =\alpha(r)\, p_r+\varepsilon \, \eta_1 (\varepsilon,r,\phi,p_r,p_\phi,\sin^2 \theta),
\vspace{2pt}\\
\, \dot{\theta} = \dfrac{1}{r^2}\, p_\theta,
\vspace{2pt}\\
\, \dot{\phi} =\dfrac{1}{r^2 \sin^2 \theta}\, p_{\phi}+\varepsilon \, \eta_2 (\varepsilon,r,\phi,p_r,p_\phi,\sin^2 \theta).
\end{cases}
\end{equation}
By Legendre transformation, the Hamiltonian $H_{\varepsilon}$ conjugate to $L_{\varepsilon}$ is
\begin{equation*}
H_{\varepsilon} (q,p) = \langle \dot{q}, p \rangle - L_{\varepsilon}(q,\dot{q})
\end{equation*}
where $\dot{q}=\dot{q}(\varepsilon,p, r,\phi,\sin^2 \theta)$ is given by \eqref{eq-momentiinv}, and a simple computation finally yields
\begin{equation*}
H_{\varepsilon} (q,p)= H_0(q,p) + \varepsilon R(\varepsilon,r,\phi,p_r,p_\phi,\sin^2 \theta),
\end{equation*}
with $H_0$ as in \eqref{eq-hamrunpert} and $R\in \mathcal{C}^\infty((-\hat \varepsilon,\hat \varepsilon)\times (0,+\infty)\times \mathbb{R}\times \mathbb{R}^2\times [0,1])$ and $2\pi$-periodic in $\phi$.

The Hamiltonian system corresponding to $H_{\varepsilon}$ is then 
\begin{equation} \label{eq-sisthamschw}
\begin{cases}
\, \dot{\tau} = \displaystyle -\alpha(r)^{-1}\, p_\tau,
\\
\, \dot{p_\tau} =  0,
\\
\, \dot{r} = \alpha(r)\, p_r+\varepsilon \, \partial_{p_r}R (\varepsilon,r,\theta, \phi,p_r,p_\phi,\sin^2 \theta),
\\
\, \dot{p_r} = -\dfrac{1}{2}\, \alpha'(r)\, \alpha(r)^{-2} \, p_\tau^2-\dfrac{1}{2}\, \alpha'(r)\, p_r^2 +\dfrac{1}{r^3}\, p_\theta^2+\dfrac{1}{r^3 \sin^2 \theta}\, p_\phi^2
\\
\, \qquad -\varepsilon\, \partial_{r}R(\varepsilon,r,\theta,\phi,p_r,p_\phi,\sin^2 \theta),
\\
\, \dot{\theta} = \dfrac{1}{r^2}\, p_\theta,
\\
\, \dot{p_\theta} = \dfrac{1}{r^2 \sin^3 \theta}\, \cos \theta \, p_\phi^2
- 2 \varepsilon^2  \partial_{\sin^{2} \theta} R(\varepsilon,r,\phi,p_r,p_\phi,\sin^2 \theta) \sin \theta \cos \theta,
\vspace{3pt}\\
\, \dot{\phi} = \dfrac{1}{r^2 \sin^2 \theta}\, p_{\phi}+\varepsilon \, \partial_{p_\phi} R (\varepsilon,r,\theta,\phi,p_r,p_\phi,\sin^2 \theta),
\\
\, \dot{p_\phi} =  - \varepsilon \partial_{\phi}R (\varepsilon,r,\theta, \phi,p_r,p_\phi,\sin^2 \theta),
\end{cases}
\end{equation}
From \eqref{eq-sisthamschw}, we immediately deduce that, as in the unperturbed case, $p_\tau$ is a first integral and $(\theta,p_\theta) \equiv (\pi/2,0)$ satisfies the pair of equations for $(\dot{\theta},\dot{p_\theta})$, for every $\varepsilon \in [-\hat \varepsilon,\hat \varepsilon]$. Still denoting by $E\in \mathbb{R}$ the value of the first integral $p_\tau$, we can consider again the $2$-degree of freedom system associated with the reduced Hamiltonian
\begin{equation*}
\mathcal{H}_{\varepsilon} (r,\phi,p_r,p_\phi)= \mathcal{H}_0 (r,\phi,p_r,p_\phi) + \varepsilon  R(\varepsilon,r,\phi,p_r,p_\phi,1),
\end{equation*}
where $\mathcal{H}_0$ is as in \eqref{eq-hamrunpertschwridotta0}.
Finally, the invariance of the mass of the particle in the case of timelike geodesics gives the isoenergetic condition
\begin{equation*}
\mathcal{H}_{\varepsilon} (r,\phi,p_r,p_\phi)=-\dfrac{1}{2}.
\end{equation*}
Recalling Proposition~\ref{teo-schwnonperturbato}, from the above discussion and Theorem~\ref{teo-main}, we immediately obtain the following result.

\begin{theorem} \label{teo-schwperturbato}
Let $E\in (\sqrt{25/27},1)$,  $L^*\in \mathcal{L}_E$ and $(n,k)$ coprime positive integers such that 
\begin{equation*}
\Theta \left(-\dfrac{1}{2},L^*\right)=2\pi \dfrac{k}{n}.
\end{equation*}
Then, there exist $\varepsilon^* > 0$ and $\delta^* > 0$ such that,
for every pair of positive coprime integers $(p,q)$ satisfying
\begin{equation*}
\left\vert \frac{p}{q} - \frac{k}{n}\right \vert < \delta^*,
\end{equation*}
and for every $\varepsilon$ such that $\vert\varepsilon \vert < \varepsilon^*$, there are two distinct equatorial timelike closed geodesics for the metric \eqref{eq-schwpert}. Moreover, the corresponding radial and angular components satisfy the qualitative properties given in Theorem~\ref{teo-main}.
\end{theorem}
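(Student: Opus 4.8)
The plan is to recognize the reduced perturbed system as an instance of \eqref{eq-hamper}--\eqref{eq-hamunpertastratta} and then invoke Theorem~\ref{teo-main} directly, since all the substantial analytic work has already been carried out in Proposition~\ref{teo-schwnonperturbato}. First I would record that, after the Legendre transformation leading to $H_{\varepsilon} = H_0 + \varepsilon R$, the full Hamiltonian system \eqref{eq-sisthamschw} admits $p_\tau \equiv E$ as a first integral and keeps the equatorial slice $(\theta,p_\theta) \equiv (\pi/2,0)$ invariant: indeed, on this slice the equation for $\dot{p_\theta}$ reduces to $\dot{p_\theta} = -2\varepsilon^2\,\partial_{\sin^2\theta}R\,\sin\theta\cos\theta = 0$ (as $\cos(\pi/2)=0$), while $\dot\theta = p_\theta/r^2 = 0$, so a solution starting on the slice stays there for every $\varepsilon \in [-\hat\varepsilon,\hat\varepsilon]$. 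The remaining $(r,\phi,p_r,p_\phi)$ dynamics is governed by the reduced Hamiltonian $\mathcal{H}_{\varepsilon} = \mathcal{H}_0 + \varepsilon R(\varepsilon,r,\phi,p_r,p_\phi,1)$, which is of the form \eqref{eq-hamper} with $\mathcal{H}_0$ as in \eqref{eq-hamrunpertschwridotta0} (hence of the type \eqref{eq-hamunpertastratta} with $\alpha(r)=1-2M/r$, $V(r)=E^2/2\alpha(r)$, and $\phi$ playing the role of $\vartheta$), the perturbation $\widetilde{\mathcal{H}} = R(\varepsilon,\cdot,\cdot,\cdot,\cdot,1)$ being smooth and $2\pi$-periodic in $\phi$.

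Next I would verify the hypotheses of Theorem~\ref{teo-main} at the point $(H^*,L^*) = (-1/2,L^*)$. By Proposition~\ref{lem-schworbitechiuse}, for $E \in (\sqrt{25/27},1)$ hypothesis \ref{hp-star} holds on $\Lambda = \Lambda_E$, and the isoenergetic condition \eqref{eq-schwisoen1} fixes precisely $H^* = -1/2$, with $(-1/2,L^*) \in \Lambda_E$ for $L^* \in (4M,L_E)$, cf.~\eqref{eq-defLE0}. For $L^* \in \mathcal{L}_E$, Proposition~\ref{teo-schwnonperturbato} supplies exactly the two remaining conditions: the resonance relation \eqref{eq-tori}, i.e.~$\Theta(-1/2,L^*) = 2\pi k/n$ for the prescribed coprime $(n,k)$, and the non-degeneracy \eqref{eq-nondeg-Schw}, which is nothing but \eqref{eq-nondeg} in the present notation.

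With all hypotheses in place, Theorem~\ref{teo-main} yields $\varepsilon^*,\delta^* > 0$ such that, for each admissible pair $(p,q)$ and each $|\varepsilon| < \varepsilon^*$, there exist two distinct periodic solutions of the reduced system with energy $\mathcal{H}_{\varepsilon} = -1/2$, enjoying the stated winding properties $(i)$ and $(ii)$. Lifting back by setting $\theta \equiv \pi/2$, $p_\theta \equiv 0$, $p_\tau \equiv E$ and integrating the decoupled equation $\dot\tau = -\alpha(r)^{-1}E$ afterwards, each such solution becomes an equatorial timelike closed geodesic of the perturbed metric \eqref{eq-schwpert}, its timelike character being guaranteed by the fixed value $\mathcal{H}_{\varepsilon} = -1/2$ of the energy.

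I expect no genuine obstacle at this stage: the only points requiring care are the invariance of the equatorial slice under the perturbed flow---which hinges on the perturbation being \emph{equatorial}, so that the $\theta$-dependence of $R$ enters only through $\sin^2\theta$ and its $\theta$-derivative vanishes at $\pi/2$---and the bookkeeping ensuring that the reduced periodic orbits are honest geodesics of the original four-dimensional system. The hard analytic content, namely the limit computation of Corollary~\ref{cor-mappetempo} and the ensuing density argument, is entirely absorbed into Proposition~\ref{teo-schwnonperturbato}, so the present theorem is indeed an immediate consequence.
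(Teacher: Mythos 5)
Your proposal is correct and follows exactly the paper's own route: the paper proves Theorem~\ref{teo-schwperturbato} precisely by combining the reduction carried out at the start of Section~\ref{section-4.3} (invariance of the equatorial slice $(\theta,p_\theta)\equiv(\pi/2,0)$, the first integral $p_\tau\equiv E$, and the reduced Hamiltonian $\mathcal{H}_{\varepsilon}=\mathcal{H}_0+\varepsilon R(\varepsilon,\cdot,\cdot,\cdot,\cdot,1)$ of the form \eqref{eq-hamper}) with Proposition~\ref{teo-schwnonperturbato} and Theorem~\ref{teo-main}. The extra details you spell out---$\dot{p_\theta}=0$ on the slice since $\cos(\pi/2)=0$, and the timelike character following from $\mathcal{H}_{\varepsilon}=-1/2$---are exactly the points the paper treats in its preceding discussion, so no gap remains.
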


\begin{remark} \label{rem-massless}
The dynamics of lightlike geodesics, which correspond to massless particles, is completely different. Indeed, in \cite{Xue-21} it is proved that the only bounded motions in the Schwarzschild spacetime are bound spherical orbits lying in a sphere corresponding to a maximum point of the effective potential (the so-called photon shell). These orbits are degenerate in the sense that the corresponding Hamiltonian in action-angles variables does not satisfy the non-degeneracy condition \eqref{eq-nondeg-Schw} (see \cite[Section~3.2]{Xue-21}). Hence, no general results can be a priori applied to ensure bifurcation of closed orbits for the perturbed metric.
\hfill$\lhd$
\end{remark}

\bibliographystyle{elsart-num-sort}
\bibliography{BoDaFe-biblio}

\begin{thebibliography}{10}
\expandafter\ifx\csname url\endcsname\relax
  \def\url#1{\texttt{#1}}\fi
\expandafter\ifx\csname urlprefix\endcsname\relax\def\urlprefix{URL }\fi

\bibitem{AmCo-93}
A.~Ambrosetti, V.~Coti~Zelati, Periodic solutions of singular {L}agrangian
  systems, vol.~10 of Progr. Nonlinear Differential Equations Appl.,
  Birkh\"{a}user, Boston, MA, 1993.

\bibitem{AmCoEk-87}
A.~Ambrosetti, V.~Coti~Zelati, I.~Ekeland, Symmetry breaking in {H}amiltonian
  systems, J. Differential Equations 67 (1987) 165--184.

\bibitem{Ar-89}
V.~I. Arnol'd, Mathematical methods of classical mechanics, vol.~60 of Graduate
  Texts in Mathematics, 2nd ed., Springer-Verlag, New York, 1989.

\bibitem{Be-notes}
G.~Benettin, {A}ppunti per il corso di {M}eccanica {A}nalitica, {L}ecture notes
  (2017-2018).
\newline\urlprefix\url{https://www.math.unipd.it/~benettin/links-MA/ma-17_10_25.pdf}

\bibitem{BeKa-87}
D.~Bernstein, A.~Katok, Birkhoff periodic orbits for small perturbations of
  completely integrable {H}amiltonian systems with convex {H}amiltonians,
  Invent. Math. 88 (1987) 225--241.

\bibitem{BiLe-34}
G.~D. Birkhoff, D.~C. Lewis, On the periodic motions near a given periodic
  motion of a dynamical system, Ann. Mat. Pura Appl. 12 (1934) 117--133.

\bibitem{BDF2}
A.~Boscaggin, W.~Dambrosio, G.~Feltrin, Periodic perturbations of central force
  problems and an application to a restricted 3-body problem, arXiv:2110.11635
  (2021).

\bibitem{BoDaFe-21}
A.~Boscaggin, W.~Dambrosio, G.~Feltrin, Periodic solutions to a perturbed
  relativistic {K}epler problem, SIAM J. Math. Anal. 53 (2021) 5813--5834.

\bibitem{BDF4}
A.~Boscaggin, W.~Dambrosio, G.~Feltrin, Prescribed energy periodic solutions of
  {K}epler problems with relativistic corrections, to appear in ``Topological
  Methods for Delay and Ordinary Differential Equations with Applications to
  Continuum Mechanics'', Advances in Continuum Mechanics of Springer /
  Birkhäuser. arXiv:2303.00336 (2023).

\bibitem{BoOr-14}
A.~Boscaggin, R.~Ortega, Monotone twist maps and periodic solutions of systems
  of {D}uffing type, Math. Proc. Cambridge Philos. Soc. 157 (2014) 279--296.

\bibitem{BoOrZh-19}
A.~Boscaggin, R.~Ortega, L.~Zhao, Periodic solutions and regularization of a
  {K}epler problem with time-dependent perturbation, Trans. Amer. Math. Soc.
  372 (2019) 677--703.

\bibitem{Bo-80}
M.~Bottkol, Bifurcation of periodic orbits on manifolds and {H}amiltonian
  systems, J. Differential Equations 37 (1980) 12--22.

\bibitem{BrHu-91}
H.~W. Broer, G.~B. Huitema, A proof of the isoenergetic {KAM}-theorem from the
  ``ordinary'' one, J. Differential Equations 90 (1991) 52--60.

\bibitem{Ca-14}
R.~Castelli, A study of the apsidal angle and a proof of monotonicity in the
  logarithmic potential case, J. Math. Anal. Appl. 413 (2014) 727--751.

\bibitem{Chen-PhD}
W.-F. Chen, {B}irkhoff periodic orbits, {A}ubry--{M}ather sets, minimal
  geodesics and {L}yapunov exponents, {P}h.{D}. thesis, California Institute of
  Technology, 1993.

\bibitem{DeTe-23}
I.~De~Blasi, S.~Terracini, On some refraction billiards, Discrete Contin. Dyn.
  Syst. 43 (2023) 1269--1318.

\bibitem{Di-82}
W.~Y. Ding, Fixed points of twist mappings and periodic solutions of ordinary
  differential equations, Acta Math. Sinica 25 (1982) 227--235.

\bibitem{Du-14}
H.~S. Dumas, The {KAM} story. {A} friendly introduction to the content,
  history, and significance of classical Kolmogorov-Arnold-Moser theory, World
  Scientific Publishing Co. Pte. Ltd., Hackensack, NJ, 2014.

\bibitem{FoGaGi-16}
A.~Fonda, M.~Garrione, P.~Gidoni, Periodic perturbations of {H}amiltonian
  systems, Adv. Nonlinear Anal. 5 (2016) 367--382.

\bibitem{FoSaZa-12}
A.~Fonda, M.~Sabatini, F.~Zanolin, Periodic solutions of perturbed
  {H}amiltonian systems in the plane by the use of the
  {P}oincar\'{e}-{B}irkhoff theorem, Topol. Methods Nonlinear Anal. 40 (2012)
  29--52.

\bibitem{FoUr-17}
A.~Fonda, A.~J. Ure\~{n}a, A higher dimensional {P}oincar\'{e}-{B}irkhoff
  theorem for {H}amiltonian flows, Ann. Inst. H. Poincar\'{e} Anal. Non
  Lin\'{e}aire 34 (2017) 679--698.

\bibitem{Fr-88}
J.~Franks, Generalizations of the {P}oincar\'{e}-{B}irkhoff theorem, Ann. of
  Math. 128 (1988) 139--151.

\bibitem{Go-01}
C.~Gol\'{e}, Symplectic twist maps. {G}lobal variational techniques, vol.~18 of
  Advanced Series in Nonlinear Dynamics, World Scientific Publishing Co., Inc.,
  River Edge, NJ, 2001.

\bibitem{Ha-book}
J.~K. Hale, Ordinary differential equations., 2nd ed., Robert E. Krieger
  Publishing Co., Inc., Huntington, NY, 1980.

\bibitem{He-83}
M.~R. Herman, Sur les courbes invariantes par les diff\'{e}omorphismes de
  l'anneau. {V}ol. 1, vol. 103-104 of Ast\'{e}risque, Soci\'{e}t\'{e}
  Math\'{e}matique de France, Paris, 1983.

\bibitem{He-86}
M.~R. Herman, Sur les courbes invariantes par les diff\'{e}omorphismes de
  l'anneau. {V}ol. 2, Ast\'{e}risque (1986) No.144, 248 pp.

\bibitem{KrPa-book}
S.~G. Krantz, H.~R. Parks, The implicit function theorem. History, theory, and
  applications, Birkh\"{a}user Boston, Inc., Boston, MA, 2002.

\bibitem{KuOr-13}
M.~Kunze, R.~Ortega, Twist mappings with non-periodic angles, in: Stability and
  bifurcation theory for non-autonomous differential equations, vol. 2065 of
  Lecture Notes in Math., Springer, Heidelberg, 2013, pp. 265--300.

\bibitem{LCWa-10}
P.~Le~Calvez, J.~Wang, Some remarks on the {P}oincar\'{e}-{B}irkhoff theorem,
  Proc. Amer. Math. Soc. 138 (2010) 703--715.

\bibitem{LeCi-28}
T.~Levi-Civita, Fondamenti di Meccanica Relativistica, Zanichelli, Bologna,
  1928.

\bibitem{MTW-73}
C.~W. Misner, K.~S. Thorne, J.~A. Wheeler, Gravitation., W. H. Freeman and Co.,
  San Francisco, Calif., 1973.

\bibitem{Mo-92}
R.~Moeckel, A nonintegrable model in general relativity, Comm. Math. Phys. 150
  (1992) 415--430.

\bibitem{Mo-62}
J.~Moser, On invariant curves of area-preserving mappings of an annulus, Nachr.
  Akad. Wiss. G\"{o}ttingen Math.-Phys. Kl. II 1962 (1962) 1--20.

\bibitem{Or-96}
R.~Ortega, Asymmetric oscillators and twist mappings, J. London Math. Soc. 53
  (1996) 325--342.

\bibitem{Or-01}
R.~Ortega, Twist mappings, invariant curves and periodic differential
  equations, in: Nonlinear analysis and its applications to differential
  equations ({L}isbon, 1998), vol.~43 of Progr. Nonlinear Differential
  Equations Appl., Birkh\"{a}user Boston, Boston, MA, 2001, pp. 85--112.

\bibitem{OrRo-19}
R.~Ortega, D.~Rojas, A proof of {B}ertrand's theorem using the theory of
  isochronous potentials, J. Dynam. Differential Equations 31 (2019)
  2017--2028.

\bibitem{Po-1892}
H.~Poincaré, Les méthodes nouvelles de la méchanique céleste,
  Gauthier-Villars et fils, Paris, 1892.

\bibitem{Re-97}
C.~Rebelo, A note on the {P}oincar\'{e}-{B}irkhoff fixed point theorem and
  periodic solutions of planar systems, Nonlinear Anal. 29 (1997) 291--311.

\bibitem{Ro-18}
D.~Rojas, The monotonicity of the apsidal angle using the theory of potential
  oscillators, Qual. Theory Dyn. Syst. 17 (2018) 631--635.

\bibitem{Sc-84}
B.~Schutz, A first course in general relativity, Cambridge University Press,
  1984.

\bibitem{We-73}
A.~Weinstein, Normal modes for nonlinear {H}amiltonian systems, Invent. Math.
  20 (1973) 47--57.

\bibitem{We-78}
A.~Weinstein, Bifurcations and {H}amilton's principle, Math. Z. 159 (1978)
  235--248.

\bibitem{Xue-pp}
J.~Xue, Arnold diffusion and geodesic dynamics of blackholes, arXiv:2012.03047
  (2020).

\bibitem{Xue-21}
J.~Xue, K{AM} and geodesic dynamics of blackholes, Ann. Appl. Math. 37 (2021)
  291--336.

\end{thebibliography}

\end{document}